\newtheorem{mainthm}{Theorem}
\newtheorem{theorem}{Theorem}[section]
\newtheorem{definition}[theorem]{Definition}
\newtheorem{lemma}[theorem]{Lemma}
\newtheorem{proposition}[theorem]{Proposition}
\newtheorem{corollary}[theorem]{Corollary}
\newtheorem{remark}[theorem]{Remark}
\newtheorem{examplecore}[theorem]{Example}}
\newcommand{\colim}{\ensuremath{\operatornamewithlimits{colim}}}
\newcommand{\Stab}{\ensuremath{\operatorname{Stab}}}
\newcommand{\Aut}{\ensuremath{\operatorname{Aut}}}
\newcommand{\End}{\ensuremath{\operatorname{End}}}
\newcommand{\Pic}{\ensuremath{\operatorname{Pic}}}
\newcommand{\op}{\operatorname}
\begin{document}

\title[Parabolic subcomplexes and homology of $\op{SL}_2$]{Homology of
  $\op{SL}_2$ over function fields I: \\ parabolic subcomplexes}  
\author{Matthias Wendt}
\date{April 2014}

\thanks{This work has been partially supported by the
  Alexander-von-Humboldt-Stiftung.}
\address{Matthias Wendt, Fakult\"at Mathematik,
Universit\"at Duisburg-Essen, Thea-Leymann-Strasse 9, 45127, Essen,
Germany} 
\email{matthias.wendt@uni-due.de}

\subjclass[2010]{20G10, 20E42}
\keywords{cohomology, linear groups, vector bundles}

\begin{abstract}
The present paper studies the homology of the groups $\op{SL}_2(k[C])$
and $\op{GL}_2(k[C])$, where
$C=\overline{C}\setminus\{P_1,\dots,P_s\}$ is a smooth affine curve
over an algebraically closed field $k$. 
It is well-known that these groups act on a product of trees and the
quotients can be described in terms of certain equivalence classes of
rank two vector bundles on the curve $\overline{C}$. There is a
natural subcomplex consisting of cells with non-trivial isotropy
group. The paper provides explicit formulas for the equivariant
homology of this ``parabolic subcomplex''. 
These formulas also describe homology of $\op{SL}_2(k[C])$ above
degree $s$, generalizing a result of Suslin for the case $s=1$.
\end{abstract}

\maketitle
\setcounter{tocdepth}{1}
\tableofcontents

\section{Introduction}
\label{sec:intro}

This paper is the first in a series of papers studying the homology of
rank one linear groups over function rings and function fields of
curves, mostly over algebraically closed fields. Of course, the groups
$\op{SL}_2$ and $\op{GL}_2$,  their structure, their homology and
representation theory, have been subject to a lot of research in
number-theoretic situations. However, surprisingly little information
is available  on the structure of $\op{SL}_2$ in situations where the
usual, more analytic, methods fail - for curves over infinite base
fields.   

That being said, the present paper still follows the standard path to
computation of homology of linear groups over function rings of
curves; the group $\op{SL}_2(k[C])$ acts on a product of trees, the
building $\mathfrak{X}_C$, and the isotropy spectral sequence
associated to this action can be used to obtain information on group
homology of $\op{SL}_2(k[C])$. In general, understanding the structure
of the quotient $\op{SL}_2(k[C])\backslash\mathfrak{X}_C$ is rather
difficult, and this is one of the main reasons for the lack of
group homology computations. There is, nevertheless, one part of group
homology that is easier to understand: taking inspiration from the 
number-theoretic situation, we consider a subcomplex of the building,
called \emph{parabolic subcomplex} $\mathfrak{P}_C$,
cf. \prettyref{def:pc}, consisting of cells with non-unipotent
stabilizer. The equivariant homology of this subcomplex is 
a function field analogue of Farrell-Tate homology, and sits in a long
exact sequence, cf. \prettyref{prop:split}
$$
\cdots\to\op{H}_\bullet^{\op{SL}_2(k[C])}(\mathfrak{P}_C)\to
\op{H}_\bullet(\op{SL}_2(k[C])) \to
\op{H}^{\op{SL}_2(k[C])}_\bullet(\mathfrak{U}_C)\to\cdots,
$$
where $\op{H}^{\op{SL}_2(k[C])}_\bullet(\mathfrak{U}_C)$ is an analogue of
cuspidal homology (or homology of the Steinberg module) in the
number-theoretic situations.  
Moreover, the equivariant homology of the parabolic subcomplex  can be
computed very explicitly in terms of the homology of (normalizers of)
maximal tori and refined scissors congruence groups
$\mathcal{RP}^1_\bullet(k)$, cf. \prettyref{sec:rp1}. The following is
the main result of the  paper; it describes the equivariant homology
of the parabolic subcomplex for $\op{SL}_2(k[C])$ with $k$  an algebraically
closed field. For the proofs, cf. \prettyref{lem:conn},
\prettyref{prop:hlgytorus} and \prettyref{prop:hlgynorm}.

\begin{mainthm}
\label{thm:mainsl2}
Let $k$ be an algebraically closed field, let $\overline{C}$ be a
smooth projective curve over $k$, let $P_1,\dots,P_s\in \overline{C}$
be closed points, and set $C=\overline{C}\setminus\{P_1,\dots,P_s\}$. 
Denoting by $\mathfrak{P}_C$ the \emph{parabolic subcomplex} of the
building $\mathfrak{X}_C$, cf. \prettyref{def:pc}, we have the
following formulas for the $\op{SL}_2(k[C])$-equivariant homology of
$\mathfrak{P}_C$: 
\begin{enumerate} 
\item The connected components of the parabolic subcomplex
  $\mathfrak{P}_C$ are indexed by the quotient set
  $\mathcal{K}(C)=\Pic(C)/\iota$ of the Picard   group of $C$ modulo
  the involution $\iota:\mathcal{L}\mapsto \mathcal{L}^{-1}$.  
The result is a direct sum decomposition: 
  $$
\op{H}_\bullet^{\op{SL}_2(k[C])}(\mathfrak{P}_C,\mathbb{Z}[1/2])\cong
  \bigoplus_{[\mathcal{L}]\in\mathcal{K}(C)}
\op{H}_\bullet^{\op{SL}_2(k[C])}(\mathfrak{P}_C(\mathcal{L}),\mathbb{Z}[1/2]). 
$$ 
\item If $[\mathcal{L}]\in\mathcal{K}(C)$ is such that
  $\mathcal{L}|_C\not\cong\mathcal{L}|_C^{-1}$, then the homology of
  the component $\mathfrak{P}_C(\mathcal{L})$ is the homology of the
  group $k[C]^\times$:
$$
\op{H}_\bullet^{\op{SL}_2(k[C])}(\mathfrak{P}_C(\mathcal{L}),\mathbb{Z}[1/2])\cong 
\op{H}_\bullet(k[C]^\times,\mathbb{Z}[1/2]).
$$
\item If $[\mathcal{L}]\in\mathcal{K}(C)$ is such that
  $\mathcal{L}|_C\cong\mathcal{L}|_C^{-1}$, then there is a long exact
  sequence 
$$
\cdots\to
\mathcal{RP}^1_{i+1}(k)\otimes_{\mathbb{Z}}
\mathbb{Z}[1/2,k[C]^\times/(k[C]^\times)^2]\to 
\op{H}_i(\tilde{\mathcal{SN}},\mathbb{Z}[1/2])\to 
$$
$$
\to
\op{H}_i^{\op{SL}_2(k[C])}(\mathfrak{P}_C(\mathcal{L}),\mathbb{Z}[1/2])
\to 
\mathcal{RP}^1_i(k)\otimes_{\mathbb{Z}}
\mathbb{Z}[1/2,k[C]^\times/(k[C]^\times)^2]\to
\cdots, 
$$
where $\tilde{\mathcal{SN}}$ denotes the group of monomial matrices in
$\op{SL}_2(k[C])$ and $\mathcal{RP}^1_i(k)$ are refined scissors
congruence groups, cf. \prettyref{sec:rp1}. 
\end{enumerate}
\end{mainthm}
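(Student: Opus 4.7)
The plan is to analyze the parabolic subcomplex via the isotropy spectral sequence for the $\op{SL}_2(k[C])$-action, reducing the computation to the structure of cell stabilizers (which are subgroups of normalizers of maximal tori) together with the combinatorics of the Bruhat--Tits trees at each puncture.

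For part (1), I would use the standard parametrization of cells of $\mathfrak{X}_C$ by rank-two vector bundles on $\overline{C}$ and observe that the stabilizer is non-unipotent precisely when the bundle splits as $\mathcal{L}\oplus\mathcal{L}^{-1}$; two such splittings give the same cell if and only if they agree up to the swap $\iota$. This defines a surjection $\pi_0(\mathfrak{P}_C)\to\mathcal{K}(C)$, and \prettyref{lem:conn} supplies connectedness of each fiber. The direct sum decomposition of equivariant homology is then immediate since $\op{SL}_2(k[C])$ preserves determinants and acts trivially on $\mathcal{K}(C)$.

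For part (2), in a component $\mathfrak{P}_C(\mathcal{L})$ with $\mathcal{L}|_C\not\cong\mathcal{L}|_C^{-1}$ no cell stabilizer can contain a Weyl element, since such an element would send $\mathcal{L}$ to $\mathcal{L}^{-1}$ and cross components. All stabilizers thus lie in a fixed maximal torus $T\cong k[C]^\times$; the strategy is to exhibit a $T$-equivariant contraction of $\mathfrak{P}_C(\mathcal{L})$ to a vertex fixed by $T$, whence the isotropy spectral sequence degenerates and computes $\op{H}_\bullet(k[C]^\times)$.

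For part (3), when $\mathcal{L}|_C\cong\mathcal{L}|_C^{-1}$ the group $\tilde{\mathcal{SN}}$ of monomial matrices fixes a distinguished vertex and acts through the nontrivial Weyl swap on the rest. I would split the equivariant chain complex of $\mathfrak{P}_C(\mathcal{L})$ into the piece supported on this fixed locus (contributing $\op{H}_\bullet(\tilde{\mathcal{SN}})$) and the piece supported on cells moved by the Weyl element. The latter cells are organized, in each puncture direction, by configurations in $\mathbb{P}^1(k)$ on which $\tilde{\mathcal{SN}}$ acts through the Weyl involution, and their equivariant homology is by construction the refined scissors congruence groups $\mathcal{RP}^1_\bullet(k)$, twisted by $k[C]^\times/(k[C]^\times)^2$ to record the local residues of a trivialization of $\mathcal{L}$ modulo squares. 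The long exact sequence of the pair then yields the claim. The hardest step is this identification of the relative contribution with $\mathcal{RP}^1_\bullet(k)\otimes\mathbb{Z}[1/2,k[C]^\times/(k[C]^\times)^2]$: it requires matching the combinatorial $\tilde{\mathcal{SN}}$-action at each puncture with the defining presentation of the refined scissors congruence groups, which is also the point at which the $\mathbb{Z}[1/2]$-coefficients become essential, in order to invert the order of the Weyl group and split off the invariants of the swap action.
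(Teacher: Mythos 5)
Part (1) of your plan matches the paper (\prettyref{lem:conn} plus the determinant-preserving action), but parts (2) and (3) rest on a picture of the components that is not correct. For (2), you propose a $T$-equivariant contraction of $\mathfrak{P}_C(\mathcal{L})$ onto a vertex fixed by $T\cong k[C]^\times$. No such vertex exists in general: the subgroup $\mathcal{ST}\cong k[C]^\times/k^\times$ of $T$ acts on the apartment $\mathfrak{A}_{\mathcal{L}}\cong\mathcal{Z}^s$ by translations, hence freely, and the quotient $\op{SL}_2(k[C])\backslash\mathfrak{P}_C(\mathcal{L})\cong\mathcal{Z}^s/\mathcal{ST}$ is a torus of dimension $\operatorname{rk}(k[C]^\times/k^\times)$, which is positive as soon as $\ker\phi\neq 0$ in \prettyref{lem:picc} (e.g.\ $C=\mathbb{P}^1\setminus\{0,\infty\}$). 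The correct mechanism, as in \prettyref{prop:torus} and \prettyref{prop:hlgytorus}, is not a fixed point but the contractibility of $\mathcal{Z}^s$ together with the fact that $k^\times$ acts trivially and $k[C]^\times/k^\times$ acts freely, so that the K\"unneth formula produces $\op{H}_\bullet(k[C]^\times)$; one also needs the comparison of stabilizers $k^\times\hookrightarrow k^n\rtimes k^\times$ to see that the unipotent radicals do not change the $E^1$-page. Your sketch omits both points.

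For (3) the misidentification is more serious. The group $\tilde{\mathcal{SN}}$ again contains the free translation lattice $\mathcal{ST}$ and fixes no vertex; there is no ``distinguished fixed vertex'' and no decomposition of $\mathfrak{P}_C(\mathcal{L})$ into a fixed locus and a moved part. Moreover the quotient $\op{SL}_2(k[C])\backslash\mathfrak{P}_C(\mathcal{L})$ is exactly $\mathcal{Z}^s/\mathcal{SN}$ (\prettyref{prop:normalizer}), so there are no additional cells beyond the apartment whose equivariant homology could ``by construction'' be $\mathcal{RP}^1_\bullet(k)$; the refined scissors congruence groups are defined from an auxiliary acyclic complex of point configurations on $\mathbb{P}^1(k)$, not from a subcomplex of the building. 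What actually produces the long exact sequence is a comparison of two isotropy spectral sequences, for $\tilde{\mathcal{SN}}$ acting on $\mathcal{Z}^s$ and for $\op{SL}_2(k[C])$ acting on $\mathfrak{P}_C(\mathcal{L})$: these have isomorphic $E^1$-terms except at the $|k[C]^\times/(k[C]^\times)^2|$ orbits of special vertices, where the stabilizer grows from $\op{N}(k)=k^\times\rtimes\mathbb{Z}/2$ to $\op{SL}_2(k)$, and the discrepancy $\op{H}_\bullet(\op{N}(k))\to\op{H}_\bullet(\op{SL}_2(k))$ is measured by $\mathcal{RP}^1_\bullet(k)$ via \prettyref{prop:rp1exact}. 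This is also the true origin of the group ring $\mathbb{Z}[1/2,k[C]^\times/(k[C]^\times)^2]$ (it indexes the special vertices, not ``local residues of a trivialization of $\mathcal{L}$''). Finally, your plan says nothing about why either spectral sequence degenerates at $E^2$, which is the content of \prettyref{lem:ssnorm} and steps (4)--(7) of \prettyref{prop:hlgynorm} and is where the $\mathbb{Z}[1/2]$-coefficients are genuinely used.
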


There is a similar result for the equivariant homology of the
parabolic subcomplex for the group $\op{PGL}_2$,
cf. \prettyref{prop:hlgytorus} and \prettyref{prop:hlgynormgl2}. 

The result is proved by explicitly computing the quotient
$\op{SL}_2(k[C])\backslash\mathfrak{P}_C$ and then working out the
isotropy spectral sequence in detail. The analysis of the spectral
sequence is significantly 
simplified by working with $\mathbb{Z}[1/2]$-coefficients, and it is
not clear what modifications would be necessary to get a $2$-integral
result. The exact sequence appearing in \prettyref{thm:mainsl2},
point(3), above is induced 
from the one connecting group homology of $\op{SL}_2(k)$ to group
homology of the normalizer of the maximal torus in $\op{SL}_2(k)$ and
refined scissors congruence groups $\mathcal{RP}^1_i$. The latter
exact sequence would appear to be well-known, cf. \cite[Chapters 8,
15]{dupont}, but for the sake of completeness we discuss the
definition of the groups $\mathcal{RP}^1_i$ as well as a proof of the
abovementioned exact sequence in \prettyref{sec:rp1}.  

It has been known for some time, and surfaced particularly in the
recent work of Kevin Hutchinson \cite{hutchinson:rb,hutchinson:bw},
that the action of square classes $F^\times/(F^\times)^2$ is an
extremely helpful tool in understanding group homology of
$\op{SL}_2(F)$. The result above exhibits a complete description as
well as a geometric interpretation of the square-class action on the
parabolic part of group homology.  

The above computations of the equivariant homology
of the parabolic subcomplex also imply formulas for group homology of
$\op{SL}_2(k[C])$ above the dimension of the product of trees. The
next result follows directly from a mod $\ell$-version of
\prettyref{thm:mainsl2}  and \prettyref{prop:split}. 

\begin{corollary}
Let $k$ be an algebraically closed field, let $\overline{C}$ be a
smooth projective curve over $k$, and set
$C=\overline{C}\setminus\{P_1,\dots,P_s\}$.  Let $\ell$ be an odd prime
different from the characteristic of $k$. 
For $i>s$, there are isomorphisms
$$
\op{H}_i^{\op{SL}_2(k[C])}(\mathfrak{P}_C,\mathbb{Z}/\ell)\cong
\op{H}_i(\op{SL}_2(k[C]),\mathbb{Z}/\ell).
$$
In particular, we have a direct sum decomposition for mod $\ell$
group homology of $\op{SL}_2(k[C])$ above homological degree $s$
\begin{eqnarray*}
\op{H}_{\bullet>s}(\op{SL}_2(k[C]),\mathbb{Z}/\ell)&\cong&
  \bigoplus_{[\mathcal{L}]\in\mathcal{K}(C),\mathcal{L}|_C\not\cong\mathcal{L}|_C^{-1}}
\op{H}_{\bullet>s}(k[C]^\times,\mathbb{Z}/\ell)\\&\oplus &
\bigoplus_{[\mathcal{L}]\in\mathcal{K}(C),\mathcal{L}|_C\cong\mathcal{L}|_C^{-1}}
\op{H}_{\bullet>s}^{\op{SL}_2(k[C])}(\mathfrak{P}_C(\mathcal{L}),\mathbb{Z}/\ell)
\end{eqnarray*}
as well as an exact sequence describing the normalizer terms: 
$$
\cdots\to\mathcal{RP}^1_{i+1}(k,\mathbb{Z}/\ell)[k(C)^\times/(k(C)^\times)^2]\to 
\op{H}_{i}(\tilde{\mathcal{SN}},\mathbb{Z}/\ell) \to$$
$$
\op{H}_{i}^{\op{SL}_2(k[C])}(\mathfrak{P}_C(\mathcal{L}),\mathbb{Z}/\ell)\to 
\mathcal{RP}^1_{i}(k,\mathbb{Z}/\ell)[k(C)^\times/(k(C)^\times)^2]\to\cdots
$$
\end{corollary}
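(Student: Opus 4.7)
The plan is to combine the long exact sequence of \prettyref{prop:split} with a vanishing statement $\op{H}_i^{\op{SL}_2(k[C])}(\mathfrak{U}_C,\mathbb{Z}/\ell)=0$ for $i>s$, and then to transfer parts (1)--(3) of \prettyref{thm:mainsl2} to $\mathbb{Z}/\ell$-coefficients. Granted the vanishing, the long exact sequence
\[
\cdots\to\op{H}_i^{\op{SL}_2(k[C])}(\mathfrak{P}_C,\mathbb{Z}/\ell)\to \op{H}_i(\op{SL}_2(k[C]),\mathbb{Z}/\ell)\to \op{H}_i^{\op{SL}_2(k[C])}(\mathfrak{U}_C,\mathbb{Z}/\ell)\to\cdots
\]
immediately gives the first isomorphism of the corollary for $i>s$, since both $\mathfrak{U}_C$-terms bounding $\op{H}_i^G(\mathfrak{P}_C)\to\op{H}_i(G)$ vanish.

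The main step is therefore the vanishing. For this I would run the isotropy spectral sequence
\[
E^1_{p,q}=\bigoplus_{\sigma\in\mathfrak{U}_C^{(p)}/\op{SL}_2(k[C])}\op{H}_q(\op{Stab}(\sigma),\mathbb{Z}/\ell)\Rightarrow \op{H}_{p+q}^{\op{SL}_2(k[C])}(\mathfrak{U}_C,\mathbb{Z}/\ell).
\]
The building $\mathfrak{X}_C$ is a product of $s$ Bruhat--Tits trees (one per puncture $P_i$), so its subcomplex $\mathfrak{U}_C$ has dimension at most $s$ and $E^1_{p,q}$ vanishes for $p>s$. By the very definition of $\mathfrak{U}_C$ as the complement of the parabolic subcomplex (cf. \prettyref{def:pc}), the stabilizer of every cell of $\mathfrak{U}_C$ is a unipotent subgroup of $\op{SL}_2(k(C))$; such a group is an iterated extension of $k$-vector spaces, and since $\ell$ is coprime to $\op{char}(k)$ each such vector space is uniquely $\ell$-divisible and thus $\ell$-acyclic in positive degrees. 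A Lyndon--Hochschild--Serre argument propagates this to the full unipotent stabilizer, so the $E^1$-page is concentrated in the row $q=0$ and the columns $0\le p\le s$. The abutment therefore vanishes in degrees $>s$, as required.

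With the first isomorphism in hand, the direct sum decomposition and the long exact sequence describing the self-dual components follow by replaying the proofs of \prettyref{thm:mainsl2}(1)--(3) with $\mathbb{Z}/\ell$ in place of $\mathbb{Z}[1/2]$; since $\ell$ is odd, $2$ is invertible in the coefficients and the averaging arguments collapsing the isotropy spectral sequence for $\mathfrak{P}_C$ go through verbatim. The only potential obstacle is the vanishing step, and even there all the work is hidden in \prettyref{def:pc}: once one knows that every cell outside $\mathfrak{P}_C$ has genuinely unipotent stabilizer, the $\ell$-acyclicity argument is standard, and the rest of the corollary is a routine transport across the change of coefficients.
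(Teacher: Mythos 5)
Your proposal is correct and follows essentially the same route as the paper: Proposition \ref{prop:split}(2) is proved there by exactly your argument (unipotent stabilizers are uniquely $\ell$-divisible, the isotropy spectral sequence for $\mathfrak{U}_C$ collapses to the $q=0$ row, and the quotient has dimension $\le s$), and the remaining statements are obtained as the mod-$\ell$ version of Theorem \ref{thm:mainsl2}, i.e.\ by rerunning Lemma \ref{lem:conn}, Proposition \ref{prop:hlgytorus} and Proposition \ref{prop:hlgynorm} with $\mathbb{Z}/\ell$-coefficients (which replaces $\mathcal{RP}^1_\bullet(k)$ by $\mathcal{RP}^1_\bullet(k,\mathbb{Z}/\ell)$ rather than tensoring the integral sequence). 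The only slight imprecision is that for $\op{SL}_2$ the stabilizers of cells in $\mathfrak{U}_C$ are extensions of unipotent groups by the order-two center rather than literally unipotent, but this does not affect $\ell$-acyclicity for odd $\ell$.
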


Note that the above exact sequence is not simply the exact sequence of
\prettyref{thm:mainsl2} tensored with finite coefficients, but using a
mod $\ell$-version of the groups $\mathcal{RP}^1_\bullet(k)$,
cf. \prettyref{sec:rp1}. 

The above theorem provides a generalization of a theorem of Suslin,
cf. \cite[Theorem 4.5.7]{knudson:book}; for $s=1$ the above
formula reduces to the one in loc.cit.: 
\begin{eqnarray*}
\op{H}_{\bullet>1}(\op{PGL}_2(k[C]),\mathbb{Z}/\ell)
&\cong&\bigoplus_{\mathcal{L}\in \mathcal{K}(C), 2\mathcal{L}=0}
\op{H}_{\bullet>1}(\op{PGL}_2(k),\mathbb{Z}/\ell)\\&\oplus &
\bigoplus_{\mathcal{L}\in\mathcal{K}(C), 2\mathcal{L}\neq 0}
\op{H}_{\bullet>1}(k^\times,\mathbb{Z}/\ell).
\end{eqnarray*}
Note that the finite coefficients are necessary: on the
curve $\overline{C}$, there may exist bundles with unipotent
automorphism group (like Atiyah's bundles $\mathcal{F}_2$ on an
elliptic curve). The automorphism groups influence the homology in
degrees above $s$, but are not accounted for in the parabolic
subcomplex $\mathfrak{P}_C$. They do, however, not influence the mod
$\ell$ homology above degree $s$ because the additive group $(k,+)$ is
uniquely $\ell$-divisible.

The definition of the parabolic subcomplex is functorial in the curve,
hence morphisms of curves induce morphisms on equivariant homology of
the corresponding parabolic subcomplexes. The induced morphisms can be
explicitly described and, for a function field $k(C)$, allow to define
a ``parabolic homology'' of $SL_2(k(C))$ via the obvious limit
process. In analogy to the notation for Farrell-Tate homology, we
denote 
$$
\widehat{\op{H}}_\bullet(\op{SL}_2(k(C)),\mathbb{Z}/\ell)=
\colim_{S\subseteq\overline{C}(k)}
\op{H}_\bullet^{\op{SL}_2(k[\overline{C}\setminus
  S])}(\mathfrak{P}_{\overline{C}\setminus S},\mathbb{Z}/\ell),
$$
where the index set of the colimit above is the set of finite sets
of closed points of $\overline{C}$, ordered by inclusion. 
The second main result of the paper now provides a formula for the
parabolic homology of $\op{SL}_2(k(C))$ and establishes a rigidity
result for it. It may not come as a surprise, but taking the limit to
the algebraic closure  of a function field,  the parabolic homology
has exactly the form predicted by the Friedlander-Milnor
conjecture, cf. \cite{friedlander:mislin}. The result follows from
\prettyref{prop:hlgyfun} and the 
above \prettyref{thm:mainsl2} resp. its corollary, cf. \prettyref{prop:fthlgy}.

\begin{mainthm}
\label{thm:main2}
Let $k$ be an algebraically closed field, let $C$ be a smooth curve
over $k$ and let $\ell$ be an odd prime different from the characteristic
of $k$. Then we have the following exact sequence 
$$
\cdots\to
\mathcal{RP}^1_{i+1}(k,\mathbb{Z}/\ell)[k(C)^\times/(k(C)^\times)^2]\to 
\op{H}_i(\op{N}(k(C)),\mathbb{Z}/\ell)\to 
$$
$$
\to
\widehat{\op{H}}_i(\op{SL}_2(k(C)),\mathbb{Z}/\ell)
\to 
\mathcal{RP}^1_i(k,\mathbb{Z}/\ell)[k(C)^\times/(k(C)^\times)^2]\to
\cdots, 
$$
where $\op{N}(k(C))$ denotes the normalizer of a maximal torus in
$\op{SL}_2(k(C))$. 

\begin{enumerate}
\item 
All classes of $\widehat{\op{H}}_i(\op{SL}_2(k(C)),\mathbb{Z}/\ell)$
become constant over the quadratic closure of $k(C)$. 
\item 
Assume $\mathcal{RP}^1_\bullet(k)=0$, i.e., Friedlander's generalized
isomorphism conjecture is true for $\op{SL}_2$ over $k$. Then 
\begin{eqnarray*}
\widehat{\op{H}}_\bullet(\op{SL}_2(\overline{k(C)}),\mathbb{Z}/\ell)&:=&
\colim_{K/k(C)
  \op{ finite}}\widehat{\op{H}}_\bullet(\op{SL}_2(K),\mathbb{Z}/\ell)
\\
&\cong &
\op{H}_\bullet(\op{N}(\overline{k(C)}),\mathbb{Z}/\ell)
\end{eqnarray*}
has exactly the form predicted by Friedlander's isomorphism conjecture
for $\op{SL}_2$ over $\overline{k(C)}$. 
In particular, Friedlander's generalized isomorphism conjecture for
$\op{SL}_2$ over $\overline{k(C)}$ is equivalent to vanishing of 
$$
\colim_{C/k}\op{H}_\bullet^{\op{SL}_2(k[C])}(\mathfrak{U}_C,\mathbb{Z}/\ell),
$$
where the colimit runs over all smooth affine curves over $k$. 
\end{enumerate}
\end{mainthm}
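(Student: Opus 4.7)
The plan is to obtain the main exact sequence by passing to a filtered colimit in the mod-$\ell$ corollary of \prettyref{thm:mainsl2}, and then to deduce the rigidity statement (1) and the Friedlander--Milnor reformulation (2) from the resulting sequence combined with the long exact sequence of \prettyref{prop:split}.

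For the main exact sequence, I would take the colimit of the corollary's exact sequence over finite sets of closed points $S\subseteq\overline{C}(k)$, noting that group homology commutes with filtered colimits, and identify each term. By definition, the parabolic homology groups colimit to $\widehat{\op{H}}_\bullet(\op{SL}_2(k(C)),\mathbb{Z}/\ell)$. Every nonzero rational function on $\overline{C}$ is regular on some open subcurve, so $\colim_S k[\overline{C}\setminus S]^\times = k(C)^\times$ and hence $\colim_S k[\overline{C}\setminus S]^\times/(k[\overline{C}\setminus S]^\times)^2 = k(C)^\times/(k(C)^\times)^2$. Consequently the monomial groups $\tilde{\mathcal{SN}}$ for the affine curves $\overline{C}\setminus S$ colimit to the normalizer $\op{N}(k(C))$ of the diagonal maximal torus in $\op{SL}_2(k(C))$. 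The scissors congruence groups $\mathcal{RP}^1_i(k,\mathbb{Z}/\ell)$ do not depend on $S$, so the tensor factors pass through the colimit. Finally, since the Jacobian of $\overline{C}$ is generated via Abel--Jacobi by differences of closed points, the Picard groups satisfy $\colim_S\Pic(\overline{C}\setminus S)=0$, so the direct sum decomposition of the corollary collapses in the limit onto the single component with square-trivial line bundle: the components with $\mathcal{L}|_{C}\not\cong\mathcal{L}|_C^{-1}$, whose homology is $\op{H}_\bullet(k[\overline{C}\setminus S]^\times)$, get identified with the trivial component by the functorial transition maps of \prettyref{prop:hlgyfun} once $S$ is large enough to trivialize the defining line bundle.

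For part (1), I would apply the same construction at any field $F$ with $F\supseteq\widetilde{k(C)}$: because $F^\times/(F^\times)^2$ is trivial, the scissors terms in the resulting exact sequence reduce to $\mathcal{RP}^1_\bullet(k,\mathbb{Z}/\ell)$, which depend only on $k$. A naturality and five-lemma argument applied to the transition morphism $\widehat{\op{H}}_\bullet(\op{SL}_2(F),\mathbb{Z}/\ell)\to\widehat{\op{H}}_\bullet(\op{SL}_2(F'),\mathbb{Z}/\ell)$ for $F\subseteq F'\subseteq\overline{k(C)}$ then forces this map to be an isomorphism, once one knows the analogous stabilization for the normalizer homology — which follows because the torus $F^\times$ already has trivialized square classes and mod-$\ell$ torus homology insensitive to further 2-extensions. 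For part (2), the vanishing $\mathcal{RP}^1_\bullet(k)=0$ collapses the main exact sequence to the desired isomorphism $\widehat{\op{H}}_\bullet(\op{SL}_2(k(C)),\mathbb{Z}/\ell)\cong\op{H}_\bullet(\op{N}(k(C)),\mathbb{Z}/\ell)$; a further colimit over finite extensions of $k(C)$ produces the displayed formula for $\overline{k(C)}$. For the final equivalence, I would take the colimit of \prettyref{prop:split} over smooth affine curves to obtain
\[
\cdots\to\widehat{\op{H}}_\bullet(\op{SL}_2(\overline{k(C)}),\mathbb{Z}/\ell)\to\op{H}_\bullet(\op{SL}_2(\overline{k(C)}),\mathbb{Z}/\ell)\to\colim_{C/k}\op{H}_\bullet^{\op{SL}_2(k[C])}(\mathfrak{U}_C,\mathbb{Z}/\ell)\to\cdots,
\]
and combine with the isomorphism just established: Friedlander's predicted isomorphism $\op{H}_\bullet(\op{SL}_2)\cong\op{H}_\bullet(\op{N})$ is equivalent to the vanishing of the cuspidal colimit.

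The main technical obstacle is the collapse of the Picard component decomposition in the limit, which requires verifying that under the functoriality of the parabolic subcomplex (cf.\ \prettyref{prop:hlgyfun}) the ``non-trivial'' $\op{H}_\bullet(k[\overline{C}\setminus S]^\times)$-components are correctly absorbed into the ``trivial'' exact-sequence component once enough points have been removed; a secondary obstacle is making the rigidity argument of (1) actually rigorous, since one must check compatibility of the quadratic extension map on all five terms of the exact sequence simultaneously, and in particular control the behaviour of $\op{H}_\bullet(\op{N}(F),\mathbb{Z}/\ell)$ under extensions of $F$ past its quadratic closure.
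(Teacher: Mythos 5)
Your derivation of the main exact sequence and of part (2) follows essentially the same route as the paper: the sequence is obtained in \prettyref{prop:fthlgy} as a direct consequence of the functoriality statement \prettyref{prop:hlgyfun} by passing to the colimit over finite sets $S$ of closed points, and your identification of the colimits of the individual terms ($\colim_S k[\overline{C}\setminus S]^\times=k(C)^\times$, $\colim_S\op{Pic}(\overline{C}\setminus S)=0$, hence collapse of the component decomposition onto the trivial square class via \prettyref{prop:hlgyfun}, with $\tilde{\mathcal{SN}}$ becoming $\op{N}(k(C))$) is exactly the intended argument. The same goes for deducing the final equivalence in (2) from the colimit of the long exact sequence of \prettyref{prop:split} together with the collapsed sequence when $\mathcal{RP}^1_\bullet(k)=0$.

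The gap is in your argument for part (1). You want the transition maps $\widehat{\op{H}}_\bullet(\op{SL}_2(F),\mathbb{Z}/\ell)\to\widehat{\op{H}}_\bullet(\op{SL}_2(F'),\mathbb{Z}/\ell)$ to be isomorphisms for $F\subseteq F'$ containing the quadratic closure, via the five lemma, and for this you assert that $\op{H}_\bullet(\op{N}(F),\mathbb{Z}/\ell)$ is ``insensitive to further $2$-extensions''. This premise fails twice over. First, extensions of the quadratic closure are not $2$-extensions (there are none); the relevant transition maps come from arbitrary finite extensions $F'/F$. Second, and decisively, $\op{H}_\bullet(\op{N}(F),\mathbb{Z}/\ell)\cong\op{H}_\bullet(F^\times,\mathbb{Z}/\ell)_{\mathbb{Z}/2}$ is not rigid under such extensions: inversion acts trivially on $\Lambda^2(F^\times\otimes\mathbb{Z}/\ell)\subseteq\op{H}_2(F^\times,\mathbb{Z}/\ell)$, so this large, field-dependent group survives the coinvariants for $\ell$ odd, and $F^\times\otimes\mathbb{Z}/\ell\to F'^\times\otimes\mathbb{Z}/\ell$ is in general neither injective nor surjective. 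Hence the normalizer column of your ladder does not stabilize and the five lemma gives nothing. What your observation does establish is only that the square-class decoration on the $\mathcal{RP}^1$-terms trivializes over the quadratic closure, so that every class lands in the single component indexed by the trivial square class, i.e.\ in the part detected on one copy of $\op{N}$ and one constant $\op{SL}_2(k)$; this weaker statement is what ``become constant'' can reasonably mean here (the paper itself supplies no more detailed argument for (1)), and it should be formulated that way rather than as an isomorphism of the full parabolic homology groups.
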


The above result provides two reformulations of Friedlander's
isomorphism conjecture, one as a divisibility result for ``the limit
of cuspidal homology'', and one as a detection result of homology on
the normalizer of the maximal torus. The second reformulation in turn
is close to \cite[Corollary 5.2.10]{knudson:book}. In general, we see
that the parabolic homology
$\widehat{\op{H}}_\bullet(\op{SL}_2(k(C)),\mathbb{Z}/\ell)$ describes
exactly the part of the  homology of $\op{SL}_2(k(C))$ which can be
detected on the normalizer $\op{N}(k(C))$ and some additional
subgroups of $\op{SL}_2(k(C))$ isomorphic to
$\op{SL}_2(k)$. Moreover, we see that the parabolic homology satisfies
a much stronger rigidity than expected by Friedlander's isomorphism
conjecture: classes in the parabolic homology become trivial already
over quadratically closed fields. 
Hopefully, the above result helps shed new light on
the homology of $\op{SL}_2$ over algebraically closed fields. 

Finally, we want to mention that related (and as it turns out
structurally similar) computations in the number
field case, i.e., computations of Farrell-Tate cohomology of
$\op{SL}_2(\mathcal{O}_{K,S})$ with $\mathcal{O}_{K,S}$ a ring of
$S$-integers, are being developed in joint work with Alexander D. Rahm.  

\subsection{Structure of the paper:}
We first recall preliminaries on trees and group actions in 
\prettyref{sec:prelims}. The definition and basic properties of the
parabolic subcomplex $\mathfrak{P}_C$ are given in
\prettyref{sec:pardef}. Then \prettyref{sec:local} works out the
actions of stabilizers on links and the resulting local structure of
the quotient of the parabolic subcomplex, leading to a global
description of the structure of the quotient
$\op{SL}_2(k[C])\backslash\mathfrak{P}_C$ in
\prettyref{sec:global}. The structure of $\op{SL}_2(k[C])$-equivariant
homology of the parabolic subcomplex $\mathfrak{P}_C$ is determined in
\prettyref{sec:parhlgy}. The appendix \prettyref{sec:rp1} provides
a recollection of basic facts on the refined scissors congruence
groups $\mathcal{RP}^1_\bullet(k)$.

\subsection{Acknowledgements:}
The investigations reported in the paper started during a stay at the
De Br\'un center for computational algebra at NUI Galway in August
2012. I would like to thank Alexander D. Rahm for discussions about
Farrell-Tate cohomology and his computations with Bianchi group
\cite{rahm:transactions,rahm:jalg}, which 
shaped my understanding of the structure and possible usefulness of
the parabolic part of group homology described in the paper. I would
also like to thank Kevin Hutchinson for explanations on his
computations of homology of $\op{SL}_2$ and residue maps for refined
Bloch  groups in \cite{hutchinson:rb,hutchinson:bw} as well as
inspiring discussions on various topics related to group homology. 

\section{Preliminaries: trees, group actions, vector bundles} 
\label{sec:prelims}

In this section, we fix the notation for the paper, and recall
preliminaries on rank one linear groups and (products of) trees
associated to them. We also discuss the well-known identification of
the quotient of the building modulo the group action in terms of
vector bundles on the curve. 

We mostly follow the notation of \cite{serre:book}. 
\begin{itemize}
\item $k$ denotes a commutative field, 
\item $\overline{C}$ an irreducible smooth projective curve over $k$,
\item $C=\overline{C}\setminus\{P_1,\dots,P_s\}$ a smooth affine curve
  over $k$, with $P_1,\dots,P_s$ pairwise distinct but not necessarily
  $k$-rational points of respective degrees $d_i=[k(P_i):k]$,
\item $K=k(C)$ the function field of the curve $C$, 
\item $k[C]$ the ring of functions on the curve $C$,
\item for a closed point $Q$ of $C$, $v_Q$ denotes the corresponding 
  valuation, $\mathcal{O}_Q$ the valuation ring, $k(Q)$ its residue
  field, $\deg Q=[k(Q):k]$ its degree.
\end{itemize}

\subsection{Recollection on buildings}
\label{sec:buildings}

We recall the definition and structure of the Bruhat-Tits tree
associated to $\op{SL}_2$ over a field $K$ with a valuation $v$. For
details on Bruhat-Tits trees, cf. \cite[II.1.1]{serre:book}, for the
more general theory of buildings, cf. \cite{abramenko:brown}. All the
statements below are standard and can be found in one of these books.

\begin{definition}
Let $K$ be a field equipped with a discrete valuation $v$. We denote
by $\mathcal{O}_v$ the corresponding valuation ring with maximal ideal
$\mathfrak{m}_v$. We denote by $\pi_v$ a choice of uniformizer for
$v$. Let $V=K^2$. A \emph{lattice} $L$ in $V$ is a
finitely generated $\mathcal{O}_v$-submodule of $V$ which generates
$V$. Two lattices $L_1$ and $L_2$ are called \emph{equivalent} if
there exists $\lambda\in K^\times$ such that $\lambda L_1=L_2$. We
denote by $\Lambda=[L]$ the equivalence class of $L$. To a lattice
class $\Lambda$, we can assign a \emph{type} $v(\det\Lambda)\mod 
2\in\mathbb{Z}/2\mathbb{Z}$.  

The Bruhat-Tits tree associated to $(K,v)$ is the following simplicial
complex: the $0$-simplices are equivalence classes of
lattices. Lattice classes $\Lambda_0$ and $\Lambda_1$ are connected by
an edge if there exist representatives $L_i$ of $\Lambda_i$ such that
$\pi_v L_1\subset L_0\subset L_1$. The resulting simplicial
complex is a tree, denoted by $\mathfrak{T}_v$. 

There is an obvious action of $\op{GL}_2(K_v)$ on $\mathfrak{T}_v$ by
setting  
$$
\op{GL}_2(K_v)\times\mathfrak{T}_v\rightarrow\mathfrak{T}_v:(m,\Lambda)\mapsto
m\Lambda.
$$
\end{definition}

Note that $\op{GL}_2(K_v)$ acts transitively on the vertices of the
Bruhat-Tits tree for $v$, and $\op{SL}_2(K_v)$ acts transitively on the
vertices of fixed type. The center acts trivially, i.e., the above
actions on the building factor through actions of $\op{PGL}_2(K_v)$
and $\op{PSL}_2(K_v)$, respectively. 

For each vertex $x=[L]$ with representative  lattice $L$, there is a
bijection between the link $\operatorname{Lk}(x)$ and the lattices
$L'$ with $\pi_v L\subset L'\subset L$, hence with one-dimensional
subspaces of the two-dimensional $k_v$-vector space $L/\pi_v L$. Thus,
the elements of the link $\operatorname{Lk}(x)$ are in bijection with
the $k_v$-points of the projective line $\mathbb{P}^1(k_v)$,  where $k_v$
is the residue field of the valuation $v$ on $K$. 
In particular, the tree is homogeneous. 

We can be more precise about the correspondence between the link and
$\mathbb{P}^1(k_v)$: it is induced by mapping the points
$x\in\mathbb{A}^1(k_v)$ and $x=\infty$ to the lattice classes
$$
\left[L\cdot \left(\begin{array}{cc}
\pi_v&\widetilde{x}\\0&1\end{array}\right)\right]\textrm{ and
}
\left[L\cdot \left(\begin{array}{cc}
\pi_v^{-1}&0\\0&1\end{array}\right)\right], 
$$
where $\tilde{x}$ is a lift of $x\in k_v$ to
$\mathcal{O}_v$. This description of the correspondence of course
depends on a choice of basis vector for the lines in $k_v^2$, a choice
of lift to $L$ and a choice of completion to a basis of $L$.

Let $k$ be a field, let $\overline{C}$ be a smooth projective curve
over $k$, and set $C=\overline{C}\setminus\{P_1,\dots,P_s\}$ for a
non-empty finite set of pairwise distinct closed points
$P_1,\dots,P_s$ of $C$. Denote by $v_1,\dots,v_s$ the corresponding
valuations on the function field $K=k(C)$, and denote by
$\mathfrak{T}_i$ the Bruhat-Tits tree associated to the valuation
$v_i$. Then the group $\op{SL}_2(k[C])$ acts on
$\mathfrak{X}_C:=\mathfrak{T}_1\times\cdots\times\mathfrak{T}_s$ via
the embedding
$$
\op{SL}_2(k[C])\hookrightarrow \op{SL}_2(k(C))\hookrightarrow
\op{SL}_2(k(C)_{v_1})\times\cdots\times \op{SL}_2(k(C)_{v_s}).
$$
The product
$\mathfrak{X}_C=\mathfrak{T}_1\times\cdots\times\mathfrak{T}_s$ is the 
Bruhat-Tits building associated to $\op{SL}_2$ and the smooth affine
curve $C$. We view it as a cubical complex of dimension $s$ whose
non-degenerate cubes are products of edges from the factors
$\mathfrak{T}_i$. 

To describe the local structure of the product, we consider the link
of $0$-simplices. Recall that for $X$ a cubical complex and $\sigma$
a $0$-cube of $X$, the link  $\operatorname{Lk}_X(\sigma)$ of $\sigma$
in $X$ is the following simplicial complex: its $0$-simplices are the
$0$-cubes of $X$ connected to $\sigma$ via a $1$-cube,
and $0$-simplices $\sigma_0,\dots,\sigma_m$ span an $m$-simplex if
there exists an $(m+1)$-cube containing $\sigma,\sigma_0,\dots,\sigma_m$.  
For a vertex $(x_1,\dots,x_s)$ in
$\mathfrak{T}_1\times\cdots\times\mathfrak{T}_s$, the link of
$(x_1,\dots,x_s)$ is then the following simplicial complex: its set of
$0$-simplices is the disjoint union $\bigsqcup_{i=1}^s
\operatorname{Lk}_{\mathfrak{T}_i}(x_i)$, and for each choice of index
set $I\subseteq \{1,\dots,s\}$ of cardinality $n$ and elements
$\{y_i\in\operatorname{Lk}_{\mathfrak{T}_i}(x_i)\}_{i\in I}$, there is an 
$n$-simplex $(y_1,\dots,y_n)$ corresponding to the $(n+1)$-cube
spanned by $(x_1,\dots,x_s)$ and the $y_i$. In particular, for $n=s=2$,
the link of 
$(x_1,x_2)$ is the complete bipartite graph on the links of $x_1$ and
$x_2$. 

Again, we can be more precise about the lattices in the link: assume
that the vertex $(x_1,\dots,x_s)$ is represented by the lattice 
classes $([L_1],\dots,[L_s])$. Then for each choice of index $i\in
\{1,\dots,s\}$ and element $\alpha_i\in\mathbb{P}^1(k(P_i))$,
the corresponding point in the link of $(x_1,\dots,x_s)$ is given by
$([L_1],\dots,[L_i\cdot M_i],\dots,[L_s])$, where $M_i$ is the
corresponding matrix
$$
M_i= \left(\begin{array}{cc}
\pi_i&\widetilde{\alpha_i}\\0&1\end{array}\right)\textrm{ resp.
}
 \left(\begin{array}{cc}
\pi_i^{-1}&0\\0&1\end{array}\right).
$$

\subsection{Buildings and vector bundles}

As discussed, for a smooth affine curve
$C=\overline{C}\setminus\{P_1,\dots,P_s\}$, the groups
$\op{GL}_2(k[C])$ and $\op{SL}_2(k[C])$ act on the Bruhat-Tits
building $\mathfrak{X}_C$. It is well-known that the quotient can be
described in terms of vector bundles on the curve $\overline{C}$,
cf. \cite[Propositions II.2.4, II.2.5]{serre:book},  \cite{stuhler:76}
and \cite{stuhler:80}. We recall the necessary steps. 

\begin{definition}
Let $(L_1,\dots,L_s)$ be a tuple, in which $L_i$ is an
$\mathcal{O}_{P_i}$-lattice in $V=K^2$. 
We associate to this point in $\mathfrak{X}_C$
the unique coherent subsheaf $\mathcal{E}=\mathcal{E}(L_1,\dots,L_s)$
of the constant sheaf $V$ given by taking the stalk of $\mathcal{E}$
at a point $Q\in \overline{C}$ to be $L_i$ if $Q=P_i$ and
$\mathcal{O}_Q^2$ otherwise.  

Two vector bundles $\mathcal{E}_1$ and $\mathcal{E}_2$ are called
equivalent rel $\partial C$ if there exist integers $m_1,\dots,m_s$
such that 
$$
\mathcal{E}_1\otimes
\mathcal{O}_{\overline{C}}(-P_1)^{m_1}\otimes\cdots\otimes 
\mathcal{O}_{\overline{C}}(-P_s)^{m_s}\cong \mathcal{E}_2,
$$
where $\mathcal{O}_{\overline{C}}(-P)$ is the ideal sheaf of functions
vanishing at $P$.
\end{definition}

It is easy to see that $\mathcal{E}(L_1,\dots,L_s)$ is the sheaf of
sections of a rank two vector bundle over $\overline{C}$ whose
restriction to $C$ is trivial. In the following, there will be no
notational distinction between the sheaf of sections and the vector
bundle. It also follows easily that for scalars
$\lambda_1,\dots,\lambda_s\in K^\times$, we have  
$$
\mathcal{E}(L_1,\dots,L_s)\otimes
\mathcal{O}_{\overline{C}}(-P_1)^{v_1(\lambda_1)}\otimes\cdots\otimes
\mathcal{O}_{\overline{C}}(-P_s)^{v_s(\lambda_s)}
\cong 
\mathcal{E}(\lambda_1L_1,\dots,\lambda_sL_s).
$$

From these remarks, we have the following identification of the
quotient of the building in terms of vector bundles. 

\begin{proposition}
\label{prop:quotient}
The two assignments 
$$
(L_1,\dots,L_s)\mapsto ([L_1],\dots,[L_s])\in\mathfrak{X}_C,
\textrm{ and }
(L_1,\dots,L_s)\mapsto \mathcal{E}(L_1,\dots,L_s)
$$
induce a bijection between
\begin{enumerate}
\item the $0$-cells of the quotient
  $\op{SL}_2(k[C])\backslash\mathfrak{X}_C$, and 
\item equivalence (rel $\partial C$) classes of pairs
  $(\mathcal{E},f)$ where $\mathcal{E}$ is a rank two vector bundle on
  $\overline{C}$ whose restriction to $C$ is trivial, and
  $$f:\det\mathcal{E}\rightarrow
  \mathcal{O}_{\overline{C}}(-P_1)^{m_1}\otimes\cdots\otimes
  \mathcal{O}_{\overline{C}}(-P_s)^{m_s}$$ is a fixed isomorphism for
  suitable $m_i$.  
\end{enumerate}
This bijection further induces (by forgetting the fixed determinant) a 
bijection between 
\begin{enumerate}
\item the $0$-cells of the quotient
  $\op{GL}_2(k[C])\backslash\mathfrak{X}_C$, and 
\item equivalence (rel $\partial C$) classes of rank two vector
  bundles $\mathcal{E}$ on $\overline{C}$ whose restriction to $C$ is
  trivial. 
\end{enumerate}
\end{proposition}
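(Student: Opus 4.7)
The plan is to proceed in two steps: first I would establish that the construction $(L_1,\dots,L_s)\mapsto \mathcal{E}(L_1,\dots,L_s)$ descends to a well-defined map from the $0$-cells of the quotient $\op{GL}_2(k[C])\backslash\mathfrak{X}_C$ to equivalence classes rel $\partial C$ of rank two bundles trivial on $C$; and then I would construct an inverse. The well-definedness amounts to two observations already isolated in the excerpt: scaling the $i$-th lattice by $\lambda_i\in K^\times$ twists $\mathcal{E}$ by $\mathcal{O}_{\overline{C}}(-P_i)^{v_i(\lambda_i)}$, so the lattice-class data $([L_1],\dots,[L_s])$ determines $\mathcal{E}$ rel $\partial C$; and an element $g\in\op{GL}_2(k[C])$ acts as an automorphism of $K^2$ that is regular on $C$, so it carries $\mathcal{E}(L_1,\dots,L_s)$ isomorphically onto $\mathcal{E}(gL_1,\dots,gL_s)$ by applying $g$ to the inclusion into the constant sheaf $V$.

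For surjectivity, given any rank two bundle $\mathcal{E}$ on $\overline{C}$ with $\mathcal{E}|_C$ trivial, I would choose a trivialization $\varphi:\mathcal{E}|_C\xrightarrow{\sim}\mathcal{O}_C^{\oplus 2}$. Passing to the generic fibre identifies $\mathcal{E}\otimes K$ with $V=K^2$, and then for each $P_i$ the stalk $\mathcal{E}_{P_i}$ is a finitely generated $\mathcal{O}_{P_i}$-submodule of $V$ spanning $V$, i.e.\ a lattice $L_i$. Since $\mathcal{E}$ and $\mathcal{E}(L_1,\dots,L_s)$ are coherent subsheaves of the constant sheaf $V$ with identical stalks at every closed point, they coincide.

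For injectivity, suppose $\mathcal{E}(L_1,\dots,L_s)$ and $\mathcal{E}(L_1',\dots,L_s')$ are equivalent rel $\partial C$. After replacing the $L_i'$ by suitable scalar multiples (which only changes the lattice classes $[L_i']$ trivially), one gets an honest isomorphism $\Phi$ of the two bundles inside $V$. Restricting to $C$, $\Phi$ becomes an automorphism of $\mathcal{O}_C^{\oplus 2}$, which is an element $g\in\op{GL}_2(k[C])$; and then $g L_i=L_i'$ for all $i$, proving that $([L_1],\dots,[L_s])$ and $([L_1'],\dots,[L_s'])$ lie in the same $\op{GL}_2(k[C])$-orbit. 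The $\op{SL}_2$-statement is then deduced by the standard observation that the determinant of $\mathcal{E}(L_1,\dots,L_s)$ is a line bundle on $\overline{C}$ trivial on $C$, hence isomorphic to $\mathcal{O}_{\overline{C}}(-P_1)^{m_1}\otimes\cdots\otimes\mathcal{O}_{\overline{C}}(-P_s)^{m_s}$ for suitable $m_i$, and that specifying such an isomorphism $f$ up to the $\op{SL}_2$-ambiguity is precisely the same as rigidifying the $\op{GL}_2$-data to the $\op{SL}_2$-data, since $\op{GL}_2(k[C])/\op{SL}_2(k[C])\cong k[C]^\times$ acts via determinants.

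The routine bookkeeping is entirely local at the punctures $P_i$; the only delicate point I anticipate is verifying that two choices of trivialization of $\mathcal{E}|_C$ yield lattice tuples differing by an element of $\op{GL}_2(k[C])$ (not merely of $\op{GL}_2(K)$), and that ``equivalence rel $\partial C$'' between two such tuples genuinely reduces, after an appropriate rescaling, to a global bundle isomorphism inside $V$. Both reduce to the fact that $\mathcal{O}_{\overline{C}}(-\sum m_i P_i)$ has a nowhere vanishing rational section that is a unit away from the $P_i$, so twisting by this line bundle can be realized by multiplying the local lattice data by appropriate powers of the uniformizers.
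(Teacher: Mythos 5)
Your argument is correct and is the standard one that the paper's preparatory remarks set up (the paper itself records no proof of this proposition, deferring to Serre and Stuhler): well-definedness via the twisting formula $\mathcal{E}(\lambda_1L_1,\dots,\lambda_sL_s)\cong\mathcal{E}(L_1,\dots,L_s)\otimes\bigotimes_i\mathcal{O}_{\overline{C}}(-P_i)^{v_i(\lambda_i)}$, surjectivity by reading off the stalks of $\mathcal{E}\subset V$ from a trivialization of $\mathcal{E}|_C$, and injectivity by restricting a bundle isomorphism to an automorphism of $\mathcal{O}_C^{\oplus 2}$, i.e.\ an element of $\op{GL}_2(k[C])$. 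The $\op{SL}_2$ refinement is the only part you leave as a sketch; to complete it one would note that twisting by $\mathcal{O}_{\overline{C}}(-P_i)^{n_i}$ alters $\det\mathcal{E}$ by a square, so that the fibres of the two forgetful maps over a fixed $\op{GL}_2$-class are torsors under the same quotient of $k[C]^\times\cong\op{GL}_2(k[C])/\op{SL}_2(k[C])$ --- but this matches the level of detail the paper itself provides.
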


\begin{remark}
The vector bundle classification is usually stated in different form:
the set of isomorphism classes of rank two vector bundles on
$\overline{C}$ whose restriction to $C$ is trivial is parametrized by
the double quotient 
$$
\op{GL}_2(k[C])\backslash \left(\prod_{i=1}^s
  \op{GL}_2(k(C)_{v_i})\right)/\left(\prod_{i=1}^s
  \op{GL}_2(\mathcal{O}_{v_i})\right). 
$$
In the building, we additionally divide out the centers of the groups
$\op{GL}_2(k(C)_{v_i})$ which leads to the additional equivalence (rel
$\partial C$) on vector bundles. There is no difference between using
$\op{GL}_2(k(C))$ or $\op{GL}_2(k(C)_{v_i})$ in the above, the right
cosets of the corresponding maximal compact group are the same.
\end{remark}

Under the correspondence described above, the link of a point
$x=([L_1],\dots,[L_s])$ corresponding to the vector bundle
$\mathcal{E}=\mathcal{E}(L_1,\dots,L_s)$ is given by the vector
bundles $\mathcal{E}'$ which arise from $\mathcal{E}$ by  elementary
transformations: there is an edge between the vector bundles
$\mathcal{E}$ and $\mathcal{E}'$ if there is an embedding
$\mathcal{E}'\hookrightarrow\mathcal{E}$ of $\mathcal{E}'$ as a
subsheaf of $\mathcal{E}$ and the quotient $\mathcal{E}/\mathcal{E}'$
is a torsion $\mathcal{O}_{\overline{C}}$-module of length one,
concentrated in one of the points $P_1,\dots,P_s$. Similarly,
the $n$-simplices in the link of $x$ are given by the choice of $n+1$
such elementary transformations at $n+1$ pairwise distinct points in
$\{P_1,\dots,P_s\}$. 

The correspondence between vertices in the quotient of the building
and equivalence classes of vector bundles also allows the
identification of stabilizers of vertices  with automorphism groups of
vector bundles.  

\begin{proposition}
\label{prop:aut}
Let $x=([L_1],\dots,[L_s])$ be a point in the building
$\mathfrak{X}_C$ with associated vector bundle
$\mathcal{E}=\mathcal{E}(L_1,\dots,L_s)$. Then 
we have the following description of stabilizers, where $\Stab(x;G)$
denotes the stabilizer of the vertex $x$ in the group $G$:
\begin{enumerate}
\item 
$\Stab(x;\op{GL}_2(k[C]))\cong \Aut(\mathcal{E})\times_{k^\times}
k[C]^\times$, where $k^\times\to\Aut(\mathcal{E})$ denotes the
embedding of the homotheties, 
\item $\Stab(x;\op{PGL}_2(k[C]))\cong \Aut(\mathcal{E})/k^\times$
\item $\Stab(x;\op{SL}_2(k[C]))\cong \Aut(\mathcal{E},f)$, where
  $\Aut(\mathcal{E},f)$ denotes those automorphisms commuting with the
  map   $f:\det\mathcal{E}\rightarrow
  \mathcal{O}_{\overline{C}}(-P_1)^{m_1}\otimes\cdots\otimes
  \mathcal{O}_{\overline{C}}(-P_s)^{m_s}$ from \prettyref{prop:quotient}.
\end{enumerate}
The stabilizers of a cube in any of the above groups are given by
the intersection of the respective stabilizers of its vertices. The
intersections are taken in the groups $\op{GL}_2(K)$, $\op{PGL}_2(K)$
and $\op{SL}_2(K)$, respectively.
\end{proposition}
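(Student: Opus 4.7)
The plan is to use the description of vertices from \prettyref{prop:quotient} together with a concrete realization of $\Aut(\mathcal{E})$ as a subgroup of $\op{GL}_2(K)$. Since $\mathcal{E}(L_1,\dots,L_s)$ sits inside the constant sheaf $V = K^2$, its $\mathcal{O}_{\overline{C}}$-module automorphisms are
$$\Aut(\mathcal{E}) = \{g \in \op{GL}_2(k[C]) : gL_i = L_i \text{ for all } i\},$$
i.e., the matrices which additionally preserve each stalk at the $P_i$. Any such $g$ has $\det g \in k[C]^\times \cap \bigcap_i \mathcal{O}_{v_i}^\times = k^\times$, so the scalar matrices inside $\Aut(\mathcal{E})$ are precisely $k^\times$, justifying the fiber product over $k^\times$.

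I would dispatch part (3) first, since it is the cleanest. If $g \in \op{SL}_2(k[C])$ fixes $[L_i]$, write $gL_i = \lambda_i L_i$ with $\lambda_i \in K^\times$. Passing to determinants gives $\det(g) \cdot \wedge^2 L_i = \lambda_i^2 \wedge^2 L_i$, hence $v_i(\det g) = 2 v_i(\lambda_i)$. Since $\det g = 1$, each $v_i(\lambda_i) = 0$, so $\lambda_i \in \mathcal{O}_{v_i}^\times$ and $gL_i = L_i$. Thus $g \in \Aut(\mathcal{E})$, and triviality of the determinant is exactly compatibility with the fixed isomorphism $f$; the converse is immediate.

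For part (1), I would construct $\psi: \Aut(\mathcal{E}) \times_{k^\times} k[C]^\times \to \Stab(x;\op{GL}_2(k[C]))$ by $[\phi,u] \mapsto u\phi$. Well-definedness comes from the identification of homotheties, and the image manifestly lands in $\Stab(x)$. Injectivity is clear: if $u\phi = I$ then $\phi = u^{-1}I$ is a scalar matrix in $\Aut(\mathcal{E})$, forcing $u^{-1} \in k^\times$ and hence $[\phi,u]$ trivial. For surjectivity, given $g \in \Stab(x)$ with $gL_i = \lambda_i L_i$, I would produce $u \in k[C]^\times$ with $v_i(u) = v_i(\lambda_i)$, so that $u^{-1}g \in \Aut(\mathcal{E})$. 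Part (2) then drops out by quotienting the center $k[C]^\times \subset \op{GL}_2(k[C])$:
$$\Stab(x;\op{PGL}_2(k[C])) \cong (\Aut(\mathcal{E}) \times_{k^\times} k[C]^\times)/k[C]^\times \cong \Aut(\mathcal{E})/k^\times.$$

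The final statement on cube stabilizers is a general fact about group actions on cell complexes: an element fixes a cube iff it fixes each of its vertices, and all stabilizers embed into the ambient group $\op{GL}_2(K)$ (resp. $\op{SL}_2(K)$, $\op{PGL}_2(K)$) in which the intersection takes place. The main obstacle is the surjectivity step in part (1): producing $u \in k[C]^\times$ with $v_i(u) = v_i(\det g)/2$ amounts to a statement about principal divisors on $\overline{C}$ supported at $\{P_1,\dots,P_s\}$, and one must carefully extract it from the principal divisor $\op{div}(\det g)$ arising from the stabilizer condition on $g$.
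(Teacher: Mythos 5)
Your strategy is essentially the one the paper uses: realize $\Aut(\mathcal{E})$ as the subgroup of $\op{GL}_2(k[C])$ preserving each lattice $L_i$, map $[\phi,u]\mapsto u\phi$, get injectivity from the observation that the only scalars in $\Aut(\mathcal{E})$ are $k^\times$, obtain (2) by quotienting the center, and treat cube stabilizers via a fixed-vertex argument (the paper invokes the CAT(0) structure here; you should at least note that an element could a priori permute the vertices of a cube, so ``stabilizer of a cube'' is being read vertexwise). Your part (3) is actually \emph{more} complete than the paper's one-line ``restrict to determinant $1$'': the computation $v_i(\det g)=2v_i(\lambda_i)$, showing that a determinant-one element of the stabilizer already preserves the lattices themselves, is exactly right and stands independently of (1).

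The genuine gap is the one you flag yourself and do not close: surjectivity in (1). You need $u\in k[C]^\times$ with $v_i(u)=v_i(\lambda_i)$, i.e.\ the divisor $D=\sum_i v_i(\lambda_i)P_i$ must be principal, but the stabilizer condition only gives that $\det g\in k[C]^\times$ has divisor $2D$, so only $2D\sim 0$; equivalently, $g$ realizes an isomorphism $\mathcal{E}\cong\mathcal{E}\otimes\mathcal{O}_{\overline{C}}(-D)$ with $\mathcal{O}_{\overline{C}}(-D)$ possibly a nontrivial $2$-torsion bundle. This is not a technicality one can ``carefully extract'': it genuinely fails when $\Pic^0(\overline{C})$ has $2$-torsion supported at the $P_i$. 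For instance, let $\overline{C}$ be an elliptic curve, $P_1=O$, $P_2=T$ a nontrivial $2$-torsion point, $\mathcal{L}=\mathcal{O}(T-O)$, $\mathcal{E}=\mathcal{O}\oplus\mathcal{L}$, and $h\in K^\times$ with $\op{div}(h)=2T-2O$; then $g=\left(\begin{smallmatrix}0&1\\h^{-1}&0\end{smallmatrix}\right)$ lies in $\op{GL}_2(k[C])$, satisfies $gL_1=\pi_O L_1$ and $gL_2=\pi_T^{-1}L_2$, hence stabilizes $x$, but no unit on $C$ has divisor $O-T$, so $g$ is not of the form $u\phi$ (indeed $\Aut(\mathcal{E})\times_{k^\times}k[C]^\times$ is abelian here while the stabilizer is not). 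Be aware that the paper's own proof disposes of this step with the single unproved sentence that any stabilizing element factors as a lattice-preserving element times a central one, so you have put your finger on the actual weak point of the argument; but as written your proposal does not prove statement (1), and the statement itself requires a caveat (it is fine, e.g., when the subgroup of $\Pic(\overline{C})$ generated by the $P_i$ has no $2$-torsion, and part (3) — the only part used for the $\op{SL}_2$ results — is unaffected by your own direct argument).
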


\begin{proof}
(1) We first describe a homomorphism
$$\Aut(\mathcal{E})\times_{k^\times}k[C]^\times\rightarrow
\Stab(x;\op{GL}_2(k[C])).$$ 
Note that the bundle $\mathcal{E}$  comes with an explicit embedding
into $K^2$, in particular the restriction of $\mathcal{E}$ to
$C$ comes with a given trivialization. An automorphism
$\phi\in\Aut(\mathcal{E})$ can be restricted to a $k[C]$-linear 
automorphism of $\mathcal{E}|_C$, and the given trivialization allows
to write this as an element in $\op{GL}_2(k[C])$. This produces a
homomorphism $\Aut(\mathcal{E})\rightarrow \op{GL}_2(k[C])$ which
lands inside $\Stab(x;\op{GL}_2(k[C]))$ because the automorphism
preserves all the lattices. There is also a homomorphism 
$k[C]^\times\rightarrow \Stab(x;\op{GL}_2(k[C]))$  given by embedding
$k[C]^\times$ into the center of $\op{GL}_2(k[C])$ by 
$$
u\in k[C]^\times\mapsto\left(\begin{array}{cc}
u&0\\0&u\end{array}\right).
$$ 
This also preserves the lattice classes $[L_i]$, hence stabilizes
$x$. We can embed $k^\times\hookrightarrow \Aut(\mathcal{E})$ as the
homotheties, and $k^\times\hookrightarrow k[C]^\times$ as
constants. On $k^\times$, the homomorphisms given above agree, hence
we obtain the required homomorphism. This homomorphism is obviously
injective. 

By definition, the elements of $\Stab(x;\op{GL}_2(k[C]))$ preserve the
lattice classes $[L_i]$. Moreover, any element of $\op{GL}_2(k[C])$
preserves the standard lattice $\mathcal{O}_Q^2$ for
$Q\not\in\{P_1,\dots,P_s\}$. Therefore, any element of the stabilizer
which preserves the lattices $L_i$ (as opposed to just the lattice
classes) is in fact an automorphism of the vector bundle
$\mathcal{E}$. Now any element of the stabilizer can be factored as an
element which preserves the lattices $L_i$ and some central element,
hence the homomorphism is surjective.  

(2) is obtained by dividing out the center on both sides. On the
automorphism side, this is exactly dividing out the factor
$k[C]^\times$. On the stabilizer side, this is precisely the passage
from $\op{GL}_2$ to $\op{PGL}_2$. 

(3) is obtained by restricting to determinant $1$.  

Finally, the building is a CAT(0)-space, and $\op{GL}_2(k[C])$ acts
via isometries. Therefore, if an element stabilizes vertices, it also 
stabilizes the cube they span. Stabilizers of cubes can then be
computed by intersection of stabilizers of vertices inside the
respective group.
\end{proof}

\subsection{Recollection on Jacobians of curves}

We recall the Nagata exact sequence which describes units and Picard
groups of smooth affine curves. This will be needed for the
classification of (geometrically) split rank two bundles later on. 

\begin{lemma}
\label{lem:picc}
Let $\overline{C}$ be a smooth projective curve over a field $k$, 
let $P_1,\dots,P_s$ be closed points of degrees $d_i=[k(P_i):k]$, and
denote $C=\overline{C}\setminus\{P_1,\dots,P_s\}$. Then there is an exact
sequence  
$$
0\rightarrow \mathcal{O}(\overline{C})^\times=k^\times \rightarrow 
\mathcal{O}(C)^\times\rightarrow\bigoplus_{i=1}^s \mathbb{Z}
\stackrel{\phi}{\longrightarrow} \Pic(\overline{C})\rightarrow
\Pic(C)\rightarrow 0. 
$$
In particular:
\begin{enumerate}
\item
There is an exact sequence 
$$
0\rightarrow \Pic^0(C)\rightarrow
\Pic(C)\rightarrow\mathbb{Z}/\operatorname{gcd}(d_1,\dots,d_s)\mathbb{Z} 
\rightarrow 0,
$$
where
$\Pic^0(C)=\Pic^0(\overline{C})/(\operatorname{Im}\phi\cap
\Pic^0(\overline{C}))$.
\item
We have 
$$
k[C]^\times\cong k^\times\oplus\mathbb{Z}^{\dim\ker \phi}.
$$
\end{enumerate}
\end{lemma}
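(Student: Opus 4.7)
The plan is to recognize this as the standard Nagata localization exact sequence for the open immersion $C\hookrightarrow\overline{C}$. First I would pin down the maps explicitly: $f\in\mathcal{O}(C)^\times$ is sent to the tuple of valuations $(v_{P_i}(f))_i$; the map $\phi$ sends the $i$-th basis vector to the class $[\mathcal{O}_{\overline{C}}(P_i)]\in\Pic(\overline{C})$; and the final map is restriction along $C\hookrightarrow\overline{C}$. Exactness is then a routine verification term by term. Injectivity at $k^\times$ is formal. A unit on $C$ with vanishing valuations at every $P_i$ extends to a regular nowhere-vanishing function on $\overline{C}$, hence is constant by properness. A tuple $(n_i)$ lies in $\ker\phi$ exactly when $\sum n_iP_i=\operatorname{div}(f)$ for some $f\in K^\times$, and such $f$ is then a unit on $C$ with the prescribed valuations. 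A line bundle $\mathcal{L}$ is trivial on $C$ iff it admits a nowhere-vanishing section there, which viewed as a rational section of $\mathcal{L}$ over $\overline{C}$ has divisor supported on $\{P_1,\dots,P_s\}$, exhibiting $\mathcal{L}\cong\mathcal{O}_{\overline{C}}(\sum n_iP_i)=\phi(n_i)$. Surjectivity of the restriction map follows from extending Weil divisors across the missing points on the smooth curve $\overline{C}$.

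For item (1), the idea is to compose $\phi$ with the degree map $\deg\colon\Pic(\overline{C})\to\mathbb{Z}$, whose kernel is $\Pic^0(\overline{C})$. Since $\deg\circ\phi$ sends $(n_i)$ to $\sum n_id_i$, its image in $\mathbb{Z}$ is $\gcd(d_1,\dots,d_s)\mathbb{Z}$. Applying the snake lemma to the inclusion $\operatorname{Im}\phi\hookrightarrow\Pic(\overline{C})$ compared against the inclusion $\gcd(d_i)\mathbb{Z}\hookrightarrow\mathbb{Z}$ (via the degree map and its restriction) assembles the cokernels into the claimed short exact sequence $0\to\Pic^0(C)\to\Pic(C)\to\mathbb{Z}/\gcd(d_i)\to 0$, with $\Pic^0(C)$ identified with $\Pic^0(\overline{C})/(\operatorname{Im}\phi\cap\Pic^0(\overline{C}))$ as in the statement.

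For item (2), the initial segment of the Nagata sequence, together with exactness at $\bigoplus_{i=1}^s\mathbb{Z}$, gives a short exact sequence $0\to k^\times\to k[C]^\times\to\ker\phi\to 0$. Since $\ker\phi\subseteq\bigoplus_{i=1}^s\mathbb{Z}$ is a subgroup of a free abelian group, it is itself free of finite rank $r=\dim\ker\phi\leq s$; projectivity of $\mathbb{Z}^r$ splits the extension and yields $k[C]^\times\cong k^\times\oplus\mathbb{Z}^r$. I do not expect any real obstacle to this argument; the only geometric inputs are that nowhere-vanishing regular functions on a smooth proper curve are constant and that Weil divisors extend across a closed subscheme of a smooth variety, both entirely standard. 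The mildly nontrivial step is the bookkeeping in the snake lemma for item (1), which is purely formal once the degree image is computed.
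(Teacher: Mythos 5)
Your proposal is correct and follows essentially the same route as the paper: the paper simply cites the exact sequence as an instance of the localization sequence (referring to Rosen for an elementary proof) and then, exactly as you do, composes $\phi$ with the degree map to get (1) and splits off $\ker\phi$ as a free abelian group to get (2). The only difference is that you verify exactness of the Nagata sequence directly rather than citing it, and your sign convention for $\phi$ ($\mathcal{O}_{\overline{C}}(P_i)$ versus the paper's $\mathcal{O}_{\overline{C}}(-P_i)$) is immaterial.
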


\begin{proof}
The exact sequence is a special case of a localization sequence in
K-theory or Chow groups. A more elementary proof of the exact sequence
may be found e.g. in \cite[Proposition~1]{rosen:sunits}. 

The morphism $\phi:\mathbb{Z}^s\to \Pic(\overline{C})$ sends the basis
vector $e_i$ to the line bundle $\mathcal{O}_{\overline{C}}(-P_i)$,
cf. \cite[Proposition II.6.5]{hartshorne}. Moreover,
the degree map provides the standard exact sequence
$$
0\rightarrow \Pic^0(\overline{C})\rightarrow
\Pic(\overline{C})\rightarrow\mathbb{Z}\rightarrow 0.
$$
The composition $\deg\circ \phi:\mathbb{Z}^s\rightarrow\Pic(\overline{C})
\stackrel{\deg}{\longrightarrow}\mathbb{Z}$ maps $e_i$ to $\deg(P_i)$,
so its image is generated by
$\operatorname{gcd}(d_1,\dots,d_s)$. Thus
$\ker(\deg\circ\phi)\cong\mathbb{Z}^{s-1}$ consists exactly of the degree zero
divisors, and the map $\ker\phi\rightarrow\Pic(\overline{C})$ factors
through $\mathbb{Z}^{s-1}\rightarrow\Pic^0(\overline{C})$. The rank of
the latter map varies, according to possible rational equivalence
relations between the points $P_1,\dots,P_s$. This proves (1). The
splitting and rank computation in (2) is obvious. 
\end{proof}

\section{Decomposable bundles and parabolic subcomplex}  
\label{sec:pardef}

In this section, we use the identification of \prettyref{prop:aut} to
discuss the structure of stabilizer subgroups of vertices in
$\mathfrak{X}_C$. Recalling some basics about automorphism groups of
rank two vector bundles, we see that the existence of a
non-trivial non-unipotent automorphism implies that the vector bundle
is geometrically split. Based on this observation, we consider the
subcomplex of the quotient which consists exactly of the cells which
correspond to geometrically decomposable bundles. The 
equivariant cohomology of this complex is a version of Farrell-Tate
cohomology, in fact agrees with it for finite base fields (and finite
coefficients away from the characteristic).  

\subsection{Automorphisms and decomposability of vector bundles}

We first recall the structure of automorphism groups of rank two
vector bundles. These results in particular imply that decomposability
of a rank two vector bundle can be seen from the structure of the
automorphism group.  

\begin{proposition}
\label{prop:auto}
Let $k$ be a field, let
$\overline{C}$ be a smooth projective curve over $k$, and let
$\mathcal{E}$ be a rank two vector bundle over $\overline{C}$.
\begin{enumerate}
\item If $\mathcal{E}$ is geometrically indecomposable, then we have
  $\End(\mathcal{E}_{\overline{k}})\cong \overline{k}\oplus
  \mathcal{N}il$. Every 
  automorphism is a product of a central element and a unipotent
  element.
\item If $\mathcal{E}$ is geometrically decomposable, then there
  exists a separable extension $L/k$ with $[L:k]=2$ such that
  $\mathcal{E}_L\cong \mathcal{L}\oplus\mathcal{L}'$ with
  $\mathcal{L}$ and $\mathcal{L}'$ line bundles over
  $\overline{C}\times_kL$. There are three possibilities: 
\begin{enumerate}
\item If $\mathcal{L}\cong\mathcal{L}'$, then both $\mathcal{L}$ and
  $\mathcal{L}'$ as well as the splitting are defined over $k$. We have
  $\End(\mathcal{E})\cong \op{M}_2(k)$ and
  $\Aut(\mathcal{E})\cong \op{GL}_2(k)$. 
\item If  $\mathcal{L}\not\cong\mathcal{L}'$ and the splitting is
  defined over $k$, then we have 
  (assuming $\deg \mathcal{L}\geq\deg\mathcal{L}'$) 
$$
\Aut(\mathcal{E})\cong\left\{\left(\begin{array}{cc}
a & c \\ 
0& b
\end{array}\right),\mid a,b\in k, ab\neq 0,\,
c\in \op{H}^0(\overline{C},\mathcal{L}\otimes(\mathcal{L}')^{-1})\right\}.
$$
\item If $\mathcal{L}\not\cong\mathcal{L}'$ and the splitting is not
  defined over $k$, then the automorphism group of $\mathcal{E}$ is
  the Weil restriction of a torus: 
$$
\Aut(\mathcal{E})\cong
\op{R}_{L/k}^1(\mathbb{G}_m). 
$$
\end{enumerate}
\end{enumerate}
\end{proposition}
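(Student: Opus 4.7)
The proposition splits into the indecomposable case (1) and the geometrically decomposable case (2), and I would address them in turn.

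For (1), the input is the classical Krull--Schmidt fact that the endomorphism ring of an indecomposable coherent sheaf on a smooth projective curve over an algebraically closed field is a finite-dimensional local $\overline{k}$-algebra. Since $\overline{k}$ is algebraically closed and the ring is Artinian, the residue field of this local ring is $\overline{k}$ itself, so $\End(\mathcal{E}_{\overline{k}})$ admits a canonical decomposition $\overline{k}\oplus\mathcal{N}il$ with $\mathcal{N}il$ the nilpotent Jacobson radical. Any unit then factors uniquely as $\lambda(1+\lambda^{-1}n)$ with $\lambda\in\overline{k}^\times$ and $n\in\mathcal{N}il$, displaying the required central-unipotent factorization.

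For (2), I would first fix a geometric decomposition $\mathcal{E}_{\overline{k}}\cong\mathcal{L}\oplus\mathcal{L}'$ and then locate a finite separable extension over which both summands are defined. Since $\op{Gal}(\overline{k}/k)$ permutes the unordered pair $\{\mathcal{L},\mathcal{L}'\}$, the stabilizer has index at most $2$, and the splitting already descends to some separable $L/k$ of degree dividing $2$; the three sub-cases then correspond to whether $\mathcal{L}\cong\mathcal{L}'$ over $L$ and whether the splitting descends further to $k$. In case (a), where $\mathcal{L}\cong\mathcal{L}'$, one has $\End(\mathcal{E}_L)\cong \op{M}_2(L)$, and a descent argument using that the associated torsor of splittings is an $\op{GL}_n$-torsor (hence trivial by Hilbert~90) shows that $\mathcal{L}$, $\mathcal{L}'$ and the splitting all descend to $k$, yielding $\End(\mathcal{E})\cong \op{M}_2(k)$ and $\Aut(\mathcal{E})\cong \op{GL}_2(k)$. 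In case (b), a direct decomposition $\End(\mathcal{E}) = \End(\mathcal{L})\oplus\End(\mathcal{L}')\oplus \Hom(\mathcal{L},\mathcal{L}')\oplus\Hom(\mathcal{L}',\mathcal{L})$ with $\End(\mathcal{L})=\End(\mathcal{L}')=k$, combined with the degree-based vanishing $\Hom(\mathcal{L},\mathcal{L}')=\op{H}^0(\overline{C},\mathcal{L}^{-1}\otimes\mathcal{L}')=0$ (non-positive degree, non-trivial when the degree is zero), gives the asserted upper-triangular form with $c\in\op{H}^0(\overline{C},\mathcal{L}\otimes(\mathcal{L}')^{-1})$, with invertibility forcing $ab\neq 0$.

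For case (c), the splitting descends only to a quadratic separable extension $L/k$, whose nontrivial Galois generator $\sigma$ interchanges the two summands. Over $L$ the summands are conjugate of equal degree but non-isomorphic, so both cross-Hom groups vanish as in (b) and $\Aut(\mathcal{E}_L)\cong L^\times\times L^\times$ is a diagonal torus. Galois descent then identifies $\Aut(\mathcal{E})$ with the subgroup of pairs $(a,b)$ satisfying $b=\sigma(a)$, which is to be described functorially on an arbitrary $k$-algebra $R$ in terms of $(L\otimes_k R)^\times$ to recover the indicated Weil restriction of $\mathbb{G}_m$. I expect the main technical obstacle to sit in case (a), where carefully descending the line-bundle summands themselves (not merely the matrix algebra structure on $\End(\mathcal{E})$) from $L$ to $k$ requires pinning down the relevant cohomological triviality; and in (c), where the Galois twist must be tracked at the level of group schemes rather than only $k$-points in order to obtain precisely the stated twisted form of the torus.
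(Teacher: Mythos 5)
Your reconstruction follows, in substance, the route the paper simply outsources to its references: part (1) is Atiyah's Krull--Schmidt/Fitting argument (the paper cites \cite{atiyah:ks} and \cite{atiyah:connection}), and part (2) is the case analysis of \cite[p.~101]{serre:book}. Parts (1) and (2)(b) are correct as you give them, as is the descent to a separable quadratic splitting field. In (2)(c), note that your Galois fixed-point computation yields $\End(\mathcal{E})\cong L$ and hence the \emph{full} Weil restriction $\op{R}_{L/k}(\mathbb{G}_m)\cong L^\times$; the norm-one torus $\op{R}^1_{L/k}(\mathbb{G}_m)$ written in the statement is what survives the condition $\det=1$ (the stabilizer in $\op{SL}_2$ rather than all of $\Aut(\mathcal{E})$), so your answer is the correct one for the automorphism group as stated and the discrepancy lies in the statement's notation, not in your argument.

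The one genuine gap is the descent step in case (2)(a). The set of isomorphisms $\mathcal{L}_0^{\oplus 2}\to\mathcal{E}$ is a $\op{GL}_2$-torsor only \emph{after} you know that the summand $\mathcal{L}$ descends to some $\mathcal{L}_0$ over $k$, and that is precisely what needs proof: a Galois-stable line bundle on $\overline{C}_L$ need not come from $\overline{C}$, the obstruction being a Brauer class. Equivalently, $\End(\mathcal{E})$ is a priori only a $k$-form of $\op{M}_2$, i.e.\ a quaternion algebra split by $L$, classified by $H^1(\op{Gal}(L/k),\op{PGL}_2(L))$, which Hilbert~90 does not kill. Over a general field the obstruction can be nonzero: on a pointless conic $\overline{C}=\op{SB}(Q)$ there is an indecomposable rank two bundle with $\mathcal{E}_{\overline{k}}\cong\mathcal{O}(1)^{\oplus 2}$ and $\End(\mathcal{E})\cong Q$ a division algebra, so neither the summands nor the splitting descend. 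Thus (2)(a) as literally stated requires an extra hypothesis that forces this Brauer class to vanish --- for instance $k$ algebraically closed (which is all the paper ever uses), or $\mathcal{E}$ generically trivial on a curve with a rational point, where $\End(\mathcal{E})\otimes_k k(C)$ embeds into $\op{M}_2(k(C))$ and $\op{Br}(k)\to\op{Br}(k(C))$ is injective. If you want to prove (2)(a) in the generality claimed, you must identify and dispose of this obstruction rather than invoke Hilbert~90 for $\op{GL}_n$.
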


\begin{proof}
(1) is due to Atiyah. In a complex analytic setting, it can be found as  
Proposition 15, its reformulation and Proposition 16 in
\cite{atiyah:connection}. The algebraic result is proved 
similarly using \cite{atiyah:ks}.

(2) can be found in \cite[p. 101]{serre:book}. 
\end{proof}

\begin{corollary}
A rank two vector bundle is geometrically split if and only if there
exists a non-central non-unipotent automorphism. 
\end{corollary}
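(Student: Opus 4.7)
The plan is to read off the corollary from \prettyref{prop:auto} by direct case analysis. I interpret \emph{non-central non-unipotent} as meaning an automorphism that cannot be written as a product $z\cdot u$ of a central element $z$ and a unipotent element $u$; equivalently, in the rank two setting, an element of $\Aut(\mathcal{E}_{\overline{k}})$ possessing two distinct eigenvalues.

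For the direction $(\Leftarrow)$ I argue by contrapositive. Assume $\mathcal{E}$ is geometrically indecomposable. By \prettyref{prop:auto}(1), $\End(\mathcal{E}_{\overline{k}}) \cong \overline{k} \oplus \mathcal{N}il$, and every automorphism of $\mathcal{E}_{\overline{k}}$ has the form $\lambda(1+n)$ with $\lambda \in \overline{k}^\times$ and $n$ nilpotent. Such an element has characteristic polynomial $(X-\lambda)^2$, hence is a product of the central scalar $\lambda\cdot\op{id}$ and the unipotent automorphism $1+n$. The same therefore holds for all $k$-rational automorphisms, so no non-central non-unipotent automorphism exists.

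For the direction $(\Rightarrow)$ I go through the three subcases of \prettyref{prop:auto}(2) and exhibit an explicit witness. In case (a), $\Aut(\mathcal{E}) \cong \op{GL}_2(k)$ obviously contains diagonal matrices $\op{diag}(a,b)$ with $a \neq b$ in $k^\times$. In case (b), the explicit presentation of $\Aut(\mathcal{E})$ again contains such diagonal matrices as the $c=0$ part of the upper triangular subgroup. In case (c), $\Aut(\mathcal{E}) \cong \op{R}_{L/k}^1(\Gm)$ is a non-trivial one-dimensional anisotropic torus, which after base change to $L$ becomes the diagonal split torus in $\op{SL}_2$; any element of the form $\op{diag}(a, a^{-1})$ with $a \in L^\times$, $N_{L/k}(a)=1$, $a^2 \neq 1$ provides a non-central element with two distinct eigenvalues, hence the required automorphism. (Such elements exist already in $L^\times$ whenever $L\neq k$: for infinite $k$ the norm-one torus has infinitely many $k$-points, and for finite $k$ one has $|\op{R}_{L/k}^1(\Gm)(k)| = |k|+1 \geq 3$.)

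The main obstacle is merely fixing the correct reading of \emph{non-central non-unipotent} so that it is compatible with the $Z\cdot U$-structure appearing in \prettyref{prop:auto}(1); once this is done, all arguments reduce to writing down explicit matrices (or reading off characteristic polynomials) from the preceding proposition.
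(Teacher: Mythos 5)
Your proof is correct and follows the same route the paper intends: the corollary is stated as an immediate consequence of \prettyref{prop:auto}, read off exactly by the case analysis you perform, and your reading of ``non-central non-unipotent'' as ``not a product of a central and a unipotent element'' (equivalently, non-central semisimple part) is indeed the one required for part (1) of that proposition to give the indecomposable direction. The only caveat is that your explicit diagonal witnesses in cases (a) and (b) need $|k^\times|\geq 2$ (and case (b) genuinely fails over $\mathbb{F}_2$ unless one interprets the automorphism as living over $\overline{k}$), but this is harmless in the paper's setting of infinite or algebraically closed base fields and is the same implicit assumption the paper makes.
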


\begin{remark}
A similar pattern appears for the  finite subgroups containing odd
order elements in arithmetic groups $\op{SL}_2(\mathcal{O}_{K,S})$:
case (a) is a dihedral group, case (b) is a diagonalizable finite
cyclic group, and case (c) a non-diagonalizable finite cyclic group.  
\end{remark}

\subsection{Parabolic subcomplex and an exact sequence}
In this paper, we are interested in the subcomplex of $\mathfrak{X}_C$
containing the decomposable bundles. More precisely, we consider the
subcomplex of $\mathfrak{X}_C$ containing exactly the cells which have
a non-central non-unipotent element in their stabilizer. This is
justified by the fact that the unipotent radicals 
of stabilizers will not be visible in homology with finite
coefficients away from the characteristic. 

\begin{lemma}
Let $k$ be a field, let $\overline{C}$ be a smooth projective curve
over $k$, and denote $C=\overline{C}\setminus\{P_1,\dots,P_s\}$. 
The subset of the building $\mathfrak{X}_C$ consisting of the cells
with non-unipotent stabilizer  is in fact a subcomplex. The action of
$\op{GL}_2(k[C])$ resp. $\op{SL}_2(k[C])$ on the building restricts to
an action on this subcomplex.  
\end{lemma}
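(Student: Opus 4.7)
The plan is to verify two things: (i) the subset of cells with non-unipotent stabilizer is closed under taking faces (so it is indeed a subcomplex), and (ii) the group action preserves this property (so it restricts). Both are essentially formal consequences of \prettyref{prop:aut}, which asserts that the stabilizer of a cube is the intersection of the stabilizers of its vertices inside the ambient group $\op{GL}_2(K)$, $\op{PGL}_2(K)$, or $\op{SL}_2(K)$.

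For closure under faces, I would argue as follows. Let $\sigma$ be a cube of $\mathfrak{X}_C$ and suppose some non-unipotent element $g\in \op{GL}_2(k[C])$ lies in $\Stab(\sigma)$. By \prettyref{prop:aut}, $g$ lies in the stabilizer of each vertex of $\sigma$. Any face $\tau\subseteq\sigma$ has a subset of the vertices of $\sigma$, so by applying \prettyref{prop:aut} again in the other direction, $g\in\Stab(\tau)$. Hence $\tau$ also carries a non-unipotent element in its stabilizer. The same reasoning works verbatim for $\op{SL}_2(k[C])$.

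For equivariance, I would use that the property of being unipotent is conjugation-invariant: an element $h$ of $\op{GL}_2(K)$ is unipotent iff $h-\id$ is nilpotent, a condition preserved by conjugation. Thus if $h\in\Stab(\sigma)$ is non-unipotent and $g\in\op{GL}_2(k[C])$ is arbitrary, then $ghg^{-1}\in\Stab(g\sigma)$ is again non-unipotent, so $g\sigma$ lies in the subcomplex. This is a routine verification rather than a genuine obstacle; the only mild subtlety is making sure the notion of ``unipotent'' is understood in the ambient linear group (so that it is stable under conjugation), which is guaranteed by the embeddings of \prettyref{sec:buildings}.
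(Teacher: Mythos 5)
Your proof is correct and follows the same route as the paper: the paper likewise observes that the stabilizer of a cube is the intersection of the stabilizers of its vertices (so faces have larger stabilizers and inherit any non-unipotent element), and that conjugate cubes have conjugate stabilizers, with unipotency being conjugation-invariant. Your write-up just spells out the two steps in slightly more detail than the paper does.
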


\begin{proof}
As mentioned before, the stabilizers of $n$-cubes are the
intersections of the stabilizers of their vertices. In particular, if
an $n$-cube has non-unipotent stabilizer, then so do all its faces. 

If two cubes are conjugate by the $\op{GL}_2(k[C])$- or the
$\op{SL}_2(k[C])$-action, then so are their stabilizers. 
\end{proof}

\begin{remark}
Note that this subcomplex is not necessarily a full subcomplex. Even
if all the vertices of a cube have non-unipotent stabilizers, the cube
does not necessarily have a non-unipotent stabilizer. It frequently
happens that the automorphism groups of the vertices of a cube have
trivial intersection inside $\op{PGL}_2(K)$. 
\end{remark}

\begin{definition}
\label{def:pc}
\begin{itemize}
\item
The subcomplex of the building consisting of cells with non-unipotent
stabilizer in $\op{PGL}_2(k[C])$ is denoted by $\mathfrak{P}_C$ and is
called the \emph{parabolic subcomplex}. 
\item The quotient of the building $\mathfrak{X}_C$ modulo the subcomplex
  $\mathfrak{P}_C$ is denoted by $\mathfrak{U}_C$ and is called the
  \emph{unknown quotient}.
\end{itemize}
\end{definition}

\begin{remark}
The terminology ``parabolic subcomplex'' is supposed to underline that
the stabilizers in the subcomplex are strongly related to parabolic
(or better parahoric) subgroups of $\op{GL}_2(K)$ or $\op{SL}_2(K)$.  

The terminology ``unknown quotient'' is supposed to underline our
complete lack of knowledge of its structure, homology etc. There are
strong links to cuspidal phenomena in number theory: the quotient
$\mathfrak{U}_C$ contains the information away from infinity. In the
case of finite base fields, the unknown quotients contains most of the
compactly supported cohomology. However, ``cuspidal quotient'' seems
inappropriate terminology, overloaded as the term cuspidal is in
number theory and representation theory.

Exploration of the structure of the great unknown will be the subject
of further papers in the series.
\end{remark}

It now follows from the definition that we have an exact sequence of
chain complexes.

\begin{proposition}
\label{prop:split}
Let $k$ be a field, let $\overline{C}$ be a smooth projective curve
over $k$, and denote $C=\overline{C}\setminus\{P_1,\dots,P_s\}$. 
\begin{enumerate}
\item
Denote by $\Gamma$ one of the linear groups $\op{(P)GL}_2(k[C])$ or
$\op{(P)SL}_2(k[C])$.   
There is an exact sequence of $\Gamma$-chain complexes
$$
0\rightarrow \op{C}_\bullet(\mathfrak{P}_C)\rightarrow
\op{C}_\bullet(\mathfrak{X}_C) \rightarrow
\op{C}_\bullet(\mathfrak{U}_C)\rightarrow 0. 
$$
In particular, there is a long exact sequence of
$\Gamma$-equivariant homology groups:
$$
\cdots\rightarrow
\op{H}_{\bullet+1}^{\Gamma}(\op{C}_\bullet(\mathfrak{U}_C))\rightarrow
\op{H}_\bullet^{\Gamma}(\op{C}_\bullet(\mathfrak{P}_C))\rightarrow
\op{H}_\bullet^{\Gamma}(\op{C}_\bullet(\mathfrak{X}_C)) \rightarrow
\op{H}_\bullet^{\Gamma}(\op{C}_\bullet(\mathfrak{U}_C))\rightarrow\cdots 
$$
Since $\mathfrak{X}_C$ is contractible, we also have 
$\op{H}_\bullet^{\Gamma}(\op{C}_\bullet(\mathfrak{X}_C))\cong
\op{H}_\bullet(\Gamma)$. 
\item Assume now that $\Gamma$ is one of groups
  $\op{PGL}_2(k[C])$ or $\op{(P)SL}_2(k[C])$, and let $\ell$ be an odd
  prime different from the characteristic of $k$. For $i>s$, we have 
  $\op{H}_i^{\Gamma}(\op{C}_\bullet(\mathfrak{U}_C),\mathbb{Z}/\ell)=0$. In
  particular, we have  isomorphisms 
$$
\op{H}_i^{\Gamma}(\op{C}_\bullet(\mathfrak{P}_C))\cong
\op{H}_i(\Gamma,\mathbb{Z}/\ell). 
$$
\end{enumerate}
\end{proposition}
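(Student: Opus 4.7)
My approach splits according to the two parts of the statement. Part (1) is essentially formal from \prettyref{def:pc}, which defines $\mathfrak{U}_C$ as the quotient $\mathfrak{X}_C / \mathfrak{P}_C$; since the lemma preceding \prettyref{def:pc} shows that $\Gamma$ preserves the subcomplex $\mathfrak{P}_C$, the displayed short exact sequence of $\Gamma$-chain complexes is immediate, and applying the hyperhomology functor $\op{H}_\bullet^\Gamma$ yields the long exact sequence. For the identification $\op{H}_\bullet^\Gamma(\mathfrak{X}_C) \cong \op{H}_\bullet(\Gamma)$, I would invoke contractibility of each Bruhat--Tits tree $\mathfrak{T}_i$ together with stability of contractibility under finite products.

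For part (2) the plan is to run the isotropy spectral sequence
\[
E^1_{p,q} = \bigoplus_{\sigma} \op{H}_q\bigl(\Stab_\Gamma(\sigma), \mathbb{Z}/\ell\bigr) \Longrightarrow \op{H}_{p+q}^\Gamma(\mathfrak{U}_C, \mathbb{Z}/\ell),
\]
with $\sigma$ ranging over $\Gamma$-orbits of $p$-cubes of $\mathfrak{U}_C$, and to prove that it is concentrated in the strip $0 \leq p \leq s$, $q = 0$. The horizontal bound is immediate since $\mathfrak{X}_C$ is a product of $s$ trees and hence has cubical dimension at most $s$. The vertical bound is the substantive content: by definition of $\mathfrak{U}_C$, every cell has unipotent stabilizer in $\op{PGL}_2(k[C])$, and via \prettyref{prop:auto}(1) together with the stabilizer formulas of \prettyref{prop:aut}, the stabilizer in any of the three groups under consideration is either this unipotent group itself or an extension of it by a subquotient of $\{\pm I\}$. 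The unipotent part is an abelian subgroup of $(K,+)$ and in particular a $k$-vector space, hence uniquely $\ell$-divisible because $\ell \neq \op{char}(k)$, so its mod $\ell$ group homology is concentrated in degree~$0$; the residual finite factor is $2$-torsion while $\ell$ is odd, so its contribution is likewise concentrated in degree~$0$. K\"unneth (or Lyndon--Hochschild--Serre) then gives $E^1_{p,q} = 0$ for all $q > 0$. Convergence forces $\op{H}_n^\Gamma(\mathfrak{U}_C, \mathbb{Z}/\ell) = 0$ for $n > s$, and substitution into the long exact sequence of part (1) yields the asserted isomorphism since both neighbouring terms $\op{H}_i^\Gamma(\mathfrak{U}_C)$ and $\op{H}_{i+1}^\Gamma(\mathfrak{U}_C)$ vanish in the range $i > s$.

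The step where I expect the most care is the description of higher-dimensional cube stabilizers. For a single vertex the unipotent description is direct from \prettyref{prop:auto}(1), but the stabilizer of an $n$-cube is, by the intersection clause at the end of \prettyref{prop:aut}, an intersection of vertex stabilizers inside $\op{GL}_2(K)$ or $\op{SL}_2(K)$. I must verify that such an intersection of ``central-times-unipotent'' groups retains a description as finite central times a $k$-vector subspace of $(K,+)$, so that the unique $\ell$-divisibility argument still applies at every cell. Once this bookkeeping is under control, the remainder is a clean application of the isotropy spectral sequence to a complex whose cell stabilizers happen to be mod-$\ell$-acyclic in positive degree.
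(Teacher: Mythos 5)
Your proposal is correct and follows essentially the same route as the paper: part (1) is read off from the definition of $\mathfrak{U}_C$, and part (2) runs the isotropy spectral sequence for $\mathfrak{U}_C$, using unique $\ell$-divisibility of the unipotent stabilizers (plus the order-two center in the $\op{SL}_2$ case) to concentrate the $E^1$-page in $q=0$ and the dimension bound $\dim\mathfrak{X}_C=s$ to kill everything above degree $s$. The extra care you flag about cube stabilizers is already absorbed by \prettyref{def:pc}, which imposes the unipotence condition cell by cell rather than only on vertices.
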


\begin{proof}
(1) is barely more than the definition of $\mathfrak{P}_C$ and
$\mathfrak{U}_C$. 

(2) By definition, the cells in $\mathfrak{U}_C$ have unipotent
stabilizer in $\op{PGL}_2(k[C])$ and $\op{PSL}_2(k[C])$, hence they
are uniquely $\ell$-divisible groups by assumption.  The same holds
for the stabilizers in $\op{SL}_2(k[C])$ since the center has order
$2$. 
In particular, the isotropy spectral sequence computing
$\op{H}^\Gamma_\bullet(\op{C}_\bullet(\mathfrak{U}_C),\mathbb{Z}/\ell)$ is
concentrated in the line $q=0$ since only the groups
$\op{H}_0(\Gamma_\sigma,\mathbb{Z}/\ell_\sigma)$ are non-trivial. The
spectral sequence therefore degenerates at the $E^2$-term and
converges to
$$
\op{H}_\bullet^\Gamma(\op{C}_\bullet(\mathfrak{U}_C),\mathbb{Z}/\ell)\cong
\op{H}_\bullet(\op{C}_\bullet(\Gamma\backslash\mathfrak{U}_C),\mathbb{Z}/\ell).
$$
The quotient only has cells in dimension $\leq s$, since the building
only has dimension $s$. This proves the claim.
\end{proof}

\begin{remark}
The exact sequence closely resembles the dual of the
  long exact sequence relating group cohomology to Farrell-Tate
  cohomology and the homology of the Steinberg module for groups of
  finite virtual cohomological dimension, 
  cf. \cite{brown:book}: 
$$
\cdots\rightarrow \widehat{H}^{\bullet-1}(\Gamma)\rightarrow
H_{n-\bullet}(\Gamma,\operatorname{St}^{\Gamma})\rightarrow
H^\bullet(\Gamma)\rightarrow 
\widehat{H}^\bullet(\Gamma)\rightarrow\cdots
$$
For finite base fields $k$, the groups $\op{GL}_2(k[C])$ and
$\op{SL}_2(k[C])$ have finite virtual $\ell$-cohomological dimension
if $\ell\neq\op{char}k$ and the long exact sequence of
\prettyref{prop:split} is the (dual of the) one for Farrell-Tate
cohomology. For infinite base fields, the groups $\op{GL}_2(k[C])$ and
$\op{SL}_2(k[C])$ do no longer have finite virtual cohomological
dimension. Nevertheless, we can see the homology of the parabolic
subcomplex as a replacement of Farrell-Tate homology in this setting. 
\end{remark}

\begin{definition}
Let $k$ be a field, let $\overline{C}$ be a smooth projective curve
over $k$, and denote $C=\overline{C}\setminus\{P_1,\dots,P_s\}$. 
Denote by $\Gamma$ one of the linear groups $\op{(P)GL}_2(k[C])$ or
$\op{(P)SL}_2(k[C])$.   
The $\Gamma$-equivariant homology of the parabolic subcomplex
$\mathfrak{P}_C$ is called the \emph{parabolic homology of $\Gamma$}. 
\end{definition}

\begin{remark}
There is a version of Tate cohomology for arbitrary groups, due to
Mislin \cite{mislin:tate}, which agrees with Farrell-Tate cohomology
for groups of finite virtual cohomological dimension. It is not clear
if Mislin's version of Tate cohomology agrees with the equivariant
cohomology of the parabolic subcomplex above. The difficulty is mainly
in the different definitions: while Mislin's version of Tate
cohomology is defined via killing projectives in the derived category
of $\mathbb{Z}[\Gamma]$-modules, the parabolic subcomplex has only a
very concrete geometric definition. 

A first step in comparing the two
cohomologies would be to compute the parabolic homology of a
projective $\mathbb{Z}[\Gamma]$-module which is possible using the
methods of the present paper. It could probably be shown that a
projective $\mathbb{Z}[\op{SL}_2(k[C])]$-module whose restriction to
the constant group rings $\mathbb{Z}[\op{SL}_2(k)]$ is injective, has
trivial parabolic homology. This would at least establish a morphism
from a ``relative'' Mislin-Tate cohomology to parabolic cohomology. 
\end{remark}

\subsection{Functoriality}
The parabolic subcomplex is also functorial with respect to morphisms
of curves: 

\begin{proposition}
\label{prop:functor1}
Let $f:\overline{D}\rightarrow \overline{C}$ be a finite morphism of
smooth projective curves over $k$, let $P_1,\dots,P_s$ be
points on $\overline{C}$, and let $Q_1,\dots,Q_t$ be points on
$\overline{D}$ not in the preimage of the $P_i$. 
Set $C=\overline{C}\setminus\{P_1,\dots,P_s\}$ and
$D=\overline{D}\setminus(\{f^{-1}(\{P_1,\dots,P_s\}) \cup
\{Q_1,\dots,Q_t\})$. 

(After possible subdivision of $\mathfrak{X}_C$) there is a natural
morphism $f^\ast:\mathfrak{X}_C\to\mathfrak{X}_D$ which induces 
a morphism $f^\ast:\mathfrak{P}_C\to\mathfrak{P}_D$ of parabolic
subcomplexes. The morphism $f^\ast:\mathfrak{P}_C\to\mathfrak{P}_D$
is equivariant with respect to the natural group homomorphism
$\op{GL}_2(k[C])\to \op{GL}_2(k[D])$. This assignment is functorial.

There are induced morphisms
$f^\ast:\op{GL}_2(k[C])\backslash\mathfrak{X}_C\to
\op{GL}_2(k[D])\backslash\mathfrak{X}_D$ (and similarly for
$\op{SL}_2$ in place of $\op{GL}_2$ resp. $\mathfrak{P}$ in place of
$\mathfrak{X}$). In terms of the vector bundle interpretation of the
quotient, these morphisms are identified with pullback of vector
bundles. 
\end{proposition}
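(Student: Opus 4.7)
The plan is to construct $f^*$ via the vector bundle interpretation of the quotient in \prettyref{prop:quotient} and then apply pullback of bundles. First, since $f$ is finite and the removed locus of $D$ contains $f^{-1}(\{P_1,\dots,P_s\})$, restriction of regular functions induces a ring embedding $k[C]\hookrightarrow k[D]$, which in turn gives the group homomorphism $\op{GL}_2(k[C])\to\op{GL}_2(k[D])$ (and analogously for $\op{SL}_2$). On a vertex $x=([L_1],\dots,[L_s])$ of $\mathfrak{X}_C$ corresponding to the bundle $\mathcal{E}=\mathcal{E}(L_1,\dots,L_s)$ via \prettyref{prop:quotient}, I would set $f^*(x)$ to be the vertex of $\mathfrak{X}_D$ corresponding to the pullback $f^*\mathcal{E}$: concretely, the stalk of $f^*\mathcal{E}$ at $R\in f^{-1}(P_i)$ is the lattice $L_i\otimes_{\mathcal{O}_{v_{P_i}}}\mathcal{O}_{v_R}$, while at each $Q_j$ it is the standard lattice $\mathcal{O}_{v_{Q_j}}^2$, since $Q_j\not\in f^{-1}(\{P_i\})$ lies in $f^{-1}(C)$ and the pullback is trivial there.

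For higher cells, I would observe that an edge in $\mathfrak{T}_{v_{P_i}}$ given by a primitive lattice inclusion $\pi_{P_i}L'\subset L\subset L'$ pulls back, using $\pi_{P_i}=u\pi_{v_R}^{e_R}$ up to a unit, to a geodesic path of length equal to the ramification index $e_R$ in $\mathfrak{T}_{v_R}$; the factors at the $Q_j$ remain constant along the image. Consequently, the natural map of geometric realizations is piecewise linear and becomes cellular after refining each Bruhat-Tits factor $\mathfrak{T}_{v_{P_i}}$ of $\mathfrak{X}_C$ sufficiently finely so that the competing ramification indices $e_R$ for $R\in f^{-1}(P_i)$ are simultaneously respected. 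Equivariance under $\op{GL}_2(k[C])\to\op{GL}_2(k[D])$ and functoriality $(f\circ g)^*=g^*\circ f^*$ (understood up to further refinement of subdivisions) are then both formal consequences of the corresponding properties of pullback of vector bundles.

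To see $f^*(\mathfrak{P}_C)\subseteq\mathfrak{P}_D$, I would invoke the corollary to \prettyref{prop:auto}: a cell of $\mathfrak{X}_C$ lies in $\mathfrak{P}_C$ if and only if its associated rank two bundle is geometrically decomposable. Since pullback commutes with both direct sums and base change to $\overline{k}$, geometric decomposability is preserved by $f^*$; equivalently, under the isomorphism of \prettyref{prop:aut} a non-unipotent automorphism of $\mathcal{E}$ pulls back to an automorphism of $f^*\mathcal{E}$ which remains non-unipotent, because unipotency is a polynomial condition on matrix entries preserved by any ring map. The identification of the induced quotient map with pullback of vector bundles is then immediate from \prettyref{prop:quotient}, since $f^*$ was defined precisely so as to implement bundle pullback on the representing bundles.

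The main obstacle, and the source of the ``after possible subdivision'' parenthetical in the statement, is the combinatorics of ramification: distinct preimages $R\in f^{-1}(P_i)$ may carry distinct ramification indices $e_R$, so no uniform integer subdivision of $\mathfrak{X}_C$ renders $f^*$ simplicial simultaneously for all target factors. One therefore either subdivides each Bruhat-Tits factor of $\mathfrak{X}_C$ finely enough, or interprets the assertion at the level of PL maps of geometric realizations; neither choice affects the subsequent computations of equivariant homology that the paper is aiming at.
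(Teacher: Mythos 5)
Your proposal is correct and follows essentially the same route as the paper: the paper phrases the vertex map via the double-coset description of the building (sending the factor at $P_i$ diagonally to the factors at $f^{-1}(P_i)$), which is exactly your bundle-pullback description, and it likewise resolves the cellular issue by subdivision and verifies $f^*(\mathfrak{P}_C)\subseteq\mathfrak{P}_D$ by noting that non-central non-unipotent stabilizer elements have non-central non-unipotent images under the injective map $\op{GL}_2(k[C])\to\op{GL}_2(k[D])$. The only cosmetic difference is that the paper emphasizes the spreading of one edge over the $g=|f^{-1}(P_i)|$ tree factors while you emphasize the ramification indices $e_R$; both effects are present and both treatments accommodate them.
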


\begin{proof}
The morphism of curves induces a morphism of function fields
$f^\ast:k(C)\to k(D)$. This is compatible with the valuations and
hence induces a morphism
$$
\prod_{i=1}^s\op{GL}_2(k(C))/(\prod_{i=1}^s\op{GL}_2(\mathcal{O}_{P_i})\cdot
k(C)^\times) \to 
\prod_{i=1}^{\tilde{s}+t}\op{GL}_2(k(D))/
(\prod_{i=1}^{\tilde{s}+t}\op{GL}_2(\mathcal{O}_{Q_i})\cdot k(D)^\times),
$$
where the factor $\op{GL}_2(k(C))$ for the point $P_i$ is mapped
diagonally to the factors for $f^{-1}(P_i)$. 
This morphism is obviously functorial. It is also obviously
equivariant with respect to $\op{GL}_2(k[C])\to\op{GL}_2(k[D])$, both
groups embedded diagonally into $\op{GL}_2(k(C))^s$ and
$\op{GL}_2(k(D))^{\tilde{s}+t}$, respectively. 

The cubical structure is slightly more complicated to take care of: a 
matrix for an elementary transformation 
$$
\left(\begin{array}{cc}
\pi_i^{a_i}&\alpha_i\\0&1
\end{array}\right)\textrm{ is mapped to }
\left(\begin{array}{cc}
f^\ast(\pi_i)^{a_i}&f^\ast(\alpha_i)\\0&1
\end{array}\right)^g,
$$
and so maps a $1$-cube to the diagonal of a $g$-cube, where $g$ is
cardinality of $f^{-1}(P_i)$. After suitably subdividing, the above
map on vertices extends to a morphism of cell complexes. 
This proves all the claims for the morphism
$f^\ast:\mathfrak{X}_C\to\mathfrak{X}_D$. For the parabolic
subcomplexes, it then suffices to note that if a cell
$\sigma\in\mathfrak{X}_C$ has a non-unipotent non-central subgroup in
the stabilizer, then the same is true for its image: any non-unipotent
non-central element in the stabilizer in $\op{GL}_2(k[C])$ will have
non-unipotent non-central image in $\op{GL}_2(k[D])$. Therefore
$f^\ast$ restricts to the parabolic subcomplexes, and all the
auxiliary properties are inherited.
\end{proof}

\section{Local structure of parabolic subcomplexes}
\label{sec:local}

In this section, we work out the local structure of the parabolic
subcomplex $\mathfrak{P}_C$ resp. its quotient $\Gamma\backslash
\mathfrak{P}_C$, for $\Gamma$ any of the linear groups
$\op{(P)GL}_2(k[C])$ or $\op{(P)SL}_2(k[C])$. The general procedure is
as follows: for a geometrically split bundle, we completely describe
the action of its automorphism group on the link of the corresponding
point in the building. The link $\op{Lk}_{\mathfrak{P}}(x)$ of the
point $x$ in $\mathfrak{P}_C$ is given by those simplices in
$\op{Lk}_{\mathfrak{X}}(x)$ which are  fixed by a non-unipotent
non-central element of the stabilizer of $x$. The link of the image of
$x$ in the quotient $\Gamma\backslash\mathfrak{P}_C$  is then given by
the quotient
$\op{Stab}(x;\Gamma)\backslash\op{Lk}_{\mathfrak{P}}(x)$. These local
computations will be used in the next section to describe the global
structure of the quotient $\Gamma\backslash\mathfrak{P}_C$.

\subsection{Action of the automorphism group on the links}

We first investigate the action of automorphism groups of rank two
bundles on the links of their corresponding points in
$\mathfrak{X}_C$. Let $\mathcal{E}$ be a vector bundle corresponding
to the vertex $x$ in $\mathfrak{X}_C$. By \prettyref{prop:aut}, the
stabilizer of $x$ is essentially determined by the automorphism group
of the vector bundle $\mathcal{E}$. Combining this with 
\prettyref{prop:auto}, we see that $x$ is in $\mathfrak{P}_C$ if and
only if $\mathcal{E}$ is geometrically split. Moreover,
\prettyref{prop:auto} provides a detailed description of the
isomorphism types of possible stabilizer groups.  

To obtain a description of the action of those stabilizer groups on
the links, recall from \prettyref{sec:buildings} that for $x$ a vertex
of the building, the corresponding link is a disjoint union
$\bigsqcup_{i=1}^s\mathbb{P}^1(k(P_i))$ with the ``complete
$s$-partite simplicial structure''. The point $x$ is given by
specifying $\mathcal{O}_{P_i}$-lattices in $V=K^2$, or equivalently
(using the standard basis of $V$) by specifying matrices $M_i\in
\op{GL}_2(K_{v_i})$, for $i\in\{1,\dots,s\}$. As in
\prettyref{sec:buildings}, the link of $x$ can be described in terms
of these data: the vertices in the $i$-th component
$\mathbb{P}^1(k(P_i))$ are points corresponding to the tuples 
$$
\left(M_1,\dots,M_i\cdot\left(\begin{array}{cc} 
\pi_i&\tilde{\alpha_i}\\0&1
\end{array}\right),\dots,M_s\right), \tilde{\alpha_i}\in
\mathcal{O}_{P_i},\textrm{ and}$$
$$
\left(M_1,\dots,M_i\cdot \left(\begin{array}{cc}
\pi_i^{-1}&0\\0&1
\end{array}\right),\dots,M_s\right).
$$
Note that the point in the building only depends on the $\prod 
\op{GL}_2(\mathcal{O}_{P_i})\cdot K^\times_{v_i}$-right coset of the above,
in particular, the first line above only depends on the residue class
$\alpha_i$ of $\tilde{\alpha_i}$ in $\mathbb{A}^1(k(P_i))$. 
The stabilizer of the point $(M_1,\cdots,M_s)$ in the group
$\Gamma=\op{(P)GL}_2(k[C])$ or $\Gamma=\op{(P)SL}_2(k[C])$ is then
exactly the subgroup of $\Gamma$ leaving invariant the $\prod
\op{GL}_2(\mathcal{O}_{P_i})\cdot K_{v_i}^\times$-right coset of
$(M_1,\dots,M_s)$. The next proposition describes the action of the
stabilizer on the link by computing explicitly the action of the
stabilizer on the right cosets of matrices as given above.

\begin{lemma}
\label{lem:help1}
Let $k$ be a field, let $\overline{C}$ be a smooth projective curve
over $k$ and set $C=\overline{C}\setminus\{P_1,\dots,P_s\}$. Let
$\mathcal{E}\cong\mathcal{L}\oplus\mathcal{L}'$ be a split vector
bundle over $\overline{C}$, and denote by $x$ the point of
$\mathfrak{X}_C$ corresponding to $\mathcal{E}$. Denote by $\Gamma(C)$
one of the linear groups $\op{(P)GL}_2(k[C])$ or
$\op{(P)SL}_2(k[C])$. 
\begin{enumerate}
\item 
There exist closed points $Q_1,\dots,Q_t$ such that the restrictions
of both $\mathcal{L}$ and $\mathcal{L}'$ to $C'=\overline{C}\setminus
\{P_1,\dots,P_s,Q_1,\dots,Q_t\}$ are trivial. 
\item There is an inclusion of
buildings $\mathfrak{X}_C\hookrightarrow \mathfrak{X}_{C'}$ which is
equivariant for the natural homomorphism $\Gamma(C)\to
\Gamma(C')$ and  induces an isomorphism on stabilizers
$\op{Stab}(x;\op{P}\Gamma(C))\cong \op{Stab}(x;\op{P}\Gamma(C'))$.  
\item
As a consequence of (2), the inclusion
$\op{Lk}_{\mathfrak{X}_C}(x)\hookrightarrow
\op{Lk}_{\mathfrak{X}_{C'}}(x)$ is equivariant for the action of
$\op{Stab}(x;\Gamma(C))$. 
\end{enumerate}
\end{lemma}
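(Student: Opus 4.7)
For part (1), the plan is to trivialize the line bundles on a suitable Zariski open by choosing rational sections. Concretely, I would pick a nonzero element $s$ of the generic stalk $\mathcal{L} \otimes_{\mathcal{O}_{\overline{C}}} k(C)$; its divisor $\operatorname{div}(s)$ is a finite formal sum of closed points of $\overline{C}$. Doing the same for $\mathcal{L}'$ and letting $Q_1, \dots, Q_t$ denote the closed points appearing in $\operatorname{div}(s) \cup \operatorname{div}(s')$ that are not already among $\{P_1, \dots, P_s\}$, the sections $s, s'$ become everywhere nonzero regular sections of $\mathcal{L}, \mathcal{L}'$ on $C' = \overline{C} \setminus \{P_1, \dots, P_s, Q_1, \dots, Q_t\}$, and therefore give the required trivializations.

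For part (2), I would apply \prettyref{prop:functor1} to the identity morphism $\id \colon \overline{C} \to \overline{C}$, taking the points on the source to be $\{P_1, \dots, P_s, Q_1, \dots, Q_t\}$ and the points on the target to be $\{P_1, \dots, P_s\}$. This directly yields a $\Gamma(C) \to \Gamma(C')$-equivariant cubical map $\mathfrak{X}_C \to \mathfrak{X}_{C'}$. In the lattice picture, it simply extends a tuple $([L_1],\dots,[L_s])$ to $([L_1],\dots,[L_s],[\mathcal{O}_{Q_1}^2],\dots,[\mathcal{O}_{Q_t}^2])$, which is an injection onto the subcomplex where all new tree-coordinates are the standard vertex; so there is no subdivision issue and the map is in fact an inclusion of cubical complexes.

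The hard part is the stabilizer claim, and the plan is to reduce it to \prettyref{prop:aut}. That proposition identifies $\op{Stab}(x; \op{PGL}_2(k[C])) \cong \op{Aut}(\mathcal{E})/k^\times$ and (after further dividing out $\{\pm 1\}$) the analogous statement for $\op{PSL}_2$, the key observation being that $\op{Aut}(\mathcal{E}, f)$ is just the kernel of $\det \colon \op{Aut}(\mathcal{E}) \to k^\times$ and therefore independent of the choice of determinant datum $f$. In particular, neither group depends on the affine open chosen to represent the trivializing data; it is an invariant of the vector bundle $\mathcal{E}$ on $\overline{C}$ itself. Thus the map induced by $\op{P}\Gamma(C) \to \op{P}\Gamma(C')$ between the two projective stabilizers is literally the identity on $\op{Aut}(\mathcal{E})/k^\times$. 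The main subtlety to address here is that, by \prettyref{lem:picc}, the centers $k[C]^\times$ and $k[C']^\times$ do differ (by $\mathbb{Z}^t$ up to torsion), and this is precisely what prevents the same statement from holding in $\op{GL}_2$ or $\op{SL}_2$ non-projectively; one has to note that passing to the projective quotient kills exactly this ambiguity.

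Part (3) is then immediate: the inclusion of buildings from (2) restricts to an inclusion of links at $x$, and since the two stabilizers in question act through the same quotient $\op{Aut}(\mathcal{E})/k^\times$ identified in (2), and the inclusion is $\Gamma(C) \to \Gamma(C')$-equivariant by construction, the link inclusion is automatically $\op{Stab}(x; \Gamma(C))$-equivariant.
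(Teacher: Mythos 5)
Your proposal is correct and follows essentially the same route as the paper: part (1) via the divisor/rational-section description of line bundles on a smooth curve, part (2) via the explicit coset/lattice description of the inclusion of buildings together with the observation from \prettyref{prop:aut} that the projective stabilizers are automorphism groups of the bundle on $\overline{C}$ and hence independent of the chosen affine subcurve. Your extra remarks (no subdivision needed since the morphism is the identity, and the center $k[C]^\times$ versus $k[C']^\times$ being exactly what obstructs the non-projective statement) are correct and match the paper's own remark following the lemma.
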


\begin{proof}
(1) First note that $\mathcal{L}^{-1}|_C\cong\mathcal{L}'|_C$ because
$\mathcal{E}|_C$ is trivial. Because $\overline{C}$ is a smooth curve,
the usual identification of divisor class group and Picard group
allows to write $\mathcal{L}\cong \mathcal{O}_C(\sum_{i=1}^s n_i
P_i+\sum_{j=1}^t m_j Q_j)$. The restriction of $\mathcal{L}$ to 
$C'=\overline{C}\setminus\{P_1,\dots,P_s,Q_1,\dots,Q_t\}$ is then
obviously trivial, cf. also \prettyref{lem:picc}.

For (2), we can consider the quotient description of the vertices of the
building. The induced map is induced by inclusion of the first $s$
factors: 
$$
\prod_{i=1}^s\op{GL}_2(K_{v_i})/(\op{GL}_2(\mathcal{O}_{P_i})\cdot
K^\times_{v_i}) \to 
$$
$$
\to\prod_{i=1}^s\op{GL}_2(K_{v_i})/(\op{GL}_2(\mathcal{O}_{P_i})\cdot
K^\times_{v_i})\times 
\prod_{j=1}^t\op{GL}_2(K_{v_j})/(\op{GL}_2(\mathcal{O}_{Q_j})\cdot
K^\times_{v_j})
$$ 
The map is evidently injective on vertices. Since the building is a
cubical complex, i.e., every cube is uniquely determined by its
vertices, the map is an inclusion of complexes. As the groups
$\Gamma(C)$ and $\Gamma(C')$ act via diagonal inclusion into the
product $\prod \op{GL}_2(K_{v_i})$, the equivariance of the inclusion
of buildings is also clear. Similarly, it is clear that the map
restricts to a homomorphism of stabilizers. The stabilizers have been
described in \prettyref{prop:aut}, and from that description it is
clear that the map induced on stabilizers has to be an isomorphism -
the relevant stabilizer groups are directly related to automorphism
groups of vector bundles over the curve $\overline{C}$ and independent
of the choice of affine subcurve. 

(3) is a direct consequence of (2). 
\end{proof}

\begin{remark}
Note that the isomorphism on stabilizer groups in (2) above only holds
for the projective (special) linear groups. In the case of
$\op{GL}_2$, the stabilizer group can become bigger,
cf. \prettyref{prop:aut}. However, the $k[C]^\times$-part of the stabilizer
acts trivially on the link anyway, so this does not affect application
of the above lemma to $\op{GL}_2(k[C])$. 
\end{remark}

\begin{proposition}
\label{prop:action}
Let $k$ be a field, let $\overline{C}$ be a smooth projective curve
over $k$ and set $C=\overline{C}\setminus\{P_1,\dots,P_s\}$. Let
$\mathcal{E}\cong\mathcal{L}\oplus\mathcal{L}'$ be a split vector
bundle over $\overline{C}$. Denote by $x$ the point of
$\mathfrak{X}_C$ corresponding to $\mathcal{E}$. 

There exist closed points $Q_1,\dots,Q_t$ of $C$ such that the image
of $x$ in the double quotient
$$
\op{GL}_2(k[C'])\backslash
\prod_{i=1}^{s}\op{GL}_2(K_{v_i})/(\op{GL}_2(\mathcal{O}_{P_i})\cdot 
  K^\times_{v_i})\times 
\prod_{j=1}^{t}\op{GL}_2(K_{v_j})/(\op{GL}_2(\mathcal{O}_{Q_j})\cdot
  K^\times_{v_j})
$$
has a representative of the following form:
$$
\left(\left(\begin{array}{cc}
\pi_1^{a_1}&0\\0&1\end{array}\right),\dots,
\left(\begin{array}{cc}
\pi_t^{a_t}&0\\0&1\end{array}\right)\right), \quad\sum a_i\geq 0.
$$

\begin{enumerate}[(a)]
\item Assume $\mathcal{L}\cong\mathcal{L}'$, i.e., the stabilizer is
  conjugate to a diagonally embedded $\op{GL}_2(k)$. The action of
  $\op{GL}_2(k)$ on the $i$-th component of the link is the   standard
  action of   $\op{GL}_2$ on $\mathbb{P}^1(k(P_i))$, hence the action of
  $\op{GL}_2(k)$ on the link is the diagonal standard action. 

\item Assume $\mathcal{L}\not\cong\mathcal{L}'$, i.e., the stabilizer
  is conjugate to the group 
$$
\op{B}_2(a_1,\dots,a_t)=\left\{
\left(\begin{array}{cc}
a & 0\\ f&b
\end{array}\right)\mid a,b\in k, ab\neq 0, v_i(f)\geq -a_i
\right\}.
$$
There are two cases for the action of the stabilizer on the $i$-th
component of the link of $x$: if $v_i(f)=-a_i$, then the action is the
``standard'' action. If $v_i(f)>-a_i$, the action is trivial. 
\end{enumerate}
\end{proposition}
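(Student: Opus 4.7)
The strategy is to reduce the question to an explicit matrix computation. First I would apply \prettyref{lem:help1} to enlarge the puncture set by points $Q_1,\dots,Q_t$ so that both $\mathcal{L}$ and $\mathcal{L}'$ become trivial on $C'=\overline{C}\setminus\{P_1,\dots,P_s,Q_1,\dots,Q_t\}$. Since the inclusion $\mathfrak{X}_C\hookrightarrow\mathfrak{X}_{C'}$ is equivariant for the natural group map and induces an isomorphism on the projective stabilizers of $x$, the action on $\op{Lk}_{\mathfrak{X}_C}(x)$ is recovered from the corresponding action on $\op{Lk}_{\mathfrak{X}_{C'}}(x)$; from now on I may therefore assume that both line bundles are trivial on the open curve.

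Next I would put the representative of $x$ into the form claimed in the statement. Writing $\mathcal{L}\cong\mathcal{O}_{\overline{C}}(\sum a_iQ_i)$ and $\mathcal{L}'\cong\mathcal{O}_{\overline{C}}(\sum b_iQ_i)$ using the local trivializations, the local lattice at $Q_i$ is $\op{diag}(\pi_i^{a_i},\pi_i^{b_i})\mathcal{O}_{Q_i}^2$ in the chosen basis of $V$. Absorbing $\pi_i^{b_i}$ into the central $K_{v_i}^\times$-factor and renaming $a_i-b_i$ to $a_i$ (possibly after swapping the two summands to arrange $\sum a_i\geq 0$) yields the representative $\op{diag}(\pi_i^{a_i},1)$ at each $Q_i$ and the identity at each $P_j$. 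This is the only geometric input; the rest is pure matrix algebra.

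For case (a), where $\mathcal{L}\cong\mathcal{L}'$, all differences $a_i-b_i$ vanish and the normalized representative is the identity. By \prettyref{prop:auto}(2a) the stabilizer is the diagonally embedded $\op{GL}_2(k)$. A link vertex is represented by $\begin{pmatrix}\pi_i&\tilde\alpha\\ 0&1\end{pmatrix}$ (or by its $\alpha=\infty$ analogue), and left multiplication by $g\in\op{GL}_2(k)\hookrightarrow\op{GL}_2(k(P_i))$, followed by the reduction $\op{GL}_2(K_{v_i})\to\mathbb{P}^1(k(P_i))$ built into the dictionary of \prettyref{sec:buildings}, unwinds directly to the standard $\op{GL}_2(k)$-action on $\mathbb{P}^1(k(P_i))$ via $k\hookrightarrow k(P_i)$.

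For case (b), where $\mathcal{L}\not\cong\mathcal{L}'$, \prettyref{prop:auto}(2b) describes the stabilizer as a lower-triangular group, and the computational heart of the proof is the conjugation
\[
\op{diag}(\pi_i^{a_i},1)^{-1}\begin{pmatrix}a&0\\ f&b\end{pmatrix}\op{diag}(\pi_i^{a_i},1)=\begin{pmatrix}a&0\\ f\pi_i^{a_i}&b\end{pmatrix}.
\]
This matrix lies in $\op{GL}_2(\mathcal{O}_{P_i})\cdot K_{v_i}^\times$ precisely when $v_i(f)\geq -a_i$, which reproduces the defining condition of $\op{B}_2(a_1,\dots,a_t)$. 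Multiplying the lattice representative of a link vertex by this conjugate and reducing modulo the coset subgroup on the right, the induced action on $\mathbb{P}^1(k(P_i))$ is read off from the mod-$\pi_i$ reduction of the conjugated matrix: when $v_i(f)=-a_i$ the residue $\overline{f\pi_i^{a_i}}\in k(P_i)^\times$ survives and $g$ acts on $\mathbb{P}^1(k(P_i))$ as the standard lower-triangular matrix with this residue as off-diagonal entry; when $v_i(f)>-a_i$ the off-diagonal entry reduces to zero, the conjugate becomes diagonal, and the unipotent contribution to the action disappears, leaving the stated ``trivial'' action. The hard part is the bookkeeping in this last step: correctly identifying which scalar factors are absorbed by $K_{v_i}^\times$ versus by $\op{GL}_2(\mathcal{O}_{P_i})$ when reducing the product to the normal form $\begin{pmatrix}\pi_i&\tilde\beta\\ 0&1\end{pmatrix}$ of a link vertex. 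Everything else is routine once the conjugation displayed above is in place.
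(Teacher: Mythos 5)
Your proposal is correct and follows essentially the same route as the paper: enlarge the puncture set via \prettyref{lem:help1}, normalize the double coset representative to diagonal form by absorbing one summand into the central $K_{v_i}^\times$-factors, identify the stabilizer, and read off the link action by an explicit matrix computation. The only (cosmetic) difference is that you package the link computation as conjugation $M^{-1}gM$ followed by reduction modulo $\pi_i$, whereas the paper multiplies the coset representatives of the link vertices out by hand; both yield the same dichotomy between the unit and non-unit cases of $f\pi_i^{a_i}$.
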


\begin{proof}
We first prove the statement about the double coset
representative. Denoting
$C'=\overline{C}\setminus\{P_1,\dots,P_s,Q_1,\dots,Q_t\}$ as in
\prettyref{lem:help1}, the double quotient written classifies exactly
equivalence  (rel $\partial C'$) classes of rank two vector bundles on
$\overline{C}$ whose restriction to $C'$ is trivial. Therefore, the
elements corresponding to $\mathcal{E}\cong
\mathcal{L}\oplus\mathcal{L}'$ and
$\mathcal{L}\otimes(\mathcal{L}')^{-1}\oplus\mathcal{O}$ are equal in
the double quotient. 
It is easy to see that the double coset representative given
corresponds to the vector bundle 
$$
\mathcal{O}\left(-\sum_{i=1}^sa_iP_i-\sum_{j=1}^ta_{s+j}Q_j\right)
\oplus\mathcal{O} 
$$
Therefore, the statement follows if the $a_i$ are chosen such that
$$
\mathcal{O}\left(-\sum_{i=1}^sa_iP_i-\sum_{j=1}^ta_{s+j}Q_j\right)\cong
\mathcal{L}\otimes(\mathcal{L}')^{-1}.
$$
Note that it is necessary to enlarge the set $\{P_1,\dots,P_s\}$
(using \prettyref{lem:help1})
because $\mathcal{L}\otimes(\mathcal{L}')^{-1}$ might not be trivial
over $C$, whence it would not correspond to an element in the double
quotient $k[C]^\times\backslash \prod
K^\times_{v_i}/\prod\mathcal{O}^\times_{P_i}$. 

Next, we explain how to reduce the statements (a) and (b) to
statements about the given double coset representative. By
\prettyref{lem:help1} (3), the inclusion of links
$\op{Lk}_{\mathfrak{X}_C}(x)\to \op{Lk}_{\mathfrak{X}_{C'}}(x)$ is
equivariant for the action of the stabilizer of $x$. To determine this
action, it thus suffices to determine the action of
$\op{Stab}(x;\op{GL}_2(k[C]))$ on the vertices of
$\mathfrak{X}_{C'}$. Moreover, if two points $x$ and $x'$ of
$\mathfrak{X}_{C'}$ are $\op{GL}_2(k[C'])$-conjugate, then so are
their stabilizers as well as the actions of the stabilizers on the
respective links. We can therefore replace $x$ by any other
representative of the vertices of
$\op{GL}_2(k[C'])\backslash\mathfrak{X}_{C'}$. The vertices of the
latter quotient are exactly the double cosets in the
statement. Summing up, the link $\op{Lk}_{\mathfrak{X}_C}(x)$ together
with the action of the stabilizer of $x$ can be equivariantly
embedded into the link of the chosen representative
above in $\mathfrak{X}_{C'}$. 

Now we come to the prove of part (a). From the above, we have that the
stabilizer of $x$ is $\op{GL}_2(k[C'])$-conjugate to the stabilizer of
the double coset $(1,\dots,1)$ where all entries are identity
matrices. The stabilizer of this double coset inside
$\op{GL}_2(k[C'])$ is obviously $\op{GL}_2(k)$, embedded diagonally as
constants. Up to right multiplication with elements from
$\op{GL}_2(\mathcal{O}_{P_i})\cdot K_{v_i}^\times$, we find
$$
\left(\begin{array}{cc}
a&b\\c&d
\end{array}\right)\cdot
\left(\begin{array}{cc}
\pi_i^{-1}&0\\0&1
\end{array}\right)
=
\left(\begin{array}{cc}
a\pi_i^{-1}&b\\c\pi_i^{-1}&d
\end{array}\right)
\sim
\left(\begin{array}{cc}
\pi_i&\frac{a}{c}\\0&1
\end{array}\right), c\neq 0
$$
$$
\left(\begin{array}{cc}
a&b\\c&d
\end{array}\right)\cdot
\left(\begin{array}{cc}
\pi_i&\tilde{\alpha}\\0&1
\end{array}\right)
=
\left(\begin{array}{cc}
a\pi_i&a\tilde{\alpha}+b\\c\pi_i&c\tilde{\alpha}+d
\end{array}\right)
\sim
\left(\begin{array}{cc}
\pi_i&\frac{a\tilde{\alpha}+b}{c\tilde{\alpha}+d}\\0&1
\end{array}\right), c\tilde{\alpha}+d\neq 0
$$
and the missing cases are dealt with similarly.
But this is indeed the standard action of $\op{GL}_2(k)$ on
$\mathbb{P}^1(k(P_i))$. 

(b) By what was said above, it suffices to compute the stabilizer and
its action on the link in the case of the diagonalized double coset
representative. 

Up to right multiplication by elements of
$\op{GL}_2(\mathcal{O}_{P_i})\cdot K_{v_i}^\times$, we have the
following 
$$
\left(\begin{array}{cc}
a & 0\\ f&b
\end{array}\right)\cdot
\left(\begin{array}{cc}
\pi_i^{a_i}&0\\0&1\end{array}\right)=
\left(\begin{array}{cc}
a\pi_i^{a_i} & 0 \\
f\pi_i^{a_i} & b
\end{array}\right)
\sim
\left(\begin{array}{cc}
\pi_i^{a_i} & 0 \\
0 & 1
\end{array}\right),
$$
whenever $v_i(f)\geq -a_i$. From $\sum a_i\geq 0$, it follows that the
matrices above are the only ones possibly stabilizing the
representative. This shows that the stabilizer is indeed as described. 

We now study the action of the stabilizer $\op{B}_2(a_1,\dots,a_t)$
on the link of $x$. For the point $\infty$, we have
$$
\left(\begin{array}{cc}
a&0\\f&b
\end{array}\right)
\left(\begin{array}{cc}
\pi_i^{a_i-1}&0\\0&1
\end{array}\right)
=
\left(\begin{array}{cc}
a\pi_i^{a_i-1}&0\\f\pi_i^{a_i-1}&b
\end{array}\right)
$$
and there are two cases to consider: if $v_i(f)=-a_i$, then
$v_i(f\pi_i^{a_i-1})=-1$, and we can rewrite the matrix using right
multiplication with $\op{GL}_2(\mathcal{O}_{P_i})\cdot K_{v_i}^\times$
to the form 
$$
\left(\begin{array}{cc}
\pi_i^{a_i+1}&g\\0&1
\end{array}\right),
$$
where $v(g)=a_i$ and the leading term is $\frac{a}{f_0}$ with
$f_0$ the leading term of $f$ (as in the constant case). 
On the other
hand, if $v_i(f)>-a_i$, then $v_i(f\pi_i^{a_i-1})\geq 0$, and right
multiplication allows to rewrite this to
$\operatorname{diag}(\pi_i^{a_i-1},1)$. In this case, the action is
trivial. For any other point
$\alpha\in\mathbb{P}^1(k(\mathbb{P}^1))\setminus\{\infty\}$, we
consider    
$$
\left(\begin{array}{cc}
1&0\\f&1
\end{array}\right)
\left(\begin{array}{cc}
\pi_i^{a_i+1}&\pi_i^{a_i}\tilde{\alpha}\\0&1
\end{array}\right)
=
\left(\begin{array}{cc}
\pi_i^{a_i+1}&\pi_i^{a_i}\tilde{\alpha}\\
f\pi_i^{a_i+1}&f\pi_i^{a_i}\tilde{\alpha}+1
\end{array}\right).
$$
The restriction $a=b=1$ is only for simplification of
exposition. In the case $v_i(f)=-a_i$, we can use right multiplication
to transform the matrix to 
$$
\left(\begin{array}{cc}
\pi_i^{a_i+1}&\pi_i^{a_i}\cdot\frac{\tilde{\alpha}}{1+f\pi_i^{a_i}\tilde{\alpha}}\\
0&1
\end{array}\right).
$$
We can use further right multiplication to transform
$\frac{\tilde{\alpha}}{1+f\pi_i^{a_i}\tilde{\alpha}}$ to $\frac{\tilde{\alpha}}{1+c\tilde{\alpha}}$ with $c$ the constant
term of $f\pi_i^{a_i}$. If $v_i(f)>-a_i$, then this constant term is
$0$, and the action is trivial.

For $v_i(f)=-a_i=0$, this is the standard action of the  Borel
$\mathbb{B}_2(k)$ on $\mathbb{P}^1(k(P_i))$. For $v_i(f)=-a_i\neq 0$ ,
the action is still the same, only that $f$ acts via its constant
coefficient. If $v_i(f)> -a_i$, then the action is trivial. The
claim is proved. 
\end{proof}

\begin{remark}
A similar result holds for bundles which are indecomposable but
geometrically split. Let $\mathcal{E}$ be such a bundle, with
corresponding point $x\in\mathfrak{X}_C$, and let $L/k$ be a splitting
field for the bundle $\mathcal{E}$. Then the morphism of curves
$\phi:\overline{C}\times_kL\to\overline{C}$ induces a morphism of
buildings $\mathfrak{X}_C\hookrightarrow \mathfrak{X}_{C\times_kL}$
which embeds $\mathfrak{X}_C$ as the $\op{Gal}(L/k)$-invariants into
$\mathfrak{X}_{C\times_kL}$. The results of \prettyref{prop:action}
above then imply that the action of the stabilizer of $x$ on
$\op{Lk}_{\mathfrak{X}_C}(x)$ is also the standard one, coming from
the embedding of $\op{Stab}(x;\op{GL}_2(k[C]))$ as
$\op{Gal}(L/k)$-invariants of $\op{Stab}(x;\op{GL}_2(L[C]))$. 
For this and other reasons, our main results are restricted to the
case of algebraically closed base fields. 
\end{remark}

\subsection{Local structure} 
Having determined the action of vertex stabilizers on links, we are
now ready to describe the local structure of the quotients
$\Gamma\backslash\mathfrak{P}_C$ as subcomplexes of
the respective quotients of $\Gamma\backslash\mathfrak{X}_C$, where as
usual $\Gamma$ is one of the linear groups $\op{(P)GL}_2(k[C])$ or
$\op{(P)SL}_2(k[C])$. For a point $x\in 
\mathfrak{X}_C$, the link $\op{Lk}_{\Gamma\backslash\mathfrak{P}_C}$
is the quotient
$\op{Stab}(x;\Gamma)\backslash\op{Lk}_{\mathfrak{P}_C}(x)$. The
following proposition now uses \prettyref{prop:action} to determine
$\op{Lk}_{\mathfrak{P}_C}(x)$ as well as its quotient modulo
$\op{Stab}(x;\Gamma)$.  

\begin{proposition}
\label{prop:local}
Let $k$ be an algebraically closed field, let $\overline{C}$ be a
smooth projective curve over $k$ and denote
$C=\overline{C}\setminus\{P_1,\dots,P_s\}$. Let
$\mathcal{E}\cong\mathcal{L}\oplus\mathcal{L}'$ be a split rank two
vector bundle, and let $x$ be the corresponding point in the building
$\mathfrak{X}_C$. Denote by $\Gamma$ one of the linear groups
$\op{(P)GL}_2(k[C])$ or $\op{(P)SL}_2(k[C])$.  

Denote by $\mathfrak{A}_x$ the intersection of an apartment
$\mathfrak{A}$ with $\op{Lk}_{\mathfrak{X}_C}(x)$.
\begin{enumerate}[(a)]
\item If $\mathcal{L}'\cong\mathcal{L}^{-1}$, then
  $\operatorname{Lk}_{\Gamma\backslash\mathfrak{P}}(x)$ is isomorphic
  to the quotient of $\mathfrak{A}_x$ modulo the antipodal
  $\mathbb{Z}/2$-action. The star
  $\operatorname{St}_{\Gamma\backslash\mathfrak{P}}(x)$ is isomorphic
  to a cone over (a cubical complex of the homotopy type of)
  $\mathbb{RP}^{s-1}$. The stabilizer of $x$ is 
  $\Gamma(k)$, everything else is stabilized (up to a unipotent group)
  by a maximal torus of $\Gamma$. 
\item If $\mathcal{L}'\not\cong\mathcal{L}^{-1}$,
  then $\operatorname{Lk}_{\Gamma\backslash\mathfrak{P}}(x)$ is
  isomorphic to $\mathfrak{A}_x$. The stabilizer is (up to a
  unipotent group) a maximal torus in $\Gamma(k)$. 
\end{enumerate}
\end{proposition}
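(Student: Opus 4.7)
The plan is to combine the explicit stabilizer action computed in \prettyref{prop:action} with a direct analysis of which simplices of the link land in $\mathfrak{P}_C$, and then to pass to the quotient by $\op{Stab}(x;\Gamma)$.

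First I identify $\op{Lk}_{\mathfrak{X}_C}(x)$ with the simplicial join $*_{i=1}^s\mathbb{P}^1(k)$ as in \prettyref{sec:buildings}; a cell corresponds to a subset $I\subseteq\{1,\dots,s\}$ together with points $y_i\in\mathbb{P}^1(k)$ for $i\in I$, and by \prettyref{prop:aut} its stabilizer in $\op{Stab}(x;\Gamma)$ is the intersection $\bigcap_{i\in I}\op{Stab}(y_i)$. By \prettyref{prop:action}, modulo the common unipotent radical this action factors through either a diagonally embedded $\op{PGL}_2(k)$ acting by the standard projective action on each factor (case (a) of that proposition, when the two line bundles of the splitting are isomorphic), or through a maximal torus $T$ whose fixed point set on each $\mathbb{P}^1(k)$ is the pair $\{0,\infty\}$ corresponding to the unique splitting (case (b)).

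Second, I identify the parabolic cells. A non-central semisimple element of $\op{PGL}_2(k)$ fixes exactly its two eigenlines on $\mathbb{P}^1(k)$, a non-trivial element of $T$ fixes exactly $\{0,\infty\}$, and a unipotent element fixes only a single point. Consequently, in all cases the fixed locus of a non-central non-unipotent element in $*_i\mathbb{P}^1(k)$ is the join of two-element subsets, so $\op{Lk}_{\mathfrak{P}_C}(x)$ is a union of apartment links of the form $\mathfrak{A}_x = *_{i=1}^s\{0,\infty\}$; combinatorially this is the boundary of the $s$-dimensional cross-polytope, hence homeomorphic to $S^{s-1}$. By $2$-transitivity of $\op{PGL}_2(k)$ on pairs of distinct points, in case (a) of the present proposition all apartment links at $x$ are conjugate, while in case (b) the splitting of $\mathcal{E}$ is rigid and only a single apartment link appears. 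To pass to the quotient I observe that the torus $T$ fixes $\mathfrak{A}_x$ pointwise; in case (b) no further identifications occur, so $\op{Lk}_{\Gamma\backslash\mathfrak{P}}(x)\cong\mathfrak{A}_x$. In case (a) a Weyl-type element is present, acting as the simultaneous swap $0\leftrightarrow\infty$ in every factor, i.e.\ as the antipodal involution on $\mathfrak{A}_x\cong S^{s-1}$; the quotient is a cubical complex of the homotopy type of $\mathbb{RP}^{s-1}$. The star is in either case the cone on the link by construction.

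The main obstacle is the case distinction itself: one must verify that the hypothesis $\mathcal{L}'\cong\mathcal{L}^{-1}$ is precisely what ensures a Weyl-type swap lies inside the $\Gamma$-stabilizer of $x$, with the determinant-preservation constraint for $\op{SL}_2$ taken into account. Concretely, a swap of the summands of $\mathcal{E}=\mathcal{L}\oplus\mathcal{L}'$ descends to an automorphism compatible with the fixed trivialization $f:\det\mathcal{E}\to\mathcal{O}_{\overline{C}}(-\sum m_iP_i)$ of \prettyref{prop:aut} exactly when $\mathcal{L}\otimes\mathcal{L}'$ is globally isomorphic to a product of ideal sheaves at the punctures, and modulo the equivalence rel $\partial C$ this is the stated condition $\mathcal{L}'\cong\mathcal{L}^{-1}$. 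Once this bookkeeping is in place, the rest of the argument reduces to the combinatorics of the antipodal $\mathbb{Z}/2$-action on $S^{s-1}$ sketched above, together with the stabilizer computation which reads off that all cells other than $x$ itself are fixed (up to the unipotent radical) by a maximal torus.
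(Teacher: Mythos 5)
Your argument follows the same route as the paper's proof: identify $\op{Lk}_{\mathfrak{X}_C}(x)$ with the complete $s$-partite complex (join) on $\bigsqcup_{i=1}^s\mathbb{P}^1(k)$, use \prettyref{prop:action} to see that the simplices fixed by a non-central non-unipotent element of $\Stab(x;\Gamma)$ are exactly those conjugate into the apartment sub-link $\mathfrak{A}_x\cong S^{s-1}$, and then pass to the quotient, which is the antipodal ($\mathbb{Z}/2$-Weyl) quotient in case (a) and $\mathfrak{A}_x$ itself in case (b). Two cosmetic slips do not affect the conclusion: in case (b) the parabolic link upstairs may consist of several apartments permuted by the unipotent radical of $\Stab(x;\Gamma)$ (only the quotient is a single copy of $\mathfrak{A}_x$, which is what the paper phrases as $\mathfrak{A}_x$ being a strict fundamental domain), and the condition you offer for the swap of summands to be admissible --- that $\mathcal{L}\otimes\mathcal{L}'$ be a product of ideal sheaves at the punctures --- holds automatically for every vertex of the building, the operative dichotomy being whether $\mathcal{L}\cong\mathcal{L}'$ on $\overline{C}$ so that the swap is an actual bundle automorphism, as extracted from \prettyref{prop:auto} and \prettyref{prop:action}.
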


\begin{proof}
Recall that for a field $F$, the set $\mathbb{P}^1(F)$ is a building
of type $A_1$. The group $\op{GL}_2(F)$ acts in the standard way on
$\mathbb{P}^1(F)$. The standard apartment is the subset
$\{0,\infty\}\subseteq\mathbb{P}^1(F)$, its setwise stabilizer in
$\op{GL}_2(F)$ is the normalizer $\op{N}_2(F)$ of the standard maximal torus
$\op{T}_2(F)$, its pointwise stabilizer is the standard maximal torus
$\op{T}_2(F)$.  

Now we consider the  ``complete $n$-partite simplicial
complex'' $\op{Lk}_{\mathfrak{X}_C}(x)$ on
the disjoint union $\bigsqcup_{i=1}^s\mathbb{P}^1(k)$, with the
diagonal action of 
$\op{GL}_2(k)$. Note that $\mathfrak{A}_x$ is conjugate to the full
subcomplex of $\op{Lk}_{\mathfrak{X}_C}(x)$ spanned by
$\bigsqcup_{i=1}^s\{0_i,\infty_i\}\subseteq\bigsqcup_{i=1}^s\mathbb{P}^1(k)$. It
is easy to see that the setwise stabilizer of $\mathfrak{A}_x$ is
conjugate to the normalizer  $\op{N}_2(k)$ and the pointwise
stabilizer is conjugate to the standard maximal torus
$\op{T}_2(k)$. Note that $\mathfrak{A}_x$ is a 
triangulation of the $(s-1)$-sphere. Moreover, the Weyl group
$\op{N}_2(k)/\op{T}_2(k)\cong\mathbb{Z}/2$ acts on $\mathfrak{A}_x$ by
the antipodal action.  

(a) By \prettyref{prop:action}, we have
$\Stab(x;\Gamma)=\Gamma(k)$ acting via the diagonal standard
action on the link $\bigsqcup_{i=1}^s\mathbb{P}^1(k)$.  A simplex
$\sigma\in\operatorname{Lk}_{\mathfrak{X}_C}(x)$ is contained in
$\operatorname{Lk}_{\mathfrak{P}_C}(x)$ if its stabilizer in
$\op{GL}_2(k)$ contains a non-central non-unipotent
element. The latter is equivalent to $\sigma$ being stabilized by some
maximal torus of $\op{GL}_2(k)$. But this is the case precisely if the
corresponding simplex is $\op{GL}_2(k)$-conjugate to one in
$\mathfrak{A}_x$. We see that every simplex in
$\op{Lk}_{\mathfrak{P}_C}(x)$ is $\op{GL}_2(k)$-conjugate to one in
$\mathfrak{A}_x$. As remarked above, the setwise  stabilizer of the
standard apartment is the normalizer of the maximal 
torus corresponding to $\mathfrak{A}_x$, the pointwise stabilizer
of the standard apartment is the maximal torus itself, and the Weyl
group acts via the antipodal action. This proves the claim about the
link. The claim about the star follows, because the star is the cone
over the link. The maximal torus stabilizes $\mathfrak{A}_x$
pointwise, so each simplex in
$\op{Lk}_{\Gamma\backslash\mathfrak{P}_C}(x)$ is stabilized at least
by the maximal torus. As there are no simplices stabilized by
$\op{GL}_2(k)$, the claim follows from the knowledge of stabilizers in
\prettyref{prop:auto}. 

(b) By \prettyref{prop:action}, we have
$\Stab(x;\op{GL}_2(k[C]))\cong k^n\rtimes \op{T}_2(k)$. The maximal
torus acts via the diagonal standard
action on the link $\bigsqcup_{i=1}^s\mathbb{P}^1(k)$. Fixing a
section of $k^n\rtimes\op{T}_2(k)\to\op{T}_2(k)$, there 
are exactly two fixed points. Any two such choices  of sections yield
actions which are conjugate (by the unipotent radical) to the
action with fixed points exactly $0$ and $\infty$ in 
each disjoint summand $\mathbb{P}^1(k)$. Then also the simplices
connecting these points are stabilized by the maximal
torus. Furthermore, any simplex which is stabilized by some
non-unipotent element is stabilized by some such maximal torus. 
Summing up, we see that every simplex in $\op{Lk}_{\mathfrak{P}_C}(x)$
is $\op{Stab}(x;\op{GL}_2(k[C]))$-conjugate to a simplex in
$\mathfrak{A}_x$, and $\mathfrak{A}_x$ is a strict fundamental domain
for the $\op{Stab}(x;\op{GL}_2(k[C]))$-action on
$\op{Lk}_{\mathfrak{P}_C}(x)$. This proves the claim. 

The same statements also are true for $\op{SL}_2(k[C])$ instead of
$\op{GL}_2(k[C])$ used above.
\end{proof}

\begin{remark}
The link $\operatorname{Lk}_{\mathfrak{P}_C}(x)$ is the preimage of
these links under the projection
$\mathfrak{P}_C\rightarrow\Gamma\backslash\mathfrak{P}_C$. However, it is
more difficult to describe explicitly due to unipotent subgroups in
the stabilizer of $x$. We ignored these in the local description of
$\Gamma\backslash\mathfrak{P}_C$ because they will not be visible in
the homology computations anyway. 
\end{remark}

\begin{remark}
The reason why we restricted to algebraically closed base fields in
\prettyref{prop:local} is the following: although we have identified
in \prettyref{prop:action} that the action of $\op{GL}_2(k)$ on
$\mathbb{P}^1(k(P_i))$ is the standard one, this does not allow to
give an easy description of the quotient
$\op{Stab}(x;\Gamma)\backslash\op{Lk}_{\mathfrak{P}}(x)$. 

The phenomenon is already visible in the case where $s=1$ and 
$[k(P_1):k]=2$. The action of $\op{GL}_2(k)$ on $\mathbb{P}^1(k(P_1))$
is no longer transitive but has three orbits, one orbit isomorphic to
$\mathbb{P}^1(k)$, any of the two other orbits is a
symmetric space generalizing the upper 
half plane $\mathbb{H}$. The stabilizer of points in $\mathbb{P}^1(k)$
is a Borel subgroup, the stabilizer of points in the other orbits is a
rank one non-split torus. 

In general, depending on the size of $k(P_i)$ and the number of degree
$2$ subfields, there can be many relevant non-split tori in
$\op{GL}_2(k)$ fixing points in $\mathbb{P}^1(k(P_i))$. The
description of the quotient
$\op{Stab}(x;\Gamma)\backslash\op{Lk}_{\mathfrak{P}}(x)$ then strongly
depends on the extension $k(P_i)/k$. In any case, the quotient tends
to  be much more complicated than in the algebraically closed case
handled in \prettyref{prop:local}. 
\end{remark}

\section{Global structure of parabolic subcomplexes}
\label{sec:global}

After having determined the local structure of the parabolic
subcomplex $\Gamma\backslash\mathfrak{P}_C$, we will now turn to the
investigation of the global structure. The main results state that
$\Gamma\backslash\mathfrak{P}_C$ is a disjoint union of subcomplexes,
indexed by classes of line bundles over $C$, and the structure of the 
subcomplex associated to the line bundle $\mathcal{L}$ can be
explicitly described: if $\mathcal{L}\not\cong\mathcal{L}^{-1}$ then
it is a torus related to the classifying space of a maximal torus in
$\op{GL}_2(K)$, and if $\mathcal{L}\cong\mathcal{L}^{-1}$ it is a
slightly more complicated space related to the classifying space of
the normalizer of a maximal torus in $\op{GL}_2(K)$. 

The topology of the quotient $\op{GL}_2(k[C])\backslash\mathfrak{X}_C$ - and
in particular the homotopy type at infinity - was studied already by
Stuhler in \cite{stuhler:76} and \cite{stuhler:80} in the case where
$k=\mathbb{F}_q$. The results below can be seen as a version of his
work: our results work over algebraically closed base fields $k$ and
describe a part of $\op{GL}_2(k[C])\backslash\mathfrak{X}_C$ slightly
larger than a neighbourhood of infinity. The method of proof here is
still the one already employed by Stuhler: we will write down model
complexes, and the local computations from \prettyref{sec:local} will
imply that the model complexes are isomorphic to the components of
$\Gamma\backslash\mathfrak{P}_C$. 

\subsection{Line bundles and connected components}
\begin{definition}
Let $k$ be a field, let $\overline{C}$ be a smooth projective curve
and denote $C=\overline{C}\setminus\{P_1,\dots,P_s\}$. 
We define $\mathcal{K}(C)$ to be the quotient of the Picard group
$\op{Pic}(C)$ modulo the involution $\iota:\mathcal{L}\mapsto
\mathcal{L}^{-1}$. 
\end{definition}

As the notation suggests, the set  $\mathcal{K}(C)$ is both related to
the Kummer variety associated to the curve $\overline{C}$ as well as
the set of komponents of $\Gamma\backslash\mathfrak{P}_C$:

\begin{lemma}
\label{lem:conn}
Let $k$ be an algebraically closed field, let $\overline{C}$ be a
smooth projective curve and denote
$C=\overline{C}\setminus\{P_1,\dots,P_s\}$. Denote by $\Gamma$ one of
the groups $\op{(P)GL}_2(k[C])$ or $\op{(P)SL}_2(k[C])$. 
Then we have a bijection 
$$
\pi_0(\Gamma\backslash\mathfrak{P}_C)\cong \mathcal{K}(C).
$$
\end{lemma}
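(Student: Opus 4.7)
The plan is to construct the required bijection by producing an invariant $\phi$ on vertex orbits of $\mathfrak{P}_C$, showing it is constant along edges (so descends to $\pi_0$), and then verifying surjectivity and injectivity by direct manipulation of line bundles. I will treat $\Gamma=\op{GL}_2(k[C])$ in detail; since the center of $\op{GL}_2$ acts trivially on $\mathfrak{X}_C$, the case $\Gamma=\op{PGL}_2(k[C])$ gives literally the same quotient, and the arguments for $\op{(P)SL}_2(k[C])$ are parallel, using the finer ``pair (bundle, determinant trivialization)'' description of \prettyref{prop:quotient}.

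First I will set up the invariant. Given a vertex $x\in\mathfrak{P}_C$, \prettyref{prop:auto} combined with the assumption $k=\overline{k}$ forces the associated bundle from \prettyref{prop:quotient} to split as $\mathcal{E}\cong\mathcal{L}\oplus\mathcal{L}'$ on $\overline{C}$; triviality of $\mathcal{E}|_C$ together with Krull--Schmidt for projective modules of rank one over the Dedekind ring $k[C]$ yields $\mathcal{L}'|_C\cong\mathcal{L}|_C^{-1}$. I define $\phi([x]):=[\mathcal{L}|_C]\in\mathcal{K}(C)$. The only ambiguities are: swapping the summands (absorbed by the involution $\iota$ in $\mathcal{K}(C)$); choosing a different splitting when $\mathcal{L}\cong\mathcal{L}'$ on $\overline{C}$ (in which case all splittings restrict to the same 2-torsion class in $\Pic(C)$); and equivalence rel $\partial C$ (which is invisible after restriction to $C$). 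So $\phi$ is well defined on vertex orbits.

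Second I will show that $\phi$ is constant along each edge, using the local description in \prettyref{prop:local}. That result says every vertex of $\operatorname{Lk}_{\mathfrak{P}_C}(x)$ is $\Stab(x;\Gamma)$-conjugate to an apartment vertex $0_i$ or $\infty_i$ of $\mathbb{P}^1(k(P_i))$ for some $i\in\{1,\dots,s\}$. Via the lattice--matrix dictionary of \prettyref{sec:buildings}, these two apartment vertices represent the bundles $\mathcal{L}\otimes\mathcal{O}_{\overline{C}}(-P_i)\oplus\mathcal{L}'$ and $\mathcal{L}\oplus\mathcal{L}'\otimes\mathcal{O}_{\overline{C}}(-P_i)$, respectively. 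Since $\mathcal{O}_{\overline{C}}(-P_i)|_C$ is trivial, both have the same $\phi$-invariant as $x$. Thus $\phi$ descends to a map $\overline{\phi}\colon\pi_0(\Gamma\backslash\mathfrak{P}_C)\to\mathcal{K}(C)$.

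Finally I will verify bijectivity. For surjectivity, I lift a class $[\mathcal{M}]\in\mathcal{K}(C)$ to $\widetilde{\mathcal{M}}\in\Pic(\overline{C})$ using \prettyref{lem:picc}; the bundle $\widetilde{\mathcal{M}}\oplus\widetilde{\mathcal{M}}^{-1}$ restricts on $C$ to $\mathcal{O}_C^2$ (again by Krull--Schmidt), hence represents a vertex of $\Gamma\backslash\mathfrak{P}_C$ with invariant $[\mathcal{M}]$. For injectivity, if $\phi([x])=\phi([y])$, then after swapping summands one has $\mathcal{L}_x|_C\cong\mathcal{L}_y|_C$ in $\Pic(C)$, which by \prettyref{lem:picc} means $\mathcal{L}_y\cong\mathcal{L}_x\otimes\mathcal{O}_{\overline{C}}(\sum n_iP_i)$ for some $n_i\in\mathbb{Z}$. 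Iterating $|n_i|$ apartment edges at each $P_i$ on the appropriate summand (as identified in the second step) produces a path in $\mathfrak{P}_C$ from $x$ to a vertex whose associated bundle differs from $\mathcal{L}_y\oplus\mathcal{L}_y'$ only by an overall twist $\mathcal{O}_{\overline{C}}(-\sum m_iP_i)$, i.e.\ only by equivalence rel $\partial C$. The hard part is really the second step: it is essential that \prettyref{prop:local} excludes edges of $\mathfrak{P}_C$ which would mix the two summands of $\mathcal{E}$, as any such edge could break the invariance of $\phi$.
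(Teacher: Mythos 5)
Your proof is correct and follows essentially the same route as the paper: the same invariant $[\mathcal{L}\oplus\mathcal{L}']\mapsto[\mathcal{L}|_C]$, the same surjectivity witness $\mathcal{L}\oplus\mathcal{L}^{-1}$, and the same injectivity argument via chains of apartment-type elementary transformations kept inside $\mathfrak{P}_C$ by the torus-fixed-point analysis --- indeed you are more careful than the paper in spelling out, via \prettyref{prop:local}, why the invariant is constant along \emph{every} edge of $\mathfrak{P}_C$ rather than only along the particular chain built for injectivity. One small correction: the two facts you attribute to ``Krull--Schmidt over the Dedekind ring $k[C]$'' (namely $\mathcal{L}'|_C\cong\mathcal{L}|_C^{-1}$ and $(\widetilde{\mathcal{M}}\oplus\widetilde{\mathcal{M}}^{-1})|_C\cong\mathcal{O}_C^2$) are really consequences of the Steinitz theorem that rank and determinant classify projective modules over a Dedekind domain; Krull--Schmidt in the uniqueness-of-summands sense actually \emph{fails} there, and it is precisely that failure which makes the second fact true.
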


\begin{proof}
The set $\pi_0$ is defined by taking the set of vertices of 
$\Gamma\backslash\mathfrak{P}_C$ and dividing by the equivalence
relation generated by $1$-cells of $\Gamma\backslash\mathfrak{P}_C$. 

We first write down a map $(\Gamma\backslash\mathfrak{P}_C)_0\to
\mathcal{K}(C)$: by \prettyref{def:pc}, \prettyref{prop:auto} and our
assumption that $k$ is algebraically closed, any vertex $x$ of
$\Gamma\backslash\mathfrak{P}_C$ corresponds to a split vector bundle
$\mathcal{E}\cong\mathcal{L}\oplus\mathcal{L}'$. 
Recall from \prettyref{prop:quotient} that
$$
\det\mathcal{E}\cong\mathcal{L}\otimes\mathcal{L}'\cong
\mathcal{O}(a_1P_1)\otimes\cdots\otimes \mathcal{O}(a_sP_s)
$$
for suitable $a_i\in\mathbb{Z}$. Using \prettyref{lem:picc}, we see
that the assignment 
$$
(\Gamma\backslash\mathfrak{P}_C)_0\to\mathcal{K}(C):
x=[\mathcal{L}\oplus\mathcal{L}']\mapsto [\mathcal{L}]
$$
is well-defined. It does not depend on the choice of (rel $\partial
C$)-equivalence class of a representative $\mathcal{E}$ of $x$ and does
not depend on the splitting. 

The map is clearly surjective: for any element $l\in\mathcal{K}(C)$,
we can find a line bundle $\mathcal{L}$ on $\overline{C}$ such that
$l=\{\mathcal{L}|_C,\mathcal{L}^{-1}|_C\}$. Then
$\mathcal{L}\oplus\mathcal{L}^{-1}$ is a vector bundle on
$\overline{C}$ with trivial determinant, hence corresponds to a point
$x\in\Gamma\backslash\mathfrak{P}_C$ which is mapped to $l$ by the map
described above. 

It suffices to show injectivity. Let $x_1$ and $x_2$ be two points in
$\Gamma\backslash\mathfrak{P}_C$ corresponding to vector bundles
$\mathcal{E}_1$ and $\mathcal{E}_2$ which are both mapped to
$l\in\mathcal{K}(C)$. This means that we have splittings 
$$
\mathcal{E}_1\cong\mathcal{L}_1\oplus\mathcal{L}_1', \qquad 
\mathcal{E}_2\cong\mathcal{L}_2\oplus\mathcal{L}_2',
$$
with $\mathcal{L}_1|_C\cong(\mathcal{L}_2|_C)^{\pm 1}$. Up to
equivalence (rel $\partial C$) and switching summands, we can assume
$\mathcal{L}_1\cong\mathcal{L}_2$. But then we have
$$
\mathcal{L}_1'\otimes(\mathcal{L}_2')^{-1}\cong
\mathcal{O}(a_1P_1)\otimes\cdots\otimes\mathcal{O}(a_sP_s). 
$$
We need to find a chain of $1$-cubes in
$\Gamma\backslash\mathfrak{P}_C$ connecting $\mathcal{E}_1$ to 
$\mathcal{E}_2$. It is easy to find a chain of elementary
transformations connecting
$\mathcal{O}(a_1P_1)\otimes\cdots\otimes\mathcal{O}(a_sP_s)$ to the
trivial line bundle, noting that
$\mathcal{O}(-P)\hookrightarrow\mathcal{O}$ is an inclusion of line
bundles whose quotient is a torsion sheaf of length one on the curve
$\overline{C}$.  This provides a chain of $1$-cubes in
$\Gamma\backslash\mathfrak{X}_C$ connecting $\mathcal{E}_1$ and
$\mathcal{E}_2$. 

We claim that this chain is contained in $\mathfrak{P}_C$. Let
$\mathcal{L}$ be any line bundle and consider the two bundles
$\mathcal{L}\oplus\mathcal{L}^{-1}\otimes \mathcal{O}(P)$ and
$\mathcal{L}\oplus\mathcal{L}^{-1}$. We can choose a double coset
representative of $\mathcal{L}\oplus\mathcal{L}^{-1}$ which only
contains diagonal matrices. In particular, the stabilizer contains the
diagonal maximal torus. The elementary transformation from
$\mathcal{L}\oplus\mathcal{L}^{-1}$ to
$\mathcal{L}\oplus\mathcal{L}^{-1}\otimes\mathcal{O}(P)$ is then given
by the diagonal matrix $\op{diag}(1,\pi)$ where $\pi$ is a uniformizer
at $P$. In particular, this elementary transformation is compatible
with the action of the diagonal torus. In other words, identifying the
link of $\mathcal{L}\oplus\mathcal{L}^{-1}$ with $\mathbb{P}^1$ with
the standard action of the diagonal torus, the above elementary
transformation corresponds to the point $\infty$ which is fixed by the
diagonal torus. Therefore, the $1$-cube corresponding to the above
elementary transformation actually lies in the subcomplex
$\Gamma\backslash\mathfrak{P}_C$. The general assertion is proved in
just the same way, only more notational effort is required to keep
track of all the coefficients $a_i$. 
Summing up, the path constructed between $\mathcal{E}_1$ and
$\mathcal{E}_2$ lies inside $\Gamma\backslash\mathfrak{P}_C$, and we
have thus shown that the map
$\pi_0(\Gamma\backslash\mathfrak{P}_C)\to\mathcal{K}(C)$ is
injective. 
\end{proof}

\begin{remark}
For a number field $K$, one considers the action of 
$\op{SL}_2(\mathcal{O}_K)$ on its corresponding symmetric space
$\mathfrak{X}$. In this situation, there is a well-known bijection
between the cusps of the locally symmetric space
$\op{SL}_2(\mathcal{O}_K)\backslash\mathfrak{X}$ and the class group
of $K$, going back (at least) to Siegel. The above computation of the
connected components of $\Gamma\backslash\mathfrak{X}_C$ is a
variation on this. The obvious change is that the class group has been
replaced by the Picard group of the curve $C$. A non-obvious change is
the appearance of the involution $\iota$. This comes from the fact
that for each line bundle $\mathcal{L}$ with
$\mathcal{L}\not\cong\mathcal{L}^{-1}$, there are two components of
the homotopy type at infinity, belonging to $\mathcal{L}$ and
$\mathcal{L}^{-1}$, which belong to the same connected component of
$\mathfrak{P}_C$. 
\end{remark}

\subsection{Model complexes of groups}
Now we come to the description of the connected components of
$\Gamma\backslash\mathfrak{P}_C$. There are two cases, depending on
the line bundle $\mathcal{L}$ indexing the component: if
$\mathcal{L}\not\cong\mathcal{L}^{-1}$ then the component is related
to a ``maximal torus of $\Gamma$'', and if
$\mathcal{L}\cong\mathcal{L}^{-1}$ then the component is related to
the ``normalizer of a maximal torus of $\Gamma$''. 
The results are proved by first defining  models for the corresponding
quotients and then provide isomorphisms to the respective connected
components.  

\begin{definition}
\label{def:models}
Let $\mathbb{Z}^s\subseteq\mathbb{R}^s$ be the standard
lattice. Denote by $\mathcal{Z}^s$ the cubical complex generated by
it. We define the  groups 
$$
\mathcal{T}=\left\{(a_1,\dots,a_s)\in\mathbb{Z}^s\mid
  \sum_{i=1}^s a_i[P_i]=0 \textrm{ in
  }\Pic(\overline{C})\right\}, \quad\textrm{ and }\quad
\mathcal{ST}=\mathcal{T}^2.
$$
The groups $\mathcal{T}$ and $\mathcal{ST}$ act on $\mathbb{R}^s$ by
translations and preserve the standard lattice
$\mathbb{Z}^s$. Therefore, the translation actions of $\mathcal{T}$ and
$\mathcal{ST}$ induces respective actions on the cubical complex
$\mathcal{Z}^s$. 

We define the groups $\mathcal{N}$ (resp. $\mathcal{SN}$) to be the 
subgroups of the euclidean group $\op{E}(s)$ generated by $\mathcal{T}$
(resp. $\mathcal{ST}$) and all point inversions in the
points $(a_1,\dots,a_s)$ for which $\sum_{i=1}^s a_i[P_i]=0$ in
$\op{Pic}(\overline{C})$.  
\end{definition}

\begin{remark}
Note that the complex $\mathcal{Z}^s/\mathcal{T}$ already appears in
\cite{stuhler:80} and is used there for the description of the
homotopy type of $\op{PGL}_2(k[C])\backslash\mathfrak{X}_C$ at
infinity. 
\end{remark} 

We note that the group $\mathcal{T}$ is a free abelian group whose
rank equals the rank of the kernel of $\phi$ in
\prettyref{lem:picc}, and in particular is isomorphic to the group of
non-constant units in $k[C]$. There are many reasons for calling this
group $\mathcal{T}$ - it is realized as a subgroup of translations, it
is a subgroup of a maximal torus of $\op{GL}_2(K)$, and the quotient
of $\mathcal{Z}^s/\mathcal{T}$ has the homotopy type of a torus.

The next results will describe the structure of the connected
components of $\Gamma\backslash\mathfrak{P}_C$ as quotients of
apartments modulo suitable torus- or normalizer-like groups.

\begin{lemma}
\label{lem:apartment}
Let $k$ be an algebraically closed field, let $\overline{C}$ be a
smooth projective curve and denote
$C=\overline{C}\setminus\{P_1,\dots,P_s\}$. Let $\mathcal{L}$ be a
line bundle on $\overline{C}$. There exists an apartment
$\mathfrak{A}_{\mathcal{L}}$ of $\mathfrak{X}_C$ and a morphism of
cubical complexes
$\phi_{\mathcal{L}}:\mathcal{Z}^s\to\mathfrak{A}_{\mathcal{L}}$ such
that   
\begin{enumerate}
\item $\mathfrak{A}_{\mathcal{L}}$ is contained in $\mathfrak{P}_C$, 
\item the $0$-cells of the image of $\mathfrak{A}_{\mathcal{L}}$ in
  $\op{GL}_2(k[C])\backslash\mathfrak{X}_C$ correspond to equivalence
  (rel $\partial C$) classes of rank two bundles of the form
$$
\left(\mathcal{L}\otimes\mathcal{O}(a_1P_1)\otimes
\cdots\otimes\mathcal{O}(a_sP_s)\right)\oplus\mathcal{L}^{-1}, \quad
a_i\in\mathbb{Z},
$$
\item $\phi_{\mathcal{L}}$ is an isomorphism.
\end{enumerate}
\end{lemma}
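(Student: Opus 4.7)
The plan is to construct $\mathfrak{A}_{\mathcal{L}}$ as the standard apartment attached to a splitting of a birational model of $\mathcal{L}\oplus\mathcal{L}^{-1}$. Pick any rational section $s$ of $\mathcal{L}$, set $n_i := v_{P_i}(s)$, and use $s$ together with its dual to identify the generic fibre of $\mathcal{L}\oplus\mathcal{L}^{-1}$ with $V = Ke_1 \oplus Ke_2$ in such a way that $\mathcal{L}_{P_i} = \pi_i^{-n_i}\mathcal{O}_{P_i}$ and $\mathcal{L}^{-1}_{P_i} = \pi_i^{n_i}\mathcal{O}_{P_i}$. Define the lattices
\[
L_i^{a_i} := \pi_i^{-n_i-a_i}\mathcal{O}_{P_i}e_1 + \pi_i^{n_i}\mathcal{O}_{P_i}e_2 \qquad (a_i \in \mathbb{Z}),
\]
and let $\mathfrak{A}_{\mathcal{L}} \subset \mathfrak{X}_C$ be the cubical subcomplex whose cubes are products of the edges $[L_i^{a_i}, L_i^{a_i+1}]$. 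By construction $\mathfrak{A}_{\mathcal{L}}$ is the product of the Bruhat--Tits apartments in the factors $\mathfrak{T}_i$ attached to the decomposition $V = Ke_1 \oplus Ke_2$, hence is an apartment of $\mathfrak{X}_C$. Set $\phi_{\mathcal{L}}(a_1,\ldots,a_s) := ([L_1^{a_1}], \ldots, [L_s^{a_s}])$ and extend cubically.

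Property (3) is then essentially formal: $\phi_{\mathcal{L}}$ is bijective on $0$-cells, and the standard cubical structure on $\mathcal{Z}^s$ matches that on the product of tree apartments. For (1), the split torus $\op{T}_2 \subset \op{GL}_2(K)$ attached to $V = Ke_1 \oplus Ke_2$ pointwise stabilizes $\mathfrak{A}_{\mathcal{L}}$ (as already recalled in the proof of \prettyref{prop:local}). In particular the constant subgroup $\op{T}_2(k) \subset \op{GL}_2(k[C])$ stabilizes every cube of $\mathfrak{A}_{\mathcal{L}}$, and its image in $\op{PGL}_2(k[C])$ is non-unipotent; hence each cube of $\mathfrak{A}_{\mathcal{L}}$ has non-unipotent stabilizer and $\mathfrak{A}_{\mathcal{L}} \subset \mathfrak{P}_C$.

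For (2), \prettyref{prop:quotient} sends the vertex $\phi_{\mathcal{L}}(a_1,\ldots,a_s)$ in $\op{GL}_2(k[C])\backslash\mathfrak{X}_C$ to the rel $\partial C$-equivalence class of the sheaf $\mathcal{E}(L_1^{a_1},\ldots,L_s^{a_s})$, and reading this sheaf off the lattice data recovers the decomposition $\mathcal{L}(\sum a_i P_i)\oplus\mathcal{L}^{-1}$ up to a twist supported at the zeros and poles of the chosen section $s$ lying outside $\{P_1,\ldots,P_s\}$. This is where the main technical obstacle lies: when $\mathcal{L}|_C$ is nontrivial there is no rational section of $\mathcal{L}$ regular on all of $C$, and the offending twist is not rel $\partial C$-trivial on $C$ itself. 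The standard remedy is to invoke \prettyref{lem:help1}: enlarge $C$ to $C' = C\setminus\{Q_1,\ldots,Q_t\}$ so that $\mathcal{L}|_{C'}$ is trivial, choose $s$ with divisor supported on $\{P_i\}\cup\{Q_j\}$ so that the identification is exact in $\mathfrak{X}_{C'}$, and transport back to $\mathfrak{X}_C$ along the equivariant inclusion $\mathfrak{X}_C \hookrightarrow \mathfrak{X}_{C'}$, which preserves vertex stabilizers. Distinct tuples $(a_1,\ldots,a_s)$ give distinct rel $\partial C$-classes, since the second summand is pinned to $\mathcal{L}^{-1}$ while a rel $\partial C$-twist acts diagonally on both summands.
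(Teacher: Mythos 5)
There is a genuine gap in the construction, concentrated exactly at the point you flag as ``the main technical obstacle'': your remedy does not actually repair it. By declaring the images of $s$ and its dual to be the coordinate axes $Ke_1, Ke_2$ of $V=K^2$, you change the embedding of $\mathcal{L}\oplus\mathcal{L}^{-1}$ into the constant sheaf $V$ so that it no longer restricts to the \emph{standard} trivialization over $C$ (its stalk at a zero or pole $Q\in C$ of $s$ is $\pi_Q^{-v_Q(s)}\mathcal{O}_Qe_1+\pi_Q^{v_Q(s)}\mathcal{O}_Qe_2\neq\mathcal{O}_Q^2$). Consequently the sheaf that \prettyref{prop:quotient} attaches to your lattice tuple $(L_1^{a_1},\dots,L_s^{a_s})$ --- which by definition has the standard stalks at every $Q\notin\{P_i\}$ --- is $\mathcal{M}(\textstyle\sum a_iP_i)\oplus\mathcal{M}^{-1}$ with $\mathcal{M}=\mathcal{O}(\sum_i n_iP_i)$, not $\mathcal{L}(\sum a_iP_i)\oplus\mathcal{L}^{-1}$. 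Since $\mathcal{M}|_C$ is trivial while $\mathcal{L}|_C$ need not be, your apartment lies in the wrong connected component of $\mathfrak{P}_C$ whenever $\mathcal{L}|_C\not\cong\mathcal{O}_C$. Passing to $C'$ and transporting back does not help: the inclusion $\mathfrak{X}_C\hookrightarrow\mathfrak{X}_{C'}$ forces the standard lattice classes at the $Q_j$, and the slice of your enlarged apartment satisfying that constraint carries exactly the same bundles $\mathcal{M}(\sum a_iP_i)\oplus\mathcal{M}^{-1}$; the class of $\mathcal{L}$ rel $\partial C$ is simply not reachable from an apartment whose frame consists of coordinate axes of a basis adapted to the standard lattices over $C$.

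The correct construction, which is the one in the paper, keeps the standard trivialization over $C$ fixed and instead moves the frame: take a lattice tuple $(L_1,\dots,L_s)$, i.e.\ a double coset representative $(M_1,\dots,M_s)$, whose associated sheaf \emph{is} $\mathcal{L}\oplus\mathcal{L}^{-1}$, and let the frame be the pair of lines $K\cdot\mathcal{L},\,K\cdot\mathcal{L}^{-1}\subset V$ determined by the two summands of that subsheaf --- equivalently, conjugate the standard torus and apartment by $(M_1,\dots,M_s)$. Then twisting the $\mathcal{L}$-direction at $P_i$ by $\pi_i^{a_i}$ really produces $\mathcal{L}(-\sum a_iP_i)\oplus\mathcal{L}^{-1}$, and the constant torus $\bigl(M_i\op{T}_2(k)M_i^{-1}\bigr)_i$ lies in $\op{GL}_2(k[C])$ because it acts by scaling the two summands of the vector bundle (this is where \prettyref{prop:aut} and \prettyref{prop:action}, rather than the literal inclusion $\op{T}_2(k)\subset\op{GL}_2(k[C])$, are needed). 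Two smaller points: the enlargement to $C'$ is used in the paper to diagonalize the double coset representative for the \emph{stabilizer} computations, not to define the apartment; and your closing claim that distinct tuples $(a_1,\dots,a_s)$ give distinct rel $\partial C$-classes is false whenever $k[C]^\times\neq k^\times$ (if $\sum(a_i-b_i)P_i$ is principal the two bundles are isomorphic), though the lemma does not require it --- it only needs $\phi_{\mathcal{L}}$ to be injective into $\mathfrak{X}_C$, which holds since distinct tuples give distinct lattice classes.
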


\begin{proof}
Recall the double quotient description of the $0$-cells of
$\op{GL}_2(k[C])\backslash\mathfrak{X}_C$:
$$
\op{GL}_2(k[C])\backslash\prod_{i=1}^s\op{GL}_2(K)/
\prod_{i=1}^s(\op{GL}_2(\mathcal{O}_{P_i})\cdot K^\times). 
$$
Let $(M_1,\dots,M_s)\in\op{GL}_2(K)^s$ be a representative of the
double coset corresponding to the bundle
$\mathcal{L}\oplus\mathcal{L}^{-1}$. Denoting by $\op{T}_2(K)$ the
diagonal maximal torus of $\op{GL}_2(K)$, we consider the maximal
torus 
$$
\op{T}(\mathcal{L})=(M_1\op{T}_2(K)M_1^{-1},\dots,
M_s\op{T}_2(K)M_s^{-1})\subset \op{GL}_2(K)^s.  
$$ 
We define $\mathfrak{A}_{\mathcal{L}}$ to be the full cubical
subcomplex of $\mathfrak{X}_C$ whose $0$-cubes are the
$\op{T}(\mathcal{L})$-orbit of $(M_1,\dots,M_s)$.  By construction
this is an apartment of $\mathfrak{X}_C$, corresponding to a choice of
frame in each of the $s$ copies of $K^2$. 

(1) We see that the subgroup 
$(M_1\op{T}_2(k)M_1^{-1},\dots,
M_s\op{T}_2(k)M_s^{-1})\subset \op{T}(\mathcal{L})$
preserves lattice classes in $\mathfrak{A}_{\mathcal{L}}$ because
$\op{T}_2(k)\subseteq \op{GL}_2(\mathcal{O}_{P_i})$. In particular,
this subgroup stabilizes $\mathfrak{A}_{\mathcal{L}}$
\emph{pointwise}.  
On the other hand,
because $(M_1,\dots,M_s)$ represents a split rank two bundle
$\mathcal{L}\oplus\mathcal{L}^{-1}$, the point $(M_1,\dots,M_s)$ is
stabilized by a maximal torus of $\op{GL}_2(k)$. From
\prettyref{prop:action},  we know that the action of the stabilizer of
$\mathcal{L}\oplus\mathcal{L}^{-1}$  on the link is such that the
stabilizer contains a maximal torus which fixes pointwise the
intersection of $\mathfrak{A}_{\mathcal{L}}$ with the link
$\op{Lk}_{\mathfrak{X}_C}(\mathcal{L}\oplus\mathcal{L}^{-1})$. In particular,
all the vertices in the link
$\op{Lk}_{\mathfrak{A}_{\mathcal{L}}}(\mathcal{L}\oplus\mathcal{L}^{-1})$
have the same stabilizer as
$\mathcal{L}\oplus\mathcal{L}^{-1}$. An inductive argument shows that
this is true for all vertices in $\mathfrak{A}_{\mathcal{L}}$. This
implies that the intersection  
$$
(M_1\op{T}_2(k)M_1^{-1},\dots, M_s\op{T}_2(k)M_s^{-1})\cap
\op{GL}_2(k[C])
$$
contains a maximal torus of $\op{GL}_2(k)$. Then all cubes of
$\mathfrak{A}_{\mathcal{L}}$ are stabilized by this, and we see that
$\mathfrak{A}_{\mathcal{L}}\subset \mathfrak{P}_C$. 

(2) is easy to see: up to
$\prod_{i=1}^s(\op{GL}_2(\mathcal{O}_{P_i})\cdot K^\times)$, the
elements of $\op{T}_2^s$ are of the form
$$
A=\left(\left(\begin{array}{cc}
\pi_1^{a_1}&0\\0&1\end{array}\right),\cdots,
\left(\begin{array}{cc}
\pi_s^{a_s}&0\\0&1\end{array}\right)\right).
$$
But then the elements of $\op{T}(\mathcal{L})\cdot(M_1,\dots,M_s)$ are
those of the form
$$\left(M_1\cdot\left(\begin{array}{cc}
\pi_1^{a_1}&0\\0&1\end{array}\right),\cdots,
M_s\cdot\left(\begin{array}{cc}
\pi_s^{a_s}&0\\0&1\end{array}\right)\right),
$$
and these are exactly representatives for bundles of the form
$$
\left(\mathcal{L}\otimes\mathcal{O}(-a_1P_1)\otimes
\cdots\otimes\mathcal{O}(-a_sP_s)\right)\oplus\mathcal{L}^{-1}, \quad
a_i\in\mathbb{Z}. 
$$

Now  the morphism
$\phi_{\mathcal{L}}:\mathcal{Z}^s\to\mathfrak{A}_{\mathcal{L}}$ is
defined on $0$-cubes as 
$$
(a_1,\dots,a_s)\mapsto 
\left(M_1\cdot\left(\begin{array}{cc}
\pi_1^{a_1}&0\\0&1\end{array}\right),\cdots,
M_s\cdot\left(\begin{array}{cc}
\pi_s^{a_s}&0\\0&1\end{array}\right)\right),
$$
and we have seen above that this assignment indeed takes vertices to
$0$-cubes in $\mathfrak{A}_{\mathcal{L}}$. It is also easy to see that
if vertices span a cube in $\mathcal{Z}^s$, then their
$\phi_{\mathcal{L}}$-images span a cube in
$\mathfrak{A}_{\mathcal{L}}$. Therefore, the above definition of
$\phi_{\mathcal{L}}$ indeed provides a morphism of cubical complexes.
(3) follows because $\mathfrak{A}_{\mathcal{L}}$ is a Coxeter complex
for the affine Weyl group $\mathbb{Z}^s\rtimes\mathbb{Z}/2^s$, and
hence the morphism $\phi_{\mathcal{L}}$ has an obvious inverse mapping 
$$
\left(M_1\cdot\left(\begin{array}{cc}
\pi_1^{a_1}&0\\0&1\end{array}\right),\cdots,
M_s\cdot\left(\begin{array}{cc}
\pi_s^{a_s}&0\\0&1\end{array}\right)\right)\mapsto (a_1,\dots,a_s).
$$
\end{proof}

\begin{proposition}
\label{prop:torus}
Let $k$ be an algebraically closed field, let $\overline{C}$ be a
smooth projective curve and denote
$C=\overline{C}\setminus\{P_1,\dots,P_s\}$. Let $\mathcal{L}$ be a
line bundle on $\overline{C}$ such that
$\mathcal{L}|_C\not\cong\mathcal{L}^{-1}|_C$.  
\begin{enumerate}
\item 
The connected component of
$\op{GL}_2(k[C])\backslash\mathfrak{P}_C$ containing the vector bundle
$\mathcal{L}\oplus\mathcal{L}^{-1}$ is isomorphic to
$\mathcal{Z}^s/\mathcal{T}$. 
The stabilizers of cells are of the form
$(k^n\rtimes (k^\times)^2)\times_{k^\times}k[C]^\times$ for some $n$
depending on the cell, i.e., up 
to the unipotent radical all the cells have stabilizer
$\op{T}\times_{k^\times} k[C]^\times$ where $\op{T}$ is a maximal
torus in $\op{GL}_2(k)$.  
\item 
The connected component of 
$\op{SL}_2(k[C])\backslash\mathfrak{P}_C$ containing the vector bundle
$\mathcal{L}\oplus\mathcal{L}^{-1}$ is isomorphic to
$\mathcal{Z}^s/\mathcal{ST}$.
The stabilizers of cells are of the form 
$k^n\rtimes k^\times$ for some $n$ depending on the cell, i.e., up to
the unipotent radical all the cells have stabilizer a maximal torus in
$\op{SL}_2(k)$.  
\end{enumerate}
In both cases,  the quotient has the homotopy type of a torus whose 
dimension equals  the rank of $k[C]^\times/k^\times$. 
\end{proposition}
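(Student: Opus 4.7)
The plan is to realize the connected component as the image of the apartment $\mathfrak{A}_\mathcal{L}$ supplied by \prettyref{lem:apartment}, and to show that the composite
$$\mathcal{Z}^s \xrightarrow{\phi_\mathcal{L}} \mathfrak{A}_\mathcal{L} \hookrightarrow \mathfrak{P}_C \twoheadrightarrow \Gamma\backslash\mathfrak{P}_C$$
descends to an isomorphism of cubical complexes from $\mathcal{Z}^s/\mathcal{T}$ (resp. $\mathcal{Z}^s/\mathcal{ST}$) onto the component containing $[\mathcal{L}\oplus\mathcal{L}^{-1}]$. The argument proceeds in two stages: first identify the subgroup of translations of $\mathcal{Z}^s$ that is realized by elements of $\Gamma$ preserving $\mathfrak{A}_\mathcal{L}$, and then use the local analysis of \prettyref{prop:local}(b) to upgrade the resulting bijection of $0$-cells to an isomorphism of cubical complexes.

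For the factorization, the translation of $\mathcal{Z}^s$ by $(a_i)\in\mathcal{T}$ is realized on the apartment by diagonal multiplication with a function $\lambda\in k[C]^\times$ satisfying $\operatorname{div}(\lambda)=\sum a_iP_i$; such $\lambda$ exists precisely because $(a_i)\in\ker\phi$ in \prettyref{lem:picc}. The element $\operatorname{diag}(\lambda,1)$ lies in $\op{GL}_2(k[C])$, so $\mathcal{T}$-translates give the same point in the $\op{GL}_2$-quotient. For the $\op{SL}_2$-quotient the implementing element must have trivial determinant, forcing the form $\operatorname{diag}(\lambda,\lambda^{-1})$ whose induced translation of the apartment is by $(2v_i(\lambda))$; hence exactly $\mathcal{ST}=2\mathcal{T}$ is realized. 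Conversely, if two vertices $(a_i),(b_i)\in\mathcal{Z}^s$ map to the same $0$-cell in the $\op{GL}_2$-quotient, then the corresponding bundles on $\overline{C}$ are equivalent rel $\partial C$; Krull--Schmidt on $\overline{C}$ matches the line summands one-by-one, and the summand-swap alternative would produce $\mathcal{L}^2\otimes\mathcal{O}(\sum c_iP_i)\cong\mathcal{O}_{\overline{C}}$, hence $\mathcal{L}^2|_C\cong\mathcal{O}_C$, contradicting $\mathcal{L}|_C\not\cong\mathcal{L}^{-1}|_C$. This forces $(a_i-b_i)\in\mathcal{T}$, and carrying along the determinant trivialization from \prettyref{prop:quotient} tightens this to $(a_i-b_i)\in\mathcal{ST}$ in the $\op{SL}_2$ setting. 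Surjectivity onto the component is then immediate from \prettyref{lem:conn} and the Nagata sequence: any vertex in the component is a split bundle $\mathcal{M}_1\oplus\mathcal{M}_2$ with $\mathcal{M}_1|_C\cong\mathcal{L}|_C$ and $\mathcal{M}_2|_C\cong\mathcal{L}^{-1}|_C$, hence after tensoring rel $\partial C$ attains the form $(\mathcal{L}\otimes\mathcal{O}(\sum a_iP_i))\oplus\mathcal{L}^{-1}$ lying in the image of $\phi_\mathcal{L}$.

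The passage from a bijection of $0$-cells to an isomorphism of cubical complexes uses \prettyref{prop:local}(b): since $\mathcal{L}'\not\cong\mathcal{L}^{-1}$, the link of any vertex in $\Gamma\backslash\mathfrak{P}_C$ equals the corresponding apartment piece $\mathfrak{A}_x$, so the projection $\mathfrak{A}_\mathcal{L}\twoheadrightarrow\Gamma\backslash\mathfrak{P}_C$ is a local isomorphism of cubical complexes at every vertex. The stabilizer claims follow from \prettyref{prop:auto}: for $\mathcal{L}_1\oplus\mathcal{L}_2$ with $\mathcal{L}_1\not\cong\mathcal{L}_2$ the automorphism group is of the form $(k^\times)^2\ltimes H^0(\overline{C},\mathcal{L}_1\otimes\mathcal{L}_2^{-1})$ (reading the split so that the nontrivial $H^0$ lives on the appropriate side), and intersecting along a cube preserves the maximal torus while potentially shrinking the unipotent part; the formulation for $\op{GL}_2$ then picks up the $k[C]^\times$-factor as in \prettyref{prop:aut}(1), while the $\op{SL}_2$-version drops it. Finally $\mathcal{T}$ is a free abelian group of rank $r=\dim\ker\phi=\operatorname{rank}(k[C]^\times/k^\times)$ by \prettyref{lem:picc}(2) and acts freely by translations on $\mathcal{Z}^s\subset\mathbb{R}^s$, so $\mathcal{Z}^s/\mathcal{T}\simeq\mathbb{R}^s/\mathcal{T}\simeq(S^1)^r$; the same rank computation for $\mathcal{ST}=2\mathcal{T}$ gives the same torus dimension in the $\op{SL}_2$ case.

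The main obstacle is the careful bookkeeping of the determinant trivialization for the $\op{SL}_2$ case, which is where the factor of $2$ separating $\mathcal{ST}$ from $\mathcal{T}$ arises and which must be cross-checked against the double-coset description used in \prettyref{lem:apartment}. A secondary delicate point is the Krull--Schmidt matching of line summands under equivalence rel $\partial C$: the swap case must be translated from $\overline{C}$ back down to $C$ in order to invoke the hypothesis $\mathcal{L}|_C\not\cong\mathcal{L}^{-1}|_C$.
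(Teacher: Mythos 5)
Your proposal is correct and follows essentially the same route as the paper: the apartment $\mathfrak{A}_{\mathcal{L}}$ and the map $\phi_{\mathcal{L}}$ from \prettyref{lem:apartment}, translations realized by diagonal matrices built from units with prescribed divisor (with the determinant-one constraint producing the factor of $2$ separating $\mathcal{ST}$ from $\mathcal{T}$), a bijection on $0$-cells via the vector-bundle classification, the upgrade to an isomorphism of cubical complexes via the local analysis of \prettyref{prop:local}, and the stabilizer and homotopy-type statements from \prettyref{prop:aut} and \prettyref{lem:picc}. Your explicit Krull--Schmidt exclusion of the summand swap and your flagging of the determinant bookkeeping in the $\op{SL}_2$ injectivity step are points the paper's own proof passes over more quickly, but they do not change the argument.
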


\begin{proof}
We give the argument for $\op{SL}_2$, the argument for $\op{GL}_2$ is
similar. The proof is structured as follows: in step (1), we describe
a homomorphism $\tau:\mathcal{ST}\to \op{SL}_2(k[C])$ and show in step
(2) that the map $\phi_{\mathcal{L}}$ from \prettyref{lem:apartment}
is $\rho$-equivariant. Steps (3) to (5) deduce the claim from these
facts.  

(1) Recall from \prettyref{def:models} that $\mathcal{ST}$ was defined
as the subgroup of $\mathbb{Z}^s$ containing the tuples
$(2a_1,\dots,2a_s)$ with  $\sum_{i=1}^sa_i[P_i]=0$ in
$\op{Pic}(\overline{C})$.  This is a free abelian group isomorphic to 
$k[C]^\times/k^\times$: the condition on the sum in the Picard group
implies that $\sum a_i[P_i]$ is a principal divisor $(f)$, and $f\in
k[C]^\times/k^\times$. Choose an embedding $u:\mathcal{ST}\to
k[C]^\times$ which induces one such isomorphism $\mathcal{ST}\cong
k[C]^\times/k^\times$ and define a homomorphism $\tau:\mathcal{ST}\to
\op{SL}_2(K)^s$ by sending $(2a_1,\dots,2a_s)$ to  
$$
(M_1,\dots,M_s)\cdot\left(\begin{array}{cc}
u(2a_1,\dots,2a_s)&0\\0&u(2a_1,\dots,2a_s)^{-1}
\end{array}\right)\cdot(M_1^{-1},\dots,M_s^{-1}).
$$
By definition, $\tau(2a_1,\dots,2a_s)$ stabilizes the image of
$(M_1,\dots,M_s)$ in $\op{SL}_2(k[C])\backslash\mathfrak{P}_C$  if 
$\sum_{i=1}^sa_i[P_i]=0$ in $\op{Pic}(\overline{C})$: it sends the
vector bundle $\mathcal{L}\oplus\mathcal{L}^{-1}$ to
$(\mathcal{L}\otimes 
\mathcal{O}(-2a_1P_1)\otimes\cdots\otimes\mathcal{O}(-2a_sP_s))
\oplus\mathcal{L}^{-1}$, but these are both isomorphic under the above
condition. In this case, $\tau(2a_1,\dots,2a_s)\in\op{SL}_2(k[C])$
because it stabilizes the double coset $(M_1,\dots,M_s)$ in
$\op{SL}_2(k[C])\backslash \prod
\op{GL}_2(K)/(\prod\op{GL}_2(\mathcal{O}_{P_i})\cdot K^\times)$. We
see that $\tau:\mathcal{ST}\to\op{SL}_2(K)^s$ factors through a
homomorphism $\mathcal{ST}\to\op{SL}_2(k[C])$ which we will still
denote $\tau$. 

(2) We show that the embedding $\phi_{\mathcal{L}}$ from
\prettyref{lem:apartment} is $\tau$-equivariant. The element
$(2a_1,\dots,2a_s)$ acts on the model complex $\mathcal{Z}^s$ by the
evident translation. The element $\tau(2a_1,\dots,2a_2)$ sends 
$(M_1,\dots,M_s)\in\mathfrak{A}_{\mathcal{L}}$ to
$$
\left(M_1\cdot\left(\begin{array}{cc}
\pi_1^{a_1} & 0\\0&\pi_1^{-a_1}\end{array}\right),\cdots,
M_s\cdot\left(\begin{array}{cc}
\pi_s^{a_s} & 0\\0&\pi_s^{-a_s}\end{array}\right)\right)
$$
which is also the translation by $(2a_1,\dots,2a_s)$ on
$\mathfrak{A}_{\mathcal{L}}$. 

(3) From (1) and (2) above we obtain a map of quotient cell complexes
$$
\overline{\phi_{\mathcal{L}}}:\mathcal{Z}^s/\mathcal{ST}\to
\op{SL}_2(k[C])\backslash\mathfrak{P}_C.
$$
We first show that this map is surjective onto the
$\mathcal{L}$-component of $\op{SL}_2(k[C])\backslash\mathfrak{P}_C$. 
By \prettyref{lem:apartment} (2), we know that the $0$-cells of the
image are exactly the equivalence (rel $\partial C$) classes of rank
two bundles of the form
$(\mathcal{L}\otimes\mathcal{O}(a_1P_1)\otimes\cdots\otimes
\mathcal{O}(a_sP_s))\oplus\mathcal{L}^{-1}$. For each
$(a_1,\dots,a_s)\in \mathcal{Z}^s$, the map
$\overline{\phi_{\mathcal{L}}}$ induces an isomorphism from the star
of $(a_1,\dots,a_s)$ in $\mathcal{Z}^s$ to the star of
$\overline{\phi_{\mathcal{L}}}$ in
$\op{SL}_2(k[C])\backslash\mathfrak{P}_C$, by
\prettyref{prop:local}. Therefore, the map
$\overline{\phi_{\mathcal{L}}}$ is a surjection onto the
$\mathcal{L}$-component of $\op{SL}_2(k[C])\backslash\mathfrak{P}_C$. 

(4) We show that the map $\overline{\phi_{\mathcal{L}}}$ is
injective. First, we consider $0$-cells. Let $(a_1,\dots,a_s)$ and
$(b_1,\dots,b_s)$ be two vertices of $\mathcal{Z}^s$. The vector
bundles associated to these two vertices are 
$$
(\mathcal{L}\otimes\mathcal{O}(a_1P_1)\otimes\cdots\otimes
\mathcal{O}(a_sP_s))\oplus\mathcal{L}^{-1} \textrm{ and }
(\mathcal{L}\otimes\mathcal{O}(b_1P_1)\otimes\cdots\otimes
\mathcal{O}(b_sP_s))\oplus\mathcal{L}^{-1}.
$$
These vector bundles are isomorphic precisely when
$\sum(a_i-b_i)P_i=0$ in $\op{Pic}(\overline{C})$. But then the two
original vertices are $\mathcal{ST}$-conjugate. This shows injectivity
on $0$-cells. Injectivity for $n$-cells now follows from injectivity
of $0$-cells and the fact - established in \prettyref{prop:local}  and
used in step (3) above - that $\overline{\phi_{\mathcal{L}}}$ induces
isomorphisms on stars. 

(5) Finally, the stabilizer statements follow from
\prettyref{prop:aut} and the fact that all vector bundles are split
with non-isomorphic direct summands. It is also clear that the
quotient modulo the group of translations has the homotopy type of a
torus of the rank claimed.
\end{proof}

\begin{proposition}
\label{prop:normalizer}
Let $k$ be an algebraically closed field, let $\overline{C}$ be a
smooth projective curve and denote
$C=\overline{C}\setminus\{P_1,\dots,P_s\}$. Let $\mathcal{L}$ be a
line bundle on $\overline{C}$ such that
$\mathcal{L}|_C\cong\mathcal{L}^{-1}|_C$.   
\begin{enumerate}
\item 
The connected component of
$\op{GL}_2(k[C])\backslash\mathfrak{P}_C$ containing the vector bundle
$\mathcal{L}\oplus\mathcal{L}$ is isomorphic to
$\mathcal{Z}^s/\mathcal{N}$. 
The image of $(0,\dots,0)$ is stabilized by $\op{GL}_2(k)\times_{k^\times}
k[C]^\times$. 
The stabilizers of all other cells are of the form
$(k^n\rtimes (k^\times)^2)\times_{k^\times} k[C]^\times$ for some $n$
depending on the cell, i.e., up to the unipotent radical all the other
cells have stabilizer $\op{T}\times_{k^\times} k[C]^\times$ where $\op{T}$ is a
maximal torus in $\op{GL}_2(k)$.  
\item
 The connected component of
$\op{SL}_2(k[C])\backslash\mathfrak{P}_C$ containing the vector bundle
$\mathcal{L}\oplus\mathcal{L}$ is isomorphic to
$\mathcal{Z}^s/\mathcal{SN}$. 
There are $2^{\operatorname{rk}\mathcal{T}}$ points with stabilizer
$\op{SL}_2(k)$, they are the $\mathcal{ST}$-conjugacy classes of
$\mathcal{T}$-translates of $(0,\dots,0)$. 
The stabilizers of all other cells are of the form 
$k^n\rtimes k^\times$ for some $n$ depending on the cell, i.e., up to
the unipotent radical all the other cells have stabilizer a maximal
torus in $\op{SL}_2(k)$.  
\end{enumerate}
\end{proposition}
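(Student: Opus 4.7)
The plan is to mirror the proof of \prettyref{prop:torus}, extending the torus equivariance to the full normalizer via a Weyl-type element that exchanges the two summands of $\mathcal{L}\oplus\mathcal{L}^{-1}$. I will reuse the apartment $\mathfrak{A}_{\mathcal{L}}\subset\mathfrak{P}_C$ and its parametrization $\phi_{\mathcal{L}}:\mathcal{Z}^s\to\mathfrak{A}_{\mathcal{L}}$ from \prettyref{lem:apartment}, together with the torus homomorphism $\tau$ from step (1) of the proof of \prettyref{prop:torus}. Since $\mathcal{L}|_C\cong\mathcal{L}^{-1}|_C$, after possibly twisting within the equivalence (rel $\partial C$) class we may arrange that the base bundle is $\mathcal{L}\oplus\mathcal{L}$; then \prettyref{prop:auto}(2a) gives $\Aut(\mathcal{L}\oplus\mathcal{L})\cong\op{GL}_2(k)$, and via \prettyref{prop:aut}(3) the Weyl element $\left(\begin{array}{cc} 0 & 1\\ -1 & 0\end{array}\right)\in\op{SL}_2(k)$ produces an element $w\in\op{SL}_2(k[C])$ stabilizing the base vertex. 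A direct calculation (left multiplication of $\op{diag}(\pi_i^{a_i},1)$ by $w$ yields $\op{diag}(1,\pi_i^{a_i})$, which equals $\op{diag}(\pi_i^{-a_i},1)$ modulo $K_{v_i}^\times$) shows that under $\phi_{\mathcal{L}}$, the action of $w$ on $\mathfrak{A}_{\mathcal{L}}$ corresponds to the point inversion $(a_1,\dots,a_s)\mapsto(-a_1,\dots,-a_s)$.

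Combining $\tau$ with $w$ then produces a homomorphism $\rho:\mathcal{SN}\to\op{SL}_2(k[C])$ (and analogously $\mathcal{N}\to\op{GL}_2(k[C])$): the point inversion at $a\in\mathcal{T}$ is realized by $\tau(2a)\cdot w$, and the composition of two such is an element of $\tau(\mathcal{ST})$. The map $\phi_{\mathcal{L}}$ is $\rho$-equivariant by construction, hence descends to a cellular map $\overline{\phi_{\mathcal{L}}}:\mathcal{Z}^s/\mathcal{SN}\to\op{SL}_2(k[C])\backslash\mathfrak{P}_C$. Its image lies in the $\mathcal{L}$-component by \prettyref{lem:conn}, it is a local isomorphism by \prettyref{prop:local}, and injective on $0$-cells by an argument analogous to step (4) of the proof of \prettyref{prop:torus}: two vertex bundles $\mathcal{L}\otimes\mathcal{O}(\sum a_iP_i)\oplus\mathcal{L}^{-1}$ and $\mathcal{L}\otimes\mathcal{O}(\sum b_iP_i)\oplus\mathcal{L}^{-1}$ are isomorphic over $C$ precisely when $a-b\in\mathcal{ST}$ or $a+b\in\mathcal{T}$, which is exactly $\mathcal{SN}$-conjugacy.

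The stabilizer statements then follow by unwinding the definitions. The reflection part of $\mathcal{SN}$ has fixed-point set $\mathcal{T}\subset\mathbb{Z}^s$, and modulo $\mathcal{ST}=2\mathcal{T}$ this yields the $\mathcal{T}/\mathcal{ST}\cong(\mathbb{Z}/2)^{\op{rk}\mathcal{T}}$ special points of the statement. At such a vertex the associated bundle has the form $\mathcal{L}'\oplus\mathcal{L}'$, so \prettyref{prop:auto}(2a) together with \prettyref{prop:aut}(3) gives stabilizer $\op{SL}_2(k)$. At any other cell only the torus part of the stabilizer survives, and the stabilizer description modulo unipotent radical follows from \prettyref{prop:action}(b) and \prettyref{prop:local}(b). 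The $\op{GL}_2$ case is identical with $\mathcal{T}$ in place of $\mathcal{ST}$ and with the central $k[C]^\times$-factor appended as in \prettyref{prop:aut}(1); since $\mathcal{N}$ acts transitively on the reflection fixed set $\mathcal{T}$, there is a single orbit of $\op{GL}_2(k)$-stabilized vertices, namely the image of $(0,\dots,0)$. The main obstacle will be to exhibit $w$ explicitly enough to verify the equivariance computation above: although $w$ is a constant matrix in $\op{SL}_2(k)\subset\op{SL}_2(k[C])$ under the trivialization of $(\mathcal{L}\oplus\mathcal{L}^{-1})|_C$ provided by the chosen isomorphism $\mathcal{L}|_C\cong\mathcal{L}^{-1}|_C$, one must carefully track how this trivialization interacts with the double-coset representatives $(M_1,\dots,M_s)$ and the elementary transformations in $\mathfrak{A}_{\mathcal{L}}$.
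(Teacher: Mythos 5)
Your proof follows essentially the same route as the paper: the paper's homomorphism $\nu:\mathcal{SN}\to\op{SL}_2(k[C])$ sends the point inversion $r(a_1,\dots,a_s)$ to $(M_i)\cdot\left(\begin{smallmatrix}0&u\\-u^{-1}&0\end{smallmatrix}\right)\cdot(M_i^{-1})$, which is precisely your $\tau(2a)\cdot w$, and the remaining steps (equivariance of $\phi_{\mathcal{L}}$, descent to the quotient, surjectivity and injectivity via \prettyref{prop:local}, stabilizers via \prettyref{prop:aut}) coincide with the paper's. One small correction to your injectivity step: since the point inversions of $\mathcal{SN}$ have centers in $\mathcal{T}$, they are the maps $x\mapsto s-x$ with $s\in 2\mathcal{T}=\mathcal{ST}$, so $\mathcal{SN}$-conjugacy of vertices reads $a-b\in\mathcal{ST}$ \emph{or} $a+b\in\mathcal{ST}$ (not $a+b\in\mathcal{T}$); this is what matches the $\op{SL}_2$-orbit condition, which carries a parity constraint coming from the fixed determinant trivialization in \prettyref{prop:quotient}.
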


\begin{proof}
We give the argument for $\op{SL}_2$, the argument for $\op{GL}_2$ is
similar but easier. The proof structure is similar to the proof of
\prettyref{prop:torus}. 

(1) We construct a homomorphism $\nu:\mathcal{SN}\to\op{SL}_2(k[C])$
as follows: on the subgroup $\mathcal{ST}$, the morphism $\nu$ agrees
with $\tau$. Now for $\sum_{i=1}^s a_i[P_i]=0$ in
$\op{Pic}(\overline{C})$ and $r(a_1,\dots,a_s)$ the point-reflection
with center $(a_1,\dots,a_s)$, the map $\nu$ is defined to be   
$$
(M_1,\dots,M_s)\cdot\left(\begin{array}{cc}
0 & u(a_1,\dots,a_s)\\-u(a_1,\dots,a_s)^{-1}&0
\end{array}\right)\cdot(M_1^{-1},\dots,M_s^{-1}).
$$
Then $\nu(r(a_1,\dots,a_s))$  again stabilizes the image of
$$
\left(M_1\cdot\left(\begin{array}{cc}
1&0\\0&\pi_1^{a_1}\end{array}\right),\dots,M_s\cdot
\left(\begin{array}{cc}
1&0\\0&\pi_s^{a_s}\end{array}\right)\right)
$$
in $\op{SL}_2(k[C])\backslash\mathfrak{P}_C$ under
the condition $\sum_{i=1}^s a_i[P_i]=0$. We see that
$\nu:\mathcal{SN}\to\op{SL}_2(K)^s$ factors through a 
homomorphism $\mathcal{SN}\to\op{SL}_2(k[C])$ which we will still
denote $\nu$. 

(2) We show that the embedding $\phi_{\mathcal{L}}$ from
\prettyref{lem:apartment} is $\nu$-equivariant. Equivariance on the
subgroup $\mathcal{ST}$ follows from (2) in the proof of
\prettyref{prop:torus}. We need to show that $\nu(r(a_1,\dots,a_s))$
is again a point-reflection. 
The definition of $\nu(r(a_1,\dots,a_s))$ shows that it sends the
coset 
$$
\left(M_1\cdot\left(\begin{array}{cc}
1&0\\0&\pi_1^{b_1}\end{array}\right),\dots,M_s\cdot
\left(\begin{array}{cc}
1&0\\0&\pi_s^{b_s}\end{array}\right)\right)
$$
to the coset
$$ 
\left(M_1\cdot\left(\begin{array}{cc}
\pi_1^{b_1-2a_1}&0\\0&1\end{array}\right),\dots,M_s\cdot
\left(\begin{array}{cc}
\pi_s^{b_s-2a_s}&0\\0&1\end{array}\right)\right).
$$
This proves the claim.

(3) The equivariance from (1) and (2) shows  that
we obtain a map of quotient cell complexes
$$
\overline{\phi_{\mathcal{L}}}:\mathcal{Z}^s/\mathcal{SN}\to
\op{SL}_2(k[C])\backslash\mathfrak{P}_C.
$$
Surjectivity of $\overline{\phi_{\mathcal{L}}}$ onto the
$\mathcal{L}$-component of $\op{SL}_2(k[C])\backslash\mathfrak{P}_C$
follows as in the proof of \prettyref{prop:torus} from the results in
\prettyref{lem:apartment} and \prettyref{prop:local}. 

(4) For injectivity on $0$-cells, let $(a_1,\dots,a_s)$ and
$(b_1,\dots,b_s)$ be vertices of $\mathcal{Z}^s$ which induce
isomorphic vector bundles
$$
(\mathcal{L}\otimes\mathcal{O}(a_1P_1)\otimes\cdots\otimes
\mathcal{O}(a_sP_s))\oplus\mathcal{L} \textrm{ and }
(\mathcal{L}\otimes\mathcal{O}(b_1P_1)\otimes\cdots\otimes
\mathcal{O}(b_sP_s))\oplus\mathcal{L}.
$$
The isomorphism of vector bundles implies that the line bundles
$\mathcal{O}(a_1P_1)\otimes\cdots\otimes\mathcal{P}(a_sP_s)$ and
$\mathcal{O}(b_1P_1)\otimes\cdots\otimes\mathcal{P}(b_sP_s)$ are
either isomorphic or inverses. The case of isomorphisms is the one
handled in (4) of the proof of \prettyref{prop:torus}. If they are
inverses, then their tensor product is a trivial bundle, and the
corresponding vector bundles are related by a point reflection with
center corresponding to the vector bundle
$(\mathcal{L}\otimes\mathcal{O}((a_1+b_1)P_1)\otimes\cdots\otimes  
\mathcal{O}((a_s+b_s)P_s))\oplus\mathcal{L}$. The tuples
$(a_1,\dots,a_s)$ and $(b_1,\dots,b_s)$ are then
$\mathcal{SN}$-conjugate, and we get injectivity on
$0$-cells. Injectivity for $n$-cells then follows again from
\prettyref{prop:local}.

(5) Finally, the stabilizer statements follow from
\prettyref{prop:aut} and the fact that all vector bundles are split
with non-isomorphic direct summands. 
\end{proof}

\subsection{Functoriality}
From \prettyref{prop:functor1}, we know that morphisms of curves
$f:\overline{C}\to\overline{D}$ induce morphisms of parabolic
subcomplexes and parabolic quotients. Now that we have precise
knowledge of the structure of the parabolic quotients, we can also
describe precisely the effect of the induced morphisms: 

\begin{proposition}
\label{prop:functor2}
Let $k$ be an algebraically closed field. 
Let $f:\overline{D}\rightarrow \overline{C}$ be a finite morphism of
smooth projective curves over $k$, let $P_1,\dots,P_s$ be
points on $\overline{C}$, and let $Q_1,\dots,Q_t$ be points on
$\overline{D}$ not in the preimage of the $P_i$. 
Set $C=\overline{C}\setminus\{P_1,\dots,P_s\}$ and
$D=\overline{D}\setminus(\{f^{-1}(\{P_1,\dots,P_s\}) \cup
\{Q_1,\dots,Q_t\})$. 

Then we have the following assertions for the induced morphism
$f^\ast:\mathfrak{P}_C\to\mathfrak{P}_D$ of
\prettyref{prop:functor1}: 
\begin{enumerate}
\item 
The composition 
$$
\mathcal{K}(C)\to\pi_0(\op{GL}_2(k[C])\backslash\mathfrak{P}_C)
\stackrel{\pi_0(f^\ast)}{\longrightarrow}
\pi_0(\op{GL}_2(k[D])\backslash\mathfrak{P}_D)\to\mathcal{K}(D)
$$
is induced from pullback of line bundles
$f^\ast:\op{Pic}(C)\to\op{Pic}(D)$, where the first and last map are the
bijections of \prettyref{lem:conn}.
\item The morphism from a given component to the
  image component can be described on model complexes as follows: the
  map $\mathcal{Z}^s\to\mathcal{Z}^{\tilde{s}+t}$ maps the $i$-th
  coordinate diagonally to $f^{-1}(P_i)$ many coordinates of
  $\mathcal{Z}^{\tilde{s}+t}$.
\item The composition
  $k[C]^\times/k^\times\cong\mathcal{T}_C\to\mathcal{T}_D\cong
  k[D]^\times/k^\times$ is induced from the natural map
  $f^\ast:k[C]\to k[D]$  on function rings.
\end{enumerate} 
\end{proposition}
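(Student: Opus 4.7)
The plan is to verify the three statements directly by combining the explicit construction of $f^\ast$ from \prettyref{prop:functor1} with the explicit models for the components of the quotients built in \prettyref{lem:apartment}, \prettyref{prop:torus}, and \prettyref{prop:normalizer}.

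First I would handle part (1). The bijection of \prettyref{lem:conn} sends the component containing a vertex $\mathcal{E}\cong\mathcal{L}\oplus\mathcal{L}^{-1}$ to the class $[\mathcal{L}|_C]\in\mathcal{K}(C)$. By \prettyref{prop:functor1}, the induced morphism on quotients of buildings is pullback of vector bundles. Since pullback commutes with direct sums and inversion of line bundles, $f^\ast(\mathcal{L}\oplus\mathcal{L}^{-1})\cong f^\ast\mathcal{L}\oplus(f^\ast\mathcal{L})^{-1}$, and so the induced map on $\pi_0$ sends the class of $\mathcal{L}|_C$ to the class of $f^\ast\mathcal{L}|_D$. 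This is the restriction to $\mathcal{K}$ of $f^\ast:\op{Pic}(C)\to\op{Pic}(D)$, which is well defined because pullback respects the involution $\iota$.

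Next I would turn to part (2). By the construction in the proof of \prettyref{lem:apartment}, the vertex $(a_1,\dots,a_s)\in\mathcal{Z}^s$ lying above the component of $\mathcal{L}\oplus\mathcal{L}^{-1}$ corresponds to the coset of $(M_i\cdot\op{diag}(\pi_i^{a_i},1))_i$. Under the morphism $f^\ast$ constructed in \prettyref{prop:functor1}, the $i$-th factor is sent diagonally to the factors indexed by $Q\in f^{-1}(P_i)$, with the entry at $Q$ being $f^\ast M_i\cdot\op{diag}((f^\ast\pi_i)^{a_i},1)$. Since $v_Q(f^\ast\pi_i)=e_Q$, the image vertex on $\mathfrak{A}_{f^\ast\mathcal{L}}$ has coordinates $e_Q a_i$ at each $Q\in f^{-1}(P_i)$; under the subdivision of $\mathcal{Z}^s$ from \prettyref{prop:functor1} (which trivialises the ramification indices on the level of cubes), this is precisely the diagonal embedding from the $i$-th coordinate of $\mathcal{Z}^s$ into the $f^{-1}(P_i)$ coordinates of $\mathcal{Z}^{\tilde{s}+t}$.

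Finally, for part (3), I would unwind the identifications $\mathcal{T}_C\cong k[C]^\times/k^\times$ from the proof of \prettyref{prop:torus}. An element $(a_1,\dots,a_s)\in\mathcal{T}_C$ satisfies $\sum a_i[P_i]=0$ in $\op{Pic}(\overline{C})$, and so equals the divisor of some $u\in k[C]^\times/k^\times$. Pulling back, $f^\ast u\in k[D]^\times/k^\times$ has divisor $\sum_i\sum_{Q\in f^{-1}(P_i)}a_i e_Q\cdot Q$, which equals the divisor of the element in $\mathcal{T}_D$ determined by the image of $(a_1,\dots,a_s)$ under the map on model complexes described in (2). Hence the two triangles identifying $\mathcal{T}_C\to\mathcal{T}_D$ with $f^\ast:k[C]^\times/k^\times\to k[D]^\times/k^\times$ commute.

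The main obstacle I expect is the book-keeping for ramification in part (2): one must be careful that the subdivision convention used in \prettyref{prop:functor1} simultaneously produces a well-defined morphism of cubical complexes \emph{and} is compatible with the embeddings of $\mathcal{T}_C$ and $\mathcal{T}_D$ into their respective apartments. Once the conventions on subdivision and on the choice of representatives $M_i$ (and their images $f^\ast M_i$) are pinned down, the three statements reduce to direct inspection of the formulas above.
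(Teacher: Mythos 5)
Your proposal is correct and follows essentially the same route as the paper, which also deduces all three parts by direct inspection of the explicit constructions in \prettyref{prop:functor1}, \prettyref{lem:apartment} and \prettyref{prop:torus}; the paper's own argument is in fact much terser, and your careful tracking of the ramification indices $e_Q$ and the subdivision convention in part (2) supplies detail the paper leaves implicit.
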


\begin{proof}
(1) is clear and follows because $f^\ast$ on the vertices of the
quotients is given by pullback of vector bundles. 

(2) the map on model complexes is the same one as on the standard
apartment of the  building, and hence the same as the one described in
\prettyref{prop:functor1}. 

(3) the map is induced from the inclusion of the units in the function
fields and the conjugation action on certain apartments. Therefore, it
is the natural map.
\end{proof}

\section{Equivariant homology of the parabolic subcomplex}
\label{sec:parhlgy}

In this section, we will use the description of the quotients
$\Gamma\backslash\mathfrak{P}_C$  to compute the $\Gamma$-equivariant
homology of $\mathfrak{P}_C$, for $\Gamma$ one of the linear groups
$\op{(P)GL}_2(k[C])$ and $\op{(P)SL}_2(k[C])$. The main results are
\prettyref{prop:hlgytorus} and \prettyref{prop:hlgynorm} for the two
types of connected components: the components for non-$2$-torsion line 
bundles contribute the homology of $k[C]^\times$ while the components
for $2$-torsion line bundles contribute groups related to a normalizer
of $k[C]^\times$. Using these explicit descriptions, we can also
completely describe the morphisms on parabolic homology induced from
any quasi-finite morphism of affine curves $f:D\to C$.

\subsection{Recollection on equivariant homology}

We shortly recall equivariant homology and the Borel isotropy spectral
sequence for computing it, cf. \cite[Section VII]{brown:book} or
\cite[Appendix A]{knudson:book}. 

A $G$-complex is a CW-complex $X$ together with an action $G\times
X\rightarrow X$ which is cellular. If $X$ is a $G$-complex, then there
is an action of $G$ on the cellular chain complex $\op{C}_\bullet(X)$,
and the corresponding homology groups 
$$
\op{H}^G_\bullet(X,M):=\op{H}_\bullet(G,\op{C}_\bullet(X)\otimes_{\mathbb{Z}[G]}M)
$$
are called \emph{equivariant homology groups of $(G,X)$}.
If $X$ is contractible (or even just acyclic), then the equivariant
homology of $X$ is group homology, i.e. $\op{H}^G_\bullet(X,M)\cong
\op{H}_\bullet(G,M)$. 

For a $G$-complex $X$, the $G$-equivariant homology of $X$ can be
computed by means of the following spectral sequence, which arises
from the stupid filtration of $\op{C}_\bullet(X)\otimes_{\mathbb{Z}[G]}M$:
$$
E_{p,q}^1=\bigoplus_{\sigma\in\Sigma_p}\op{H}_q(G_\sigma,M_\sigma)\Rightarrow
\op{H}^G_{p+q}(X,M). 
$$
In the above, $\Sigma_p$ denotes a set of representatives for the
$G$-action on $\op{C}_p(X)$. The module $M_\sigma$ is the coefficient
$G$-module $M$ twisted by the orientation character
$\chi_\sigma:G_\sigma\rightarrow\mathbb{Z}/2$ of the stabilizer group
$G_\sigma$. In the special case where each cell stabilizer fixes the
cell pointwise, the orientation character is trivial. This can always
be achieved by a suitable subdivision of cells. 
The first differential is induced from the boundary map of the complex
$X$ and inclusions of stabilizers, cf. \cite[VII.8]{brown:book}:
$$
d^1|_{\op{H}_q(G_\sigma,M_\sigma)}:\op{H}_q(G_\sigma,M_\sigma)\mapsto
\bigoplus_{\tau\subseteq \sigma} \op{H}_q(G_\tau,M_\tau)
$$

In the following we consider the groups $\op{(P)GL}_2(k[C])$
resp. $\op{(P)SL}_2(k[C])$ acting on  the cubical complex
$\mathfrak{P}_C$. 

\subsection{Components of torus type} 

We first evaluate the equivariant homology of the components
corresponding to line bundles $\mathcal{L}$ with
$\mathcal{L}\not\cong\mathcal{L}^{-1}$. These components are called
``of torus type'' for two reasons. On the one hand, the quotient has
the homotopy type of a torus. On the other hand, the equivariant
homology is mainly influenced by the intersection of $\op{GL}_2(k[C])$
with a maximal torus in $\op{GL}_2(K)^s$.

\begin{proposition}
\label{prop:hlgytorus}
Let $k$ be an algebraically closed field, let $\overline{C}$ be a
smooth projective curve and denote
$C=\overline{C}\setminus\{P_1,\dots,P_s\}$. Let $\mathcal{L}$ be a
line bundle on $\overline{C}$ such that
$\mathcal{L}|_C\not\cong\mathcal{L}|_C^{-1}$ and denote by
$\mathfrak{P}_C(\mathcal{L})$ the connected component of
$\mathfrak{P}_C$ corresponding to $\mathcal{L}$. 
\begin{enumerate}
\item 
The $\mathcal{T}$-equivariant inclusion
$\mathcal{Z}^s\to\mathfrak{P}_C$ from \prettyref{prop:torus} induces
isomorphisms on homology  
\begin{eqnarray*}
\op{H}_\bullet^{\op{GL}_2(k[C])}(\mathfrak{P}_C(\mathcal{L}),\mathbb{Z})
&\cong & \op{H}_\bullet((k[C]^\times)^2,\mathbb{Z}) \\
\op{H}_\bullet^{\op{PGL}_2(k[C])}(\mathfrak{P}_C(\mathcal{L}),\mathbb{Z})
&\cong& \op{H}_\bullet(k[C]^\times,\mathbb{Z})
\end{eqnarray*}
\item The $\mathcal{ST}$-equivariant inclusion
  $\mathcal{Z}^s\to\mathfrak{P}_C$ from \prettyref{prop:torus} induces
  an isomorphism on homology
$$\op{H}_\bullet^{\op{(P)SL}_2(k[C])}(\mathfrak{P}_C(\mathcal{L}),\mathbb{Z})
\cong\op{H}_\bullet(k[C]^\times,\mathbb{Z}).
$$
\end{enumerate}
\end{proposition}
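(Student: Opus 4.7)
The plan is to reduce the $\op{SL}_2(k[C])$-equivariant homology of $\mathfrak{P}_C(\mathcal{L})$ to the group homology of the setwise stabilizer $H = \Stab(\mathfrak{A}_\mathcal{L}; \op{SL}_2(k[C]))$ of the apartment, and then to identify $H$ abstractly with $k[C]^\times$. The $\op{GL}_2$-case is parallel, using the full diagonal torus $(k^\times)^2$ in place of $\{\op{diag}(a,a^{-1})\}$ and $\mathcal{T}$ in place of $\mathcal{ST}$; the $\op{PGL}_2$ and $\op{PSL}_2$ cases follow by dividing out the central $k^\times$. I describe the $\op{SL}_2$-case below.

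The results of \prettyref{prop:torus} show that $\mathfrak{P}_C(\mathcal{L})$ is the $\op{SL}_2(k[C])$-orbit of $\mathfrak{A}_\mathcal{L}$ and that two cells of $\mathfrak{A}_\mathcal{L}$ lie in the same $\op{SL}_2(k[C])$-orbit if and only if they lie in the same $H$-orbit. Reformulated topologically, the natural $\op{SL}_2(k[C])$-equivariant map
$$
\op{SL}_2(k[C])\times_{H}\mathfrak{A}_\mathcal{L}\longrightarrow \mathfrak{P}_C(\mathcal{L})
$$
is an isomorphism of cubical $\op{SL}_2(k[C])$-complexes. Shapiro's lemma for equivariant homology then yields $\op{H}^{\op{SL}_2(k[C])}_\bullet(\mathfrak{P}_C(\mathcal{L}),\mathbb{Z})\cong \op{H}^{H}_\bullet(\mathfrak{A}_\mathcal{L},\mathbb{Z})$, and since $\mathfrak{A}_\mathcal{L}\cong \mathcal{Z}^s$ is contractible, the Borel construction collapses this to $\op{H}_\bullet(H,\mathbb{Z})$.

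It remains to identify $H$ with $k[C]^\times$. The group $H$ sits in an extension
$$
1\longrightarrow P\longrightarrow H\longrightarrow \mathcal{ST}\longrightarrow 1,
$$
where $P$ is the pointwise stabilizer of $\mathfrak{A}_\mathcal{L}$ and the quotient is the translation action identified in \prettyref{prop:torus}. The pointwise stabilizer equals the intersection of all cell stabilizers inside $\mathfrak{A}_\mathcal{L}$; by the explicit description in \prettyref{prop:action}(b), the unipotent portion of the stabilizer at the vertex with coordinates $(a_1,\dots,a_s)$ consists of lower-triangular elements subject to $v_i(f)\geq -a_i$, and letting $(a_1,\dots,a_s)$ range over all of $\mathbb{Z}^s$ forces $f=0$. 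Hence $P$ reduces to the diagonal torus $\{\op{diag}(a,a^{-1}) : a \in k^\times\}$ sitting inside the constant subgroup $\op{SL}_2(k)\subset \op{SL}_2(k[C])$. The homomorphism $\tau:\mathcal{ST}\to \op{SL}_2(k[C])$ constructed in the proof of \prettyref{prop:torus} lies in the same (conjugated) diagonal torus of $\op{SL}_2(K)$, hence commutes with $P$ and splits the extension, giving $H\cong k^\times\times \mathcal{ST}$. By \prettyref{lem:picc}, $k[C]^\times\cong k^\times\oplus\mathbb{Z}^{\dim\ker\phi}$ and $\mathcal{ST}$ is free abelian of the same rank, so the two groups are abstractly isomorphic; concretely, $H$ is the image of the embedding $u\mapsto \op{diag}(u,u^{-1})$ of $k[C]^\times$ into $\op{SL}_2(k[C])$.

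The main obstacle is the initial reduction to $\op{H}^{H}_\bullet(\mathfrak{A}_\mathcal{L})$: one needs the bijection between cells of $\op{SL}_2(k[C])\times_{H}\mathfrak{A}_\mathcal{L}$ and cells of $\mathfrak{P}_C(\mathcal{L})$ in all cube dimensions, not merely at vertices. On vertices this is the injectivity argument in step (4) of the proof of \prettyref{prop:torus}; on higher cubes it follows from \prettyref{prop:local}, which guarantees that stars of vertices in $\mathfrak{A}_\mathcal{L}$ map isomorphically onto stars in the quotient $\op{SL}_2(k[C])\backslash\mathfrak{P}_C$. Once cellular bijectivity is established, all remaining steps are formal.
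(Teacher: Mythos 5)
Your reduction hinges on the claim that the natural map $\op{SL}_2(k[C])\times_{H}\mathfrak{A}_\mathcal{L}\to \mathfrak{P}_C(\mathcal{L})$ is an isomorphism of cell complexes, where $H$ is the setwise stabilizer of the apartment. This is false, and the failure is exactly where the real content of the proposition lies. For the induced complex $G\times_H Y\to X$ to be injective one needs $\Stab(\sigma;G)\subseteq H$ for every cell $\sigma$ of $Y$; but by \prettyref{prop:torus} (via \prettyref{prop:auto}(2)(b) and \prettyref{prop:action}(b)) the stabilizer of a cell of $\mathfrak{A}_\mathcal{L}$ in $\op{SL}_2(k[C])$ is of the form $k^n\rtimes k^\times$ with $n>0$ for infinitely many cells (whenever $\op{H}^0(\overline{C},\mathcal{L}\otimes(\mathcal{L}')^{-1})\neq 0$, e.g.\ by Riemann--Roch for large degree difference), whereas, as you yourself compute, the pointwise stabilizer of the apartment is only $k^\times$. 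A nontrivial unipotent element fixing one vertex does not preserve the apartment setwise (in a tree it moves a half-line), so it lies in $\Stab(\sigma;G)\setminus H$, and the cells $[1,\sigma]$ and $[g,\sigma]$ of $G\times_H Y$ are distinct but have the same image. The isomorphism of \emph{quotient} complexes $H\backslash\mathfrak{A}_\mathcal{L}\cong \op{SL}_2(k[C])\backslash\mathfrak{P}_C(\mathcal{L})$ (which is what the star argument and the vertex injectivity give you) identifies the index sets $\Sigma_p$, but the cells of the total spaces are indexed by $\bigsqcup_\sigma G/H_\sigma$ versus $\bigsqcup_\sigma G/G_\sigma$, and these differ. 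Consequently Shapiro's lemma does not apply and the collapse to $\op{H}_\bullet(H,\mathbb{Z})$ is unjustified.

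What is missing is precisely the paper's key input: one compares the two isotropy spectral sequences cell by cell and uses that the inclusion $k^\times\cong\Stab(\sigma;k[C]^\times)\hookrightarrow \Stab(\sigma;\op{SL}_2(k[C]))\cong k^n\rtimes k^\times$ induces an isomorphism on integral group homology (\cite[Theorem~4.6.2]{knudson:book}, resting on the fact that the torus acts on the unipotent radical with nontrivial weights over an infinite field). This is a substantive fact about unipotent stabilizers, not a formal consequence of the combinatorics of the quotient, and your argument never invokes it. Your identification of the setwise stabilizer $H$ with $k[C]^\times$ via the split extension $1\to k^\times\to H\to\mathcal{ST}\to 1$ is essentially fine (and matches the paper's $\tilde{\tau}$), but it only becomes relevant after the spectral-sequence comparison has removed the unipotent parts.
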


\begin{proof}
We give the argument for $\op{SL}_2$, the argument for $\op{GL}_2$ is
similar. Recall from \prettyref{prop:torus} that we have an
equivariant map $\mathcal{Z}^s\to\mathfrak{P}_C(\mathcal{L})$. From
the proof of \prettyref{prop:torus}, we see that the homomorphism
$\tau:\mathcal{T}\to\op{SL}_2(k[C])$ extends to a homomorphism
$\tilde{\tau}:k[C]^\times\to\op{SL}_2(k[C])$. Using the stabilizer
computations of \prettyref{prop:torus}, the map
$\mathcal{Z}^s\to\mathfrak{P}_C(\mathcal{L})$ is still equivariant
with respect to $\tilde{\tau}$, where the action of $k[C]^\times$ on
$\mathcal{Z}^s$ is the one factoring through $k[C]^\times\to
k[C]^\times/k^\times\cong\mathcal{T}$. 

The equivariant map above induces a morphism of isotropy spectral
sequences computing the induced morphism on homology
$$
\op{H}_\bullet^{k[C]^\times}(\mathcal{Z}^s,\mathbb{Z})\to
\op{H}_\bullet^{\op{SL}_2(k[C])}(\mathfrak{P}_C(\mathcal{L}),\mathbb{Z}).
$$
By \prettyref{prop:torus}, the map
$\mathcal{Z}^s\to\mathfrak{P}_C(\mathcal{L})$ induces an isomorphism
of quotient cell complexes. Any cell of $\mathcal{Z}^s$ is stabilized
by $k^\times$, and the cells of $\mathfrak{P}_C(\mathcal{L})$ are
stabilized by groups of the form $k^n\rtimes k^\times$ where the size
of the unipotent group depends on the cell. Note that the action of
$k^\times$ on $k^n$ in the above semi-direct product is induced from
an embedding into a Borel subgroup of $\op{GL}_2(K)$.
For any cell $\sigma$ in $\mathcal{Z}^s$, the composition 
$$
k^\times\cong
\op{Stab}(\sigma;k[C]^\times)\to
\op{Stab}(\sigma;\op{SL}_2(k[C]))\cong k^n\rtimes 
k^\times\stackrel{p}{\longrightarrow} k^\times
$$
is the identity. By \cite[Theorem 4.6.2]{knudson:book}, the inclusion
$$\op{Stab}(\sigma;k[C]^\times)\to \op{Stab}(\sigma;\op{SL}_2(k[C]))$$
induces an isomorphism in group homology. The morphism of equivariant
spectral sequences is then an isomorphism on the $E^1$-page, and hence
induces an isomorphism on equivariant homology.

The claim now follows easily, we simply compute
$\op{H}^{k[C]^\times}_\bullet(\mathcal{Z}^s,\mathbb{Z})$:
the model complex $\mathcal{Z}^s$ is contractible,  
and the action of $k[C]^\times$ factors as a free action
of $k[C]^\times/k^\times$ and a trivial action of $k^\times$. In
particular, we have an isomorphism 
$$
\op{H}^{k[C]^\times}_\bullet(\mathcal{Z}^s,\mathbb{Z})\cong
\op{H}^{k^\times}_\bullet((k[C]^\times/k^\times)\backslash\mathcal{Z}^s,\mathbb{Z}).
$$
Note that the homology of the quotient
$X=(k[C]^\times/k^\times)\backslash\mathcal{Z}^s$ is the homology of the
free abelian group $k[C]^\times/k^\times$, in particular it is
torsion-free. The $k^\times$-equivariant homology of $X$ is defined as
the homology of the Borel construction $k^\times\backslash
(Ek^\times\times X)$. Since $k^\times$ acts trivially on  $X$, this
quotient can be identified as $Bk^\times\times X$. In particular,
applying the K\"unneth formula, we see that the
$k^\times$-equivariant homology of $X$ is the homology of the group
$k[C]^\times$. 
\end{proof}

\subsection{Components of normalizer type, case $\op{SL}_2$} 

We next study the homology of the components corresponding to
$2$-torsion line bundles. These components are called ``of normalizer
type'' since the quotient is (away from $2$) a classifying space for
the intersection of $\op{SL}_2(k[C])$ with the normalizer of a maximal
torus in $\op{SL}_2(k(C))$. The argument in this case is more involved
since there are points which have stabilizer $\op{SL}_2(k)$, which is too
large to just come from the normalizer of a maximal torus. 

We recall from \prettyref{sec:global} what we know about the global
structure of those normalizer type components. We fix a line bundle 
$\mathcal{L}$ with $\mathcal{L}\cong\mathcal{L}^{-1}$ and denote by
$\mathfrak{P}_C(\mathcal{L})$ the connected component of the
parabolic subcomplex $\mathfrak{P}_C$ containing the bundle
$\mathcal{L}\oplus\mathcal{L}$. Combining \prettyref{lem:apartment}
and \prettyref{prop:normalizer}, there is a map
$\phi_{\mathcal{L}}:\mathcal{Z}^s\to \mathfrak{A}_{\mathcal{L}}\subset
\mathfrak{P}_C(\mathcal{L})$ and a 
homomorphism $\nu:\mathcal{SN}\to\op{SL}_2(k[C])$ such that
$\phi_{\mathcal{L}}$ is $\nu$-equivariant and induces an isomorphism
of quotient complexes $\mathcal{Z}^s/\mathcal{SN}\cong
\op{SL}_2(k[C])\backslash\mathfrak{P}_C(\mathcal{L})$. From (1) in the
proof of \prettyref{prop:normalizer}, we see that
$\nu:\mathcal{SN}\to\op{SL}_2(k[C])$ can be extended to a group
homomorphism $\tilde{\nu}:\tilde{\mathcal{SN}}\to\op{SL}_2(k[C])$,
where $\tilde{SN}$ is the group of determinant $1$ monomial $(2\times
2)$-matrices with entries in $k[C]$, i.e.,
$$
\tilde{\mathcal{SN}}=\left\{
\left(\begin{array}{cc}
0&u\\-u^{-1}&0
\end{array}\right)\mid 
u\in k[C]^\times
\right\}\cup \left\{
\left(\begin{array}{cc}
u&0\\0&u^{-1}
\end{array}\right)\mid 
u\in k[C]^\times
\right\}.
$$
The group $\tilde{\mathcal{SN}}$ sits in an extension 
$$
1\to k^\times \to \tilde{\mathcal{SN}}\to \mathcal{SN}\to 1,
$$
the homomorphism $\tilde{\nu}$ is the natural embedding followed by
conjugation as in (1) of \prettyref{prop:normalizer}, and $\nu$ is the
composition of $\tilde{\nu}$ with a choice of section
$\mathcal{SN}\to\tilde{\mathcal{SN}}$. Of course, the
subgroup $k^\times$ above acts trivially on $\mathcal{Z}^s$ and hence
does not contribute to the quotient; its relevance for the isotropy
spectral sequence comes from the contribution to the stabilizer
subgroups. 

\begin{lemma}
\label{lem:ssnorm}
Consider the above action of $\tilde{\mathcal{SN}}$ on
$\mathcal{Z}^s$. With $\mathbb{Z}[1/2]$-coefficients, the
isotropy spectral sequence for the $\tilde{\mathcal{SN}}$-complex
$\mathcal{Z}^s$ degenerates at the $E^2$-page and produces an
isomorphism 
$$
\op{H}_\bullet^{\tilde{\mathcal{SN}}}(\mathcal{Z}^s,\mathbb{Z}[1/2])\cong 
\op{H}_\bullet(\tilde{\mathcal{SN}},\mathbb{Z}[1/2]).
$$
\end{lemma}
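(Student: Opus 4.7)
The plan is to first establish the isomorphism and then the degeneration. The isomorphism is formal: $\mathcal{Z}^s$ is the cubical decomposition of $\mathbb{R}^s$ and is contractible, so the Borel construction $E\tilde{\mathcal{SN}} \times_{\tilde{\mathcal{SN}}} \mathcal{Z}^s$ is weakly equivalent to $B\tilde{\mathcal{SN}}$, yielding $\op{H}^{\tilde{\mathcal{SN}}}_\bullet(\mathcal{Z}^s, \mathbb{Z}[1/2]) \cong \op{H}_\bullet(\tilde{\mathcal{SN}}, \mathbb{Z}[1/2])$ and identifying the abutment of the isotropy spectral sequence.

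For the degeneration at $E^2$, I would first identify the cell stabilizers precisely. Since the action of $\tilde{\mathcal{SN}}$ on $\mathcal{Z}^s$ factors through $\mathcal{SN} = \tilde{\mathcal{SN}}/k^\times$, every stabilizer contains the subgroup $k^\times$. By \prettyref{prop:normalizer}, the $\tilde{\mathcal{SN}}$-stabilizers of generic cells are exactly $k^\times$, while the stabilizers of the $2^{\op{rk}\mathcal{T}}$ exceptional vertices (the $\mathcal{T}$-translates of the origin modulo $\mathcal{ST}$) are the full normalizer of the maximal torus in $\op{SL}_2(k)$, call it $N$. With $\mathbb{Z}[1/2]$-coefficients, the extension $1 \to k^\times \to N \to \mathbb{Z}/2 \to 1$ yields $\op{H}_\bullet(N, \mathbb{Z}[1/2]) \cong \op{H}_\bullet(k^\times, \mathbb{Z}[1/2])^{\mathbb{Z}/2}$, where $\mathbb{Z}/2$ acts by inversion and so survives only in even degrees.

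Next I would compute $\op{H}_\bullet(\tilde{\mathcal{SN}}, \mathbb{Z}[1/2])$ via the Lyndon--Hochschild--Serre spectral sequence for the extension $1 \to k[C]^\times \to \tilde{\mathcal{SN}} \to \mathbb{Z}/2 \to 1$ (the diagonal torus as kernel), which collapses with $\mathbb{Z}[1/2]$-coefficients and identifies the group homology with $\op{H}_\bullet(k[C]^\times, \mathbb{Z}[1/2])^{\mathbb{Z}/2}$ under the inversion action. In parallel, the $E^2$-page of the isotropy spectral sequence is the cellular homology of $\mathcal{SN}\backslash\mathcal{Z}^s$ with coefficients in the stabilizer homology system; using that $\mathcal{ST}\backslash\mathcal{Z}^s$ is a torus with homology $\bigwedge^\bullet(\mathcal{ST}\otimes\mathbb{Z}[1/2])$, tensoring with $\op{H}_\bullet(k^\times, \mathbb{Z}[1/2])$, and taking the residual $\mathcal{SN}/\mathcal{ST}\cong\mathbb{Z}/2$-action into account, one obtains an $E^2$-page that manifestly matches the LHS answer term by term. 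Constructing an explicit isomorphism between the two computations simultaneously establishes the degeneration and the stated identification.

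The main obstacle I expect is the careful bookkeeping at the exceptional vertices. The inclusions $k^\times \hookrightarrow N$ identify the local contribution with the $\mathbb{Z}/2$-invariants of $\op{H}_\bullet(k^\times, \mathbb{Z}[1/2])$, while the reflections simultaneously flip the orientation of the surrounding cube complex. These two sign effects must combine consistently with the $d^1$-differential and with the inversion action of $\mathbb{Z}/2$ on $k[C]^\times$-homology so that the comparison map between the two $E^2$-pages is an isomorphism. Without inverting $2$, the interplay of these $\mathbb{Z}/2$-actions could produce nontrivial higher differentials, which explains the coefficient hypothesis.
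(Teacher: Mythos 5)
Your first paragraph (contractibility of $\mathcal{Z}^s$ identifies the abutment with $\op{H}_\bullet(\tilde{\mathcal{SN}},\mathbb{Z}[1/2])$) and your identification of the cell stabilizers are correct and agree with the paper. The gap is in the degeneration step. You propose to compute the abutment by Lyndon--Hochschild--Serre and the $E^2$-page by hand, and then conclude degeneration because the two answers ``match term by term.'' An abstract isomorphism $\bigoplus_{p+q=n}E^2_{p,q}\cong \op{H}_n(\tilde{\mathcal{SN}},\mathbb{Z}[1/2])$ does not force the higher differentials to vanish: the groups involved ($\op{H}_q(k^\times,\mathbb{Z}[1/2])$ and its relatives) are not finitely generated, so no counting argument is available, and a nonzero $d^r$ could perfectly well leave an associated graded that is still abstractly isomorphic to the $E^2$-page. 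Your phrase ``constructing an explicit isomorphism between the two computations'' is exactly the missing content: there is no natural map between the LHS spectral sequence of $1\to k[C]^\times\to\tilde{\mathcal{SN}}\to\mathbb{Z}/2\to 1$ and the isotropy spectral sequence along which such an isomorphism could be propagated, and you do not supply a substitute. The paper's proof avoids this by producing genuine maps of spectral sequences: restriction along $\tilde{\mathcal{ST}}\hookrightarrow\tilde{\mathcal{SN}}$ (whose isotropy spectral sequence degenerates by the torus case, \prettyref{prop:hlgytorus}) shows the rows $q\geq 1$ carry no higher differentials, while the projection to the $2$-torsion quotient acting on $\mathcal{Z}^s/\tilde{\mathcal{ST}}$ (whose spectral sequence is concentrated in $q=0$ with $\mathbb{Z}[1/2]$-coefficients) handles the row $q=0$. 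If you want to stay close to your outline, the clean fix is a transfer argument: since $[\tilde{\mathcal{SN}}:\tilde{\mathcal{ST}}]=2$ is invertible in $\mathbb{Z}[1/2]$, the isotropy spectral sequence for $\tilde{\mathcal{SN}}$ is a natural retract of the one for $\tilde{\mathcal{ST}}$ via restriction and corestriction, and degeneration is inherited from the torus case; but this has to be said, it is not a consequence of matching the answers.

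A secondary problem: your parenthetical claim that $\op{H}_\bullet(N,\mathbb{Z}[1/2])\cong\op{H}_\bullet(k^\times,\mathbb{Z}[1/2])^{\mathbb{Z}/2}$ ``survives only in even degrees'' is false. Inversion does not act by $(-1)^q$ on all of $\op{H}_q(k^\times,\mathbb{Z}[1/2])$; the roots of unity contribute invariant classes in odd degrees (indeed $\op{H}_3(\op{N}(F),\mathbb{Z}[1/2])$ is a nonzero subgroup of $\op{H}_3(\op{SL}_2(F),\mathbb{Z}[1/2])$ in the Bloch--Wigner sequence quoted at the end of the paper). This is not fatal to the strategy, but it shows that the ``manifest'' term-by-term match at the special vertices is precisely the delicate part, and it would have to be carried out correctly before any comparison could even be attempted.
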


\begin{proof}
(1) We first describe the stabilizer subgroups to infer some information
on the page
$E_{p,q}^1=\bigoplus_{\sigma\in\Sigma_p}\op{H}_q(\tilde{\mathcal{SN}}_\sigma,
\mathbb{Z}[1/2])$.
As noted above, the subgroup $k^\times$ (embedded as constant diagonal
matrices) acts trivially, hence every stabilizer contains at least a
copy of $k^\times$. The group $\mathcal{SN}$ sits in an extension 
$$
0\to\mathcal{ST}\to\mathcal{SN}\to k[C]^\times/(k[C]^\times)^2\to 1. 
$$
The subgroup $\mathcal{ST}$ acts via translations on $\mathcal{Z}^s$,
hence acts without fixed points. The square classes in the quotient
act as point inversions with suitable centers, as described in
\prettyref{def:models}. Therefore, the vertices $(a_1,\dots,a_s)$ with
$\sum a_i[P_i]=0$ have stabilizers $k^\times\rtimes\mathbb{Z}/2$ where
the $\mathbb{Z}/2$-factor acts via inversion on $k^\times$. All other
vertices have $k^\times$ as stabilizer. Because the action of the
$\mathbb{Z}/2$-factors on $\mathcal{Z}^s$ is via point inversions, the
higher simplices also all have $k^\times$ as stabilizer.

(2) Note that the identity on $\mathcal{Z}^s$ is equivariant for the
inclusion $\tilde{\mathcal{ST}}\hookrightarrow \tilde{\mathcal{SN}}$,
where $\tilde{\mathcal{ST}}$ is the group of determinant diagonal
$(2\times 2)$-matrices with entries in $k[C]$. The inclusion therefore
induces a morphism of isotropy spectral sequences converging to the
morphism 
$$
\op{H}_\bullet^{\tilde{\mathcal{ST}}}(\mathcal{Z}^s,\mathbb{Z}[1/2])\to 
\op{H}_\bullet^{\tilde{\mathcal{SN}}}(\mathcal{Z}^s,\mathbb{Z}[1/2]).
$$
The first group is the one discussed in the torus case above, and the
isotropy spectral sequence for $\tilde{\mathcal{ST}}$-equivariant
homology of $\mathcal{Z}^s$ degenerates at the $E^2$-page. In
particular, the restrictions of the differentials to the homology of
the stabilizers in $\tilde{\mathcal{ST}}$ is trivial.

(3) Now we note that the quotient map
$\mathcal{Z}^s\to\mathcal{Z}^s/\tilde{\mathcal{ST}}$ is equivariant
for the quotient homomorphism
$$
\tilde{\mathcal{SN}}\to\tilde{\mathcal{SN}}/\tilde{\mathcal{ST}}\cong 
k[C]^\times/(k[C]^\times)^2.
$$
We consider the $k[C]^\times/(k[C]^\times)^2$-equivariant homology of
the quotient $\mathcal{Z}^s/\tilde{\mathcal{ST}}$. Obviously, the
stabilizers in this case are only $2$-torsion. Therefore, the only
non-trivial terms of the spectral sequence for
$k[C]^\times/(k[C]^\times)^2$-equivariant homology of
$\mathcal{Z}^s/\tilde{\mathcal{ST}}$ \emph{with 
  $\mathbb{Z}[1/2]$-coefficients} are concentrated in the
$q=0$-column. Hence the spectral sequence degenerates at the
$E^2$-term, and all further differentials are trivial.  

(4) Combining points (2) and (3) above, we find that all differentials
$d_r$, $r\geq 2$, of the isotropy spectral sequence computing
$\tilde{\mathcal{SN}}$-equivariant homology of $\mathcal{Z}^s$ have to
be trivial: the quotient map from (3) induces an isomorphism on the
$q=0$ part, and everything else is detected on the subgroup
$\tilde{\mathcal{ST}}$ from (2). The claim follows.
\end{proof}

\begin{proposition}
\label{prop:hlgynorm}
Let $k$ be an algebraically closed field, let $\overline{C}$ be a
smooth projective curve and denote
$C=\overline{C}\setminus\{P_1,\dots,P_s\}$. 
Let $\mathcal{L}$ be a line bundle on $\overline{C}$ 
which is $2$-torsion in the
Picard group of $C$, and denote by $\mathfrak{P}_C(\mathcal{L})$ the
corresponding connected component of the parabolic subcomplex,
cf. \prettyref{prop:normalizer}. 

Denote $\tilde{\nu}:\tilde{\mathcal{SN}}\to\op{SL}_2(k[C])$ the group
homomorphism discussed above. The $\tilde{\nu}$-equivariant map 
$\phi_{\mathcal{L}}:\mathcal{Z}^s\to \mathfrak{P}_C(\mathcal{L})$
induces a homomorphism
$$
\op{H}_\bullet(\tilde{\mathcal{SN}},\mathbb{Z}[1/2])\cong 
\op{H}_\bullet^{\tilde{\mathcal{SN}}}(\mathcal{Z}^s,\mathbb{Z}[1/2])
\to
\op{H}_\bullet^{\op{SL}_2(k[C])}(\mathfrak{P}_C(\mathcal{L}),\mathbb{Z}[1/2]) 
$$
such that there is a long exact sequence of $\mathbb{Z}[1/2]$-modules
$$
\cdots\to
\mathcal{RP}^1_{i+1}(k)\otimes_{\mathbb{Z}}\mathbb{Z}[1/2,k[C]^\times/(k[C]^\times)^2]\to 
\op{H}_i(\tilde{\mathcal{SN}},\mathbb{Z}[1/2])\to 
$$
$$
\to
\op{H}_i^{\op{SL}_2(k[C])}(\mathfrak{P}_C(\mathcal{L}),\mathbb{Z}[1/2])
\to 
\mathcal{RP}^1_i(k)\otimes_{\mathbb{Z}}\mathbb{Z}[1/2,k[C]^\times/(k[C]^\times)^2]\to \cdots
$$
where the groups $\mathcal{RP}^1_\bullet(k)$ are the refined scissors
configurations groups recalled in \prettyref{sec:rp1}. 
\end{proposition}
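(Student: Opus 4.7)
The plan is to realise the desired long exact sequence as the mapping-cone triangle of the $\tilde\nu$-equivariant cellular map $\phi_{\mathcal{L}}:\mathcal{Z}^s\to\mathfrak{P}_C(\mathcal{L})$ from \prettyref{lem:apartment}, considered at the level of equivariant chain complexes. By \prettyref{lem:ssnorm} the source of the induced map on equivariant homology is already identified with $\op{H}_\bullet(\tilde{\mathcal{SN}},\mathbb{Z}[1/2])$, so the content of the proposition is to identify the cofiber of
$\phi_{\mathcal{L},\ast}:\op{H}^{\tilde{\mathcal{SN}}}_\bullet(\mathcal{Z}^s,\mathbb{Z}[1/2])\to\op{H}^{\op{SL}_2(k[C])}_\bullet(\mathfrak{P}_C(\mathcal{L}),\mathbb{Z}[1/2])$
with $\mathcal{RP}^1_\bullet(k)\otimes_\mathbb{Z}\mathbb{Z}[1/2,k[C]^\times/(k[C]^\times)^2]$.

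To compute this cofiber, I pick chain-level models $C_\bullet$ and $D_\bullet$ for the source and target and filter both by cell dimension of their common quotient, which is well-defined since $\phi_{\mathcal{L}}$ induces an isomorphism of quotient cell complexes by \prettyref{prop:normalizer}. The induced map $\phi_\ast\colon C_\bullet\to D_\bullet$ is then a map of filtered complexes, and I analyse its cofiber on the associated graded. The $p$th graded piece decomposes as a direct sum over $p$-dimensional orbits $\sigma$ of resolutions computing $\op{H}_\bullet(\Stab(\sigma),\mathbb{Z}[1/2])$, where the orientation twists are trivial because $\mathbb{Z}/2$-quotients only arise in vertex stabilisers. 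By \prettyref{prop:normalizer} together with the stabiliser computation in the proof of \prettyref{lem:ssnorm}, the induced map on stabilisers is of one of two types. For any cell $\sigma$ that is not one of the special vertices, it is the split inclusion $k^\times\hookrightarrow k^n\rtimes k^\times$ of the diagonal torus into a Borel subgroup; since $k^n$ is uniquely $\ell$-divisible for odd $\ell$, \cite[Theorem 4.6.2]{knudson:book} (applied as in the proof of \prettyref{prop:hlgytorus}) yields an isomorphism on $\op{H}_\bullet(-,\mathbb{Z}[1/2])$, so the cofiber of the graded piece at $\sigma$ is acyclic. For each of the $|\mathcal{T}/\mathcal{ST}|$ special vertices, the map is the inclusion of the normaliser $k^\times\rtimes\mathbb{Z}/2$ of a maximal torus into $\op{SL}_2(k)$, and the refined scissors congruence long exact sequence recalled in \prettyref{sec:rp1} identifies the $i$th homology of the corresponding cofiber with $\mathcal{RP}^1_i(k)$.

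Putting the cell-by-cell analysis together, the spectral sequence for the cofiber $\op{cone}(\phi_\ast)$ has $E^1$ page concentrated in filtration degree $p=0$, equal to
$\mathcal{RP}^1_\bullet(k)\otimes_\mathbb{Z}\mathbb{Z}[1/2,k[C]^\times/(k[C]^\times)^2]$,
where the identification of special vertex orbits with $\mathcal{T}/\mathcal{ST}\cong k[C]^\times/(k[C]^\times)^2$ follows from \prettyref{lem:picc} together with divisibility of $k^\times$, using that $k$ is algebraically closed. Being concentrated in a single column, the spectral sequence collapses, yielding the required identification of the cofiber, and the mapping-cone long exact sequence becomes the one stated. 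The main technical obstacle is handling the filtered mapping cone rigorously enough to ensure that its associated graded decomposes cell-by-cell as claimed; equivalently, one must verify that the potential higher differentials in the $\op{SL}_2(k[C])$-isotropy spectral sequence, which could a priori be nontrivial because $\op{SL}_2(k)$ has rich $\mathbb{Z}[1/2]$-homology at the special vertices, are matched by differentials on the $\tilde{\mathcal{SN}}$-side so that they cancel in the cofiber, leaving its $E^1$ page concentrated in the single column where the collapse argument applies.
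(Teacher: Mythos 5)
Your argument is correct, and it reaches the stated long exact sequence by a genuinely different assembly mechanism than the paper, while using the same key inputs (\prettyref{prop:normalizer} for the quotient and stabilizers, the center-kills theorem \cite[Theorem 4.6.2]{knudson:book} for the cells with stabilizer $k^n\rtimes k^\times$, and \prettyref{prop:rp1exact} at the special vertices). The paper compares the two isotropy spectral sequences term by term: it shows the comparison is an isomorphism on $E^1_{p,q}$ for $p\geq 1$ and on the $q=0$ column of $E^2$, splices one copy of the sequence of \prettyref{prop:rp1exact} at each special vertex into the $E^1_{0,\bullet}$-columns, and then (steps (4)--(7) of the printed proof) chases kernels and cokernels of the maps on $E^2_{0,i}$ and $E^2_{1,i}$, proves degeneration at $E^2$, and promotes the result to the abutment. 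Your filtered mapping cone short-circuits that bookkeeping: the associated graded of the cone is the cone of the associated graded, the graded map is a quasi-isomorphism on every orbit summand except the special vertices, and at a special vertex the cone of $\op{H}_\bullet(\op{N}(k))\to\op{H}_\bullet(\op{SL}_2(k))$ computes $\mathcal{RP}^1_\bullet(k)$ because the defining sequence of \prettyref{def:rp1} realizes $\mathcal{RP}^1_\bullet$ as exactly such a cone via \prettyref{lem:identnorm} and \prettyref{cor:acyclic}. Hence the cone's $E^1$-page sits in the single column $p=0$ and collapses for degree reasons; the ``main technical obstacle'' you flag at the end is therefore not an obstacle at all --- no matching of higher differentials is needed, and this is precisely what the cone formalism buys over the paper's explicit $E^2$-level comparison. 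One small correction: the reason $k^\times\hookrightarrow k^n\rtimes k^\times$ induces an isomorphism on $\op{H}_\bullet(-,\mathbb{Z}[1/2])$ is not unique $\ell$-divisibility of $k^n$ (in characteristic $0$ that would not kill $\op{H}_{>0}(k^n,\mathbb{Z}[1/2])$) but the nontrivial weight of the $k^\times$-action on $k^n$, i.e.\ the statement of \cite[Theorem 4.6.2]{knudson:book} that you correctly cite.
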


\begin{proof}
(1) As before, the $\tilde{\nu}$-equivariance of the map
$\phi_{\mathcal{L}}:\mathcal{Z}^s\to\mathfrak{P}_C(\mathcal{L})$
follows from  \prettyref{prop:normalizer} together with the remarks
before \prettyref{lem:ssnorm}. This implies the existence of an
induced morphism on homology 
$$
\op{H}_\bullet^{\tilde{\mathcal{SN}}}(\mathcal{Z}^s,\mathbb{Z}[1/2])
\to
\op{H}_\bullet^{\op{SL}_2(k[C])}(\mathfrak{P}_C(\mathcal{L}),\mathbb{Z}[1/2]).
$$
The identification of the source with group homology of
$\tilde{\mathcal{SN}}$ follows from \prettyref{lem:ssnorm}. 
The $\tilde{\nu}$-equivariant map $\mathcal{Z}^s\hookrightarrow
\mathfrak{P}_C(\mathcal{L})$ also induces a morphism of the
corresponding isotropy spectral sequences.
To prove the remaining claims, we investigate this morphism of
isotropy spectral sequences, where we use the following indexing for
the isotropy spectral sequence:
$$
E_{p,q}^1=\bigoplus_{\sigma\in\Sigma_p}\op{H}_q(G_\sigma,M_\sigma)\Rightarrow
\op{H}^G_{p+q}(X,M)
$$ 
with differentials going $d^r_{p,q}:E^r_{p,q}\to E^r_{p-r,q+r-1}$. 
We denote $E^i_{p,q}(\tilde{\mathcal{SN}})$ the $(p,q)$-entry of the
$i$-th page of the spectral sequence for
$\tilde{\mathcal{SN}}$-equivariant homology of $\mathcal{Z}^s$, and
similarly for $E^i_{p,q}(\op{SL}_2(k[C]))$. 

(2) From \prettyref{prop:normalizer}, the map
$\mathcal{Z}^s\to\mathfrak{P}_C(\mathcal{L})$ induces an isomorphism
of quotient cell complexes $\mathcal{Z}^s/\tilde{\mathcal{SN}}\cong
\op{SL}_2(k[C])\backslash\mathfrak{P}_C(\mathcal{L})$.
In particular, the induced morphism
$E^1_{p,0}(\tilde{\mathcal{SN}})\to E^1_{p,0}(\op{SL}_2(k[C]))$
is an isomorphism of complexes and hence induces an isomorphism on
the $(q=0)$-column of the $E^2$-pages.

(3)
Next, we look at the stabilizers. For the
$\tilde{\mathcal{SN}}$-action on $\mathcal{Z}^s$, we discussed in the
proof of \prettyref{lem:ssnorm} that there are
$2^{\operatorname{rk}\mathcal{ST}}$ vertices stabilized by a
semi-direct product $k^\times\rtimes \mathbb{Z}/2$, where the  
quotient is generated by the point inversion. 
For the action of $\op{SL}_2(k[C])$ on $\mathfrak{P}_C(\mathcal{L})$,
there are $2^{\operatorname{rk}\mathcal{T}}$ vertices stabilized by 
$\op{SL}_2(k)$. The map $\mathcal{Z}^s\to\mathfrak{P}_C(\mathcal{L})$
sends the former to the latter types of points, and the induced
morphism on stabilizers is the inclusion of
$k^\times\rtimes\mathbb{Z}/2$ as the normalizer of a maximal torus in
$\op{SL}_2(k)$.  

All the other cells in $\mathcal{Z}^s$ have stabilizer $k^\times$ in
$\tilde{\mathcal{SN}}$. By \prettyref{prop:normalizer}, all other
cells in $\mathfrak{P}_C(\mathcal{L})$ have stabilizer some 
semi-direct product $k^n\rtimes k^\times$ with the unipotent group
$k^n$ depending on the cell.  
As in the torus case, the composition 
$$
k^\times\cong\tilde{\mathcal{SN}}_\sigma\stackrel{\iota}{\longrightarrow}  
(\op{SL}_2(k[C]))_\sigma\cong k^n\rtimes
k^\times\stackrel{p}{\longrightarrow} k^\times
$$
is the identity for any such cell $\sigma$ in $\mathcal{Z}^s$. By
\cite[Theorem 4.6.2]{knudson:book}, the inclusion 
$\tilde{\mathcal{SN}}_\sigma\hookrightarrow (\op{SL}_2(k[C]))_\sigma$
induces an isomorphism in group homology. 
In particular,
the map $\mathcal{Z}^s\to\mathfrak{P}_C(\mathcal{L})$
induces an isomorphism on the $(p\geq 1)$-part of the $E^1$-terms: the
homology of the stabilizers changes only for the 
inclusion $k^\times\rtimes\mathbb{Z}/2\hookrightarrow \op{SL}_2(k)$ of
the normalizer of a maximal torus and this happens for special
$0$-cells only.  

(4) From (3) above, we see that the differentials $d^1_{p\geq
  2,\bullet}$ agree and only the differential
$d^1_{1,\bullet}:\bigoplus_{\sigma\in\Sigma_1}\op{H}_\bullet(G_\sigma)\rightarrow
\bigoplus_{\sigma\in\Sigma_0} \op{H}_\bullet(G_\sigma)$ differs for
the two spectral sequences associated to
$\tilde{\mathcal{SN}}\backslash\mathcal{Z}^s$ and
$\op{SL}_2(k[C])\backslash\mathfrak{P}_C(\mathcal{L})$.  Note that
even this differential is the composition of the differential
$d^1_{1,\bullet}(\tilde{\mathcal{SN}})$ with the map
$\op{H}_\bullet(\op{N}(k))\to\op{H}_\bullet(\op{SL}_2(k))$. In
particular,  the induced morphisms
$E^2_{p,q}(\tilde{\mathcal{SN}})\to E^2_{p,q}(\op{SL}_2(k[C]))$ are
isomorphisms for all $p\geq 2$.

Moreover, one copy of the long exact seqeuence of
\prettyref{prop:rp1exact} at every of the special vertices 
combine to the following long exact sequence
$$
\cdots\to
\mathcal{RP}^1_{i+1}(k)\otimes_{\mathbb{Z}}
\mathbb{Z}[1/2,k[C]^\times/(k[C]^\times)^2]\to 
E^1_{0,i}(\tilde{\mathcal{SN}})\to 
\phantom{some more space}
$$
$$
\phantom{some space}\to E^1_{0,i}(\op{SL}_2(k[C]))
\to 
\mathcal{RP}^1_i(k)\otimes_{\mathbb{Z}}\mathbb{Z}[1/2,k[C]^\times/(k[C]^\times)^2]\to
\cdots 
$$

(5) In the next step, we want to establish an exact sequence
describing the morphism of $E^2$-terms. First note that the map 
$$
E^1_{0,i}(\op{SL}_2(k[C]))\to\mathcal{RP}^1_i(k)\otimes_{\mathbb{Z}}
\mathbb{Z}[1/2,k[C]^\times/(k[C]^\times)^2] 
$$
induces a natural map 
$$
  E^2_{0,i}(\op{SL}_2(k[C]))
\to\mathcal{RP}^1_i(k)\otimes_{\mathbb{Z}} 
\mathbb{Z}[1/2,k[C]^\times/(k[C]^\times)^2] 
$$
because the differential $d^1:E^1_{1,i}\to E^1_{0,i}(\op{SL}_2(k[C]))$
factors through $E^1_{0,i}(\tilde{\mathcal{SN}})$. Moreover, because
the group $E^2_{0,i}$ is a quotient of $E^1_{0,i}$, these maps have
the same image in
$\mathcal{RP}^1_i(k)\otimes_{\mathbb{Z}}
\mathbb{Z}[1/2,k[C]^\times/(k[C]^\times)^2]$.
The elements in $E^1_{0,i}(\tilde{\mathcal{SN}})$ mapping to $0$
in $E^2_{0,i}(\tilde{\mathcal{SN}})$ come from
$E^1_{1,i}(\tilde{\mathcal{SN}})\cong E^1_{1,i}(\op{SL}_2(k[C]))$. The
elements in $\ker(E^1_{0,i}(\tilde{\mathcal{SN}})\to
E^2_{0,i}(\tilde{\mathcal{SN}}))$
can then be identified exactly with the elements of 
$\op{coker}\left(E^2_{1,i}(\tilde{\mathcal{SN}})\to
  E^2_{1,i}(\op{SL}_2(k[C]))\right)$, whence we get an exact sequence 
$$
0\to \op{coker}\left(E^2_{1,i}(\tilde{\mathcal{SN}})\to
  E^2_{1,i}(\op{SL}_2(k[C]))\right) \to 
E^1_{0,i}(\tilde{\mathcal{SN}})\to E^2_{0,i}(\tilde{\mathcal{SN}})\to
0. 
$$
There is a similar exact sequence 
\begin{eqnarray*}
0&\to& \op{coker}\left(E^2_{1,i}(\tilde{\mathcal{SN}})\to
  E^2_{1,i}(\op{SL}_2(k[C]))\right) \\&\to& 
\ker\left(E^1_{0,i}(\tilde{\mathcal{SN}})\to
  E^1_{0,i}(\op{SL}_2(k[C]))\right) \\&\to&
\ker\left(E^2_{0,i}(\tilde{\mathcal{SN}})\to E^2_{0,i}(\op{SL}_2(k[C]))\right)
\to
0. 
\end{eqnarray*}
The exact sequence of (4) then gives rise to an exact sequence 
\begin{eqnarray*}
\cdots&\to&
\mathcal{RP}^1_{i+1}(k)\otimes_{\mathbb{Z}}
\mathbb{Z}[1/2,k[C]^\times/(k[C]^\times)^2]\\&\to& 
\ker\left(E^1_{0,i}(\tilde{\mathcal{SN}})\to
  E^1_{0,i}(\op{SL}_2(k[C]))\right) \\&\to&
\op{coker}\left(E^2_{0,i}(\tilde{\mathcal{SN}})\to
  E^2_{0,i}(\op{SL}_2(k[C]))\right) 
\\&\to& 
\mathcal{RP}^1_i(k)\otimes_{\mathbb{Z}}\mathbb{Z}[1/2,k[C]^\times/(k[C]^\times)^2]\to
\cdots 
\end{eqnarray*}
and the second term is also completely decomposed into stuff coming
from the comparison of $E^2$-terms, using the exact sequence just above.

Finally, we investigate the map $E^2_{1,i}(\tilde{\mathcal{SN}})\to
E^2_{1,i}(\op{SL}_2(k[C]))$. Let $\sigma\in
E^2_{1,i}(\tilde{\mathcal{SN}})$ be such that it becomes trivial in
$E^2_{1,i}(\op{SL}_2(k[C]))$. There is a representative
$\tilde{\sigma}\in E^1_{1,i}(\tilde{\mathcal{SN}})$ with
$d^1\tilde{\sigma}=0$. Since the maps 
$E^1_{p,i}(\tilde{\mathcal{SN}})\to E^1_{p,i}(\op{SL}_2(k[C]))$ are 
isomorphisms for any $p\geq 1$ and any $i$, if the image of $\sigma$
in $E^2_{1,i}(\op{SL}_2(k[C]))$ is trivial, then already
$\sigma=0$. We see that the map $E^2_{1,i}(\tilde{\mathcal{SN}})\to
E^2_{1,i}(\op{SL}_2(k[C]))$ is injective. 

Recall again that all the comparison morphisms
$E^1_{p,i}(\tilde{\mathcal{SN}})\to E^1_{p,i}(\op{SL}_2(k[C]))$ are
isomorphisms for all $i$ and $p\geq 1$. In particular, the comparison
maps on the $E^2$ terms discussed above (dealing with kernels and
cokernels of the maps induced on $E^2_{0,i}$ and $E^2_{1,i}$) are the
only ones which have a possibly non-trivial kernel or cokernel. In
particular, the long exact sequence above gives a complete comparison
statement for the $E^2$-terms of the two spectral sequences. 

(6) By \prettyref{lem:ssnorm}, the spectral sequence for 
$SL_2(k[C])$-equivariant homology of $\mathfrak{P}(\mathcal{L})$ also
has to degenerate at the $E^2$-term, because all the differentials
(except the $d^1_{1,i}:E^1_{1,i}\to E^1_{0,i}$ discussed above) come
from the spectral sequence for $\tilde{\mathcal{SN}}$. In particular,
$E^\infty_{p,q}(\op{SL}_2(k[C]))=E^2_{p,q}(\op{SL}_2(k[C]))$ and
$E^\infty_{p,q}(\op{SL}_2(k[C]))\cong E^\infty_{p,q}(\tilde{\mathcal{SN}})$
for $p\geq 2$.

(7) We now want to promote the exact sequence on $E^2$-terms to an
exact sequence of the homology groups proper. Obviously, the
equivariant map of the model complex into the parabolic component
induces a morphism of homology groups
$\op{H}_\bullet(\tilde{\mathcal{SN}},\mathbb{Z}[1/2])\to
\op{H}_\bullet^{\op{SL}_2(k[C])}(\mathfrak{P}_C(\mathcal{L}),\mathbb{Z}[1/2])$
which is compatible with the filtrations and the spectral
sequences. The degeneration of the spectral sequence in (6) together
with the computations of kernels and cokernels of comparison maps in
step (5) imply that 
$$
\op{ker}\left(\op{H}_i(\tilde{\mathcal{SN}},\mathbb{Z}[1/2])\to
\op{H}_i^{\op{SL}_2(k[C])}(\mathfrak{P}_C(\mathcal{L}),\mathbb{Z}[1/2])\right)
$$
$$\cong \ker\left(E^2_{0,i}(\tilde{\mathcal{SN}})\to
  E^2_{0,i}(\op{SL}_2(k[C]))\right). 
$$
Using this identification, there is a natural map 
$$
\mathcal{RP}^1_{i+1}(k)[k[C]^\times/(k[C]^\times)^2]\to 
\op{ker}\left(\op{H}_i(\tilde{\mathcal{SN}},\mathbb{Z}[1/2])\to
\op{H}_i^{\op{SL}_2(k[C])}(\mathfrak{P}_C(\mathcal{L}),\mathbb{Z}[1/2])\right)
$$
coming from the composition
$$
\mathcal{RP}^1_{i+1}(k)[k[C]^\times/(k[C]^\times)^2]\to
E^1_{0,i}(\tilde{\mathcal{SN}})\to E^2_{0,i}(\tilde{\mathcal{SN}}),
$$
cf. step (5) above. 

From the analysis of the complex $\mathfrak{P}_C(\mathcal{L})$ we see
that its equivariant homology is detected by the maximal torus and the
subgroups $\op{SL}_2(k)$ sitting at the special vertices. In
particular, there is a natural map 
$$
\op{H}_\bullet(\op{SL}_2(k),\mathbb{Z}[1/2])[k[C]^\times/(k[C]^\times)^2]\to
\mathcal{RP}^1_\bullet(k)[k[C]^\times/(k[C]^\times)^2]
$$
and the analysis of the spectral sequence shows that this map extends
to the required natural map 
$$
\op{H}_i^{\op{SL}_2(k[C])}(\mathfrak{P}_C(\mathcal{L}),\mathbb{Z}[1/2])\to 
\mathcal{RP}^1_{i}(k)[k[C]^\times/(k[C]^\times)^2]
$$
because all the additional relations come from homology of $\op{N}(k)$
and then are satisfied in the target as well. 

Using step (5) again, we can describe the cokernel of the comparison
map via an extension
$$0\to \op{coker}\left(
E^2_{0,i}(\tilde{\mathcal{SN}})\to E^2_{0,i}(\op{SL}_2(k[C]))
\right)\to
$$
$$
\to\op{coker}\left(\op{H}_i(\tilde{\mathcal{SN}},\mathbb{Z}[1/2])\to
\op{H}_i^{\op{SL}_2(k[C])}(\mathfrak{P}_C(\mathcal{L}),\mathbb{Z}[1/2])\right)
\to
$$
$$
\to\op{coker}\left(
E^2_{1,i-1}(\tilde{\mathcal{SN}})\to E^2_{1,i-1}(\op{SL}_2(k[C]))\right)\to 0.
$$
Also from step (5), we know that there is a natural map
$\mathcal{RP}^1_{i}(k)[k[C]^\times/(k[C]^\times)^2]\to
E^2_{0,i}(\tilde{\mathcal{SN}})$
whose kernel also sits in an exact sequence with the same outer terms
as above. The natural map induces isomorphisms on the graded pieces,
by the computations in step (5). This implies the exactness claim. 
\end{proof}

\subsection{Components of normalizer type, case $\op{PGL}_2$} 

Now we state the corresponding results on the homology of the
normalizer type components for the case of the group
$\op{(P)GL}_2$. 

\begin{proposition}
\label{prop:hlgynormgl2}
Let $k$ be an algebraically closed field, let $\overline{C}$ be a
smooth projective curve and denote
$C=\overline{C}\setminus\{P_1,\dots,P_s\}$. 
Let $\mathcal{L}$ be a line bundle on $\overline{C}$ which is
$2$-torsion in the Picard group of $C$, and denote by
$\mathfrak{P}_C(\mathcal{L})$ the 
corresponding connected component of the parabolic subcomplex,
cf. \prettyref{prop:normalizer}. 

Denote $\tilde{\nu}:\tilde{\mathcal{N}}\to\op{PGL}_2(k[C])$ the
analogue of the group homomorphism discussed above. The
$\tilde{\nu}$-equivariant map  
$\phi_{\mathcal{L}}:\mathcal{Z}^s\to \mathfrak{P}_C(\mathcal{L})$
induces a homomorphism
$$
\op{H}_\bullet(\tilde{\mathcal{N}},\mathbb{Z}[1/2])\cong 
\op{H}_\bullet^{\tilde{\mathcal{N}}}(\mathcal{Z}^s,\mathbb{Z}[1/2])
\to
\op{H}_\bullet^{\op{PGL}_2(k[C])}(\mathfrak{P}_C(\mathcal{L}),\mathbb{Z}[1/2]) 
$$
such that there is a long exact sequence of $\mathbb{Z}[1/2]$-modules
$$
\cdots\to
\mathcal{P}^1_{i+1}(k)\to
\op{H}_i(\tilde{\mathcal{N}},\mathbb{Z}[1/2])\to 
\op{H}_i^{\op{PGL}_2(k[C])}(\mathfrak{P}_C(\mathcal{L}),\mathbb{Z}[1/2])
\to 
\mathcal{P}^1_i(k)\to \cdots
$$
\end{proposition}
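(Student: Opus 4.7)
The proof of \prettyref{prop:hlgynormgl2} proceeds by a close parallel to the SL$_2$ argument of \prettyref{prop:hlgynorm}, with simplifications arising from the passage to the projective quotient. The plan is as follows.

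First, I would construct the monomial group $\tilde{\mathcal{N}}$ as the image in $\op{PGL}_2(k[C])$ of the group of invertible monomial $(2\times 2)$-matrices with entries in $k[C]$, and extend the homomorphism $\nu:\mathcal{N}\to\op{PGL}_2(k[C])$ from \prettyref{prop:normalizer}(1) to a homomorphism $\tilde{\nu}:\tilde{\mathcal{N}}\to\op{PGL}_2(k[C])$ making $\phi_{\mathcal{L}}$ equivariant. The subgroup analogous to $\tilde{\mathcal{ST}}\subset \tilde{\mathcal{SN}}$ here is the image $\tilde{\mathcal{T}}$ of diagonal matrices, fitting in an extension $1\to k[C]^\times/k^\times \to \tilde{\mathcal{T}}\to 1$ (so $\tilde{\mathcal{T}}\cong k[C]^\times/k^\times$), and the $\tilde{\mathcal{N}}/\tilde{\mathcal{T}}$-quotient is simply $\mathbb{Z}/2$, without any additional factor of square classes. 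This is the source of the simpler shape of the exact sequence.

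Second, I would prove the analogue of \prettyref{lem:ssnorm}: with $\mathbb{Z}[1/2]$-coefficients, the isotropy spectral sequence for $\tilde{\mathcal{N}}$-equivariant homology of $\mathcal{Z}^s$ degenerates at the $E^2$-page and yields $\op{H}_\bullet^{\tilde{\mathcal{N}}}(\mathcal{Z}^s,\mathbb{Z}[1/2])\cong \op{H}_\bullet(\tilde{\mathcal{N}},\mathbb{Z}[1/2])$. The three-step argument of \prettyref{lem:ssnorm} carries over verbatim: detect the differentials with the help of the $\tilde{\mathcal{T}}$-subgroup (whose spectral sequence degenerates by the PGL$_2$-case of \prettyref{prop:hlgytorus}), and use that the quotient $\mathcal{Z}^s/\tilde{\mathcal{T}}$ has only $2$-torsion stabilizers, so its equivariant spectral sequence collapses to the $q=0$ row after inverting $2$.

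Third, I would compare the two isotropy spectral sequences via the $\tilde{\nu}$-equivariant map $\phi_{\mathcal{L}}$. By \prettyref{prop:normalizer}(1) the map of quotients is an isomorphism of cell complexes, and by \prettyref{prop:local}, \prettyref{prop:normalizer} and \cite[Theorem 4.6.2]{knudson:book} the comparison of stabilizers is an isomorphism on $\op{H}_\bullet(-,\mathbb{Z}[1/2])$ at every cell of positive dimension and at every $0$-cell other than the single distinguished vertex (the image of $(0,\dots,0)$), whose stabilizer jumps from $\op{N}(k)/k^\times$ to $\op{PGL}_2(k)$. At that single vertex, the exact sequence from \prettyref{sec:rp1} relating $\op{H}_\bullet(\op{N}(k)/k^\times,\mathbb{Z}[1/2])$ and $\op{H}_\bullet(\op{PGL}_2(k),\mathbb{Z}[1/2])$ via $\mathcal{P}^1_\bullet(k)$ inserts exactly one copy of $\mathcal{P}^1_\bullet(k)$ into the comparison on $E^1_{0,\bullet}$. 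Note the absence of a tensor factor $\mathbb{Z}[1/2,k[C]^\times/(k[C]^\times)^2]$: in contrast to the SL$_2$ case, the $\mathcal{SN}$-orbit of square-class translates collapses to a single $\tilde{\mathcal{N}}$-orbit modulo center in the PGL$_2$ setting.

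Finally, exactly as in steps (5)--(7) of the proof of \prettyref{prop:hlgynorm}, the resulting $E^2$-level comparison (combined with the degeneration from step two above and the fact that the stabilizer comparison is an isomorphism for $p\geq 1$) promotes to the desired long exact sequence of homology groups. The main obstacle, as in the SL$_2$ case, will be the careful bookkeeping of kernels and cokernels of the comparison maps on $E^2_{0,i}$ and $E^2_{1,i}$; here it is marginally simpler because no tensoring with $\mathbb{Z}[1/2,k[C]^\times/(k[C]^\times)^2]$ is required and only a single special vertex needs to be analysed.
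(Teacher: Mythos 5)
Your proposal follows essentially the same route the paper indicates for this statement (the paper omits the detailed proof, saying only that one first proves an analogue of \prettyref{lem:ssnorm} for $\tilde{\mathcal{N}}$ and then reruns the spectral-sequence comparison of \prettyref{prop:hlgynorm} using the $\op{PGL}_2$-part of \prettyref{prop:normalizer}), and your outline of that strategy -- including the key observation that $\mathcal{N}/\mathcal{T}\cong\mathbb{Z}/2$ rather than an extension by $k[C]^\times/(k[C]^\times)^2$, so that only one special vertex and no square-class tensor factor appear -- is correct. One slip: the image $\tilde{\mathcal{T}}$ of the diagonal matrices in $\op{PGL}_2(k[C])$ is isomorphic to $k[C]^\times$ (via $\op{diag}(a,b)\mapsto a/b$), sitting in an extension $1\to k^\times\to\tilde{\mathcal{T}}\to\mathcal{T}\to 1$ with $\mathcal{T}\cong k[C]^\times/k^\times$, not to $k[C]^\times/k^\times$ itself; this central $k^\times$ is precisely what gives every cell of $\mathcal{Z}^s$ a stabilizer $k^\times$ in $\tilde{\mathcal{N}}$ matching the $k^n\rtimes k^\times$ stabilizers in $\mathfrak{P}_C(\mathcal{L})$, so that the comparison via \cite[Theorem 4.6.2]{knudson:book} applies and the torus-type computation yields $\op{H}_\bullet(k[C]^\times)$ rather than $\op{H}_\bullet(k[C]^\times/k^\times)$.
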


The arguments are similar to the ones used for $\op{SL}_2$ above, and
we omit a detailed proof: the proof first proceeds through an analogue
of  \prettyref{lem:ssnorm} for the group $\tilde{\mathcal{N}}$, and
then mainly uses the $\op{PGL}_2$-part of \prettyref{prop:normalizer}
in much the  same way as happened in the proof of
\prettyref{prop:hlgynorm}.  

The groups $\mathcal{P}^1_i(k)$ appearing above are the generalized
scissors congruence groups of \cite{dupont}, cf. also
\prettyref{sec:rp1}. Using the long exact sequence connecting homology
of the normalizer $\op{N}(k)$, homology of $\op{SL}_2(k)$ and the
scissors congruence groups $\mathcal{P}^1_i(k)$, we can simplify the
above result further and describe the parabolic homology as the
following pushout, all with $\mathbb{Z}[1/2]$-coefficients:
$$
\op{H}_\bullet^{\op{PGL}_2(k[C])}(\mathfrak{P}_C(\mathcal{L}))\cong 
\op{H}_\bullet(\tilde{\mathcal{N}})\oplus_{\op{H}_\bullet(\op{N}(k))}
\op{H}_\bullet(\op{PGL}_2(k)). 
$$

\subsection{Functoriality} 
We have described the $\op{SL}_2(k[C])$-equivariant homology of 
the parabolic subcomplex $\mathfrak{P}_C$ above. We now want to
describe the behaviour of parabolic homology under morphisms of
curves. In particular we want to compute the colimit of 
these homologies to $\op{SL}_2(k(C))$ and $\op{SL}_2(\overline{k(C)})$. 

\begin{proposition}
\label{prop:hlgyfun}
Let $k$ be an algebraically closed field. 
Let $f:\overline{D}\rightarrow \overline{C}$ be a finite morphism of
smooth projective curves over $k$, let $P_1,\dots,P_s$ be
points on $\overline{C}$, and let $Q_1,\dots,Q_t$ be points on
$\overline{D}$ not in the preimage of the $P_i$. 
Set $C=\overline{C}\setminus\{P_1,\dots,P_s\}$ and
$D=\overline{D}\setminus(\{f^{-1}(\{P_1,\dots,P_s\}) \cup
\{Q_1,\dots,Q_t\})$. Denote by $\mathfrak{P}_C$ and $\mathfrak{P}_D$
the parabolic subcomplexes in $\mathfrak{X}_C$ and $\mathfrak{X}_D$,
respectively. 
Then we have the following assertions for the morphism
$$f^\ast:\op{H}_\bullet^{\op{SL}_2(k[C])}(\mathfrak{P}_C)\rightarrow
\op{H}_\bullet^{\op{SL}_2(k[D])}(\mathfrak{P}_D)$$ induced from the
morphism of \prettyref{prop:functor1}, where we use
$\mathbb{Z}[1/2]$-coefficients throughout: 

\begin{enumerate}
\item 
The composition 
$$
\mathcal{K}(C)\to\pi_0(\op{SL}_2(k[C])\backslash\mathfrak{P}_C)
\stackrel{\pi_0(f^\ast)}{\longrightarrow}
\pi_0(\op{SL}_2(k[D])\backslash\mathfrak{P}_D)\to\mathcal{K}(D)
$$
is induced from pullback of line bundles
$f^\ast:\op{Pic}(C)\to\op{Pic}(D)$, where the first and last map are the
bijections of \prettyref{lem:conn}.

\item For each line bundle $\mathcal{L}$ on $\overline{C}$, there is
  an induced
  map 
  $$f^\ast_{\mathcal{L}}:\op{H}_\bullet^{\op{SL}_2(k[C])}(\mathfrak{P}_C(\mathcal{L})) 
  \rightarrow 
  \op{H}_\bullet^{\op{SL}_2(k[D])}(\mathfrak{P}_D(f^\ast(\mathcal{L}))).$$ 
The map $f^\ast$ is the direct sum of these. 
\item If $\mathcal{L}$ is not $2$-torsion in $\op{Pic}(D)$, then
  $f^\ast_{\mathcal{L}}$ is identified via the isomorphisms of
  \prettyref{prop:hlgytorus} with the natural map
$$
\op{H}_\bullet(k[C]^\times)\to\op{H}_\bullet(k[D]^\times).
$$
\item If $\mathcal{L}$ is not $2$-torsion in $\op{Pic}(C)$ but becomes
  $2$-torsion in $\op{Pic}(D)$, then $f^\ast_{\mathcal{L}}$ is
  identified with the composition
$$
\op{H}_\bullet(k[C]^\times)\to \op{H}_\bullet(\tilde{\mathcal{SN}})\to
\op{H}_\bullet^{\op{SL}_2(k[C])}(\mathfrak{P}_D(f^\ast\mathcal{L})),
$$
where the first map is the inclusion of diagonal matrices into
monomial matrices, and the second map is the morphism from
\prettyref{prop:hlgynorm}. 
\item If $\mathcal{L}$ is $2$-torsion in $\op{Pic}(C)$, then  
  $f^\ast_{\mathcal{L}}$ sits in a commutative ladder connecting the
  respective long exact sequences of \prettyref{prop:hlgynorm} for
  $\mathfrak{P}_C$ and $\mathfrak{P}_D$. The other two types of maps
  of the commutative ladder, 
$\op{H}_\bullet(\tilde{\mathcal{SN}}_C)\to
\op{H}_\bullet(\tilde{\mathcal{SN}}_D)$ and 
$$
\mathcal{RP}^1_\bullet(k)\otimes_{\mathbb{Z}}
\mathbb{Z}[1/2,k[C]^\times/(k[C]^\times)^2]\to 
\mathcal{RP}^1_\bullet(k)\otimes_{\mathbb{Z}}
\mathbb{Z}[1/2,k[D]^\times/(k[D]^\times)^2]
$$
are induced by the natural map $k[C]^\times\to k[D]^\times$. 
\end{enumerate} 
\end{proposition}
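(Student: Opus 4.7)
The plan is to deduce everything from the compatibility of the equivariant maps of model complexes established in Propositions \ref{prop:functor1} and \ref{prop:functor2} with the spectral sequence computations of Propositions \ref{prop:hlgytorus} and \ref{prop:hlgynorm}. The functoriality map $f^\ast:\mathfrak{P}_C\to\mathfrak{P}_D$ is by construction equivariant for $\op{SL}_2(k[C])\to\op{SL}_2(k[D])$, so it induces a morphism of isotropy spectral sequences converging to the induced map $f^\ast$ on equivariant homology. Assertion (1) is immediate from \prettyref{prop:functor2}(1) together with \prettyref{lem:conn}. Assertion (2) follows because $f^\ast$ preserves connected components, and each connected component of $\mathfrak{P}_C$ indexed by $\mathcal{L}$ is mapped into the component indexed by $f^\ast(\mathcal{L})$.

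For assertion (3), both $\mathcal{L}|_C\not\cong\mathcal{L}|_C^{-1}$ and $f^\ast\mathcal{L}|_D\not\cong f^\ast\mathcal{L}|_D^{-1}$ are of torus type. By \prettyref{prop:functor2}(2) and (3), the equivariant inclusions $\phi_{\mathcal{L}}:\mathcal{Z}^s\hookrightarrow\mathfrak{P}_C(\mathcal{L})$ and $\phi_{f^\ast\mathcal{L}}:\mathcal{Z}^{\tilde s+t}\hookrightarrow\mathfrak{P}_D(f^\ast\mathcal{L})$ fit into a commutative square where the bottom edge is the diagonal inclusion on coordinates indexed by $f^{-1}(P_i)$ and the left edge is the restriction of $f^\ast$. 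Extending from $\mathcal{T}$ to $k[C]^\times$ as in the proof of \prettyref{prop:hlgytorus}, the vertical maps can be taken compatible with $k[C]^\times\to k[D]^\times$ induced by $f^\ast:k[C]\to k[D]$, which by \prettyref{prop:functor2}(3) is the natural inclusion on units. Passing to equivariant homology then identifies $f^\ast_{\mathcal{L}}$ with $\op{H}_\bullet(k[C]^\times)\to\op{H}_\bullet(k[D]^\times)$.

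For assertion (4), the source component is torus type and the target is normalizer type. The key observation is that the homomorphism $\tau_C:\mathcal{T}_C\to\op{SL}_2(k[C])$ of diagonal matrices extends along the inclusion $\mathcal{T}_C\hookrightarrow\mathcal{SN}_D$ to the normalizer homomorphism $\nu_D:\mathcal{SN}_D\to\op{SL}_2(k[D])$ because the diagonal part of $\tilde{\mathcal{SN}}_D$ contains the image of $k[C]^\times$. Together with \prettyref{prop:functor2}(2), this yields a commutative triangle of equivariant maps $\mathcal{Z}^s\to\mathcal{Z}^{\tilde s+t}\to\mathfrak{P}_D(f^\ast\mathcal{L})$ whose composition is $f^\ast\circ\phi_{\mathcal{L}}$. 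Applying \prettyref{prop:hlgytorus} to the left and \prettyref{prop:hlgynorm} to the right produces the claimed factorization through $\op{H}_\bullet(\tilde{\mathcal{SN}}_D)$, where the first arrow is the inclusion of diagonal into monomial matrices induced by $k[C]^\times\hookrightarrow k[D]^\times$.

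Assertion (5) is the main technical point. Here both components are of normalizer type, so we have equivariant maps $\phi_{\mathcal{L}}:\mathcal{Z}^s\to\mathfrak{P}_C(\mathcal{L})$ and $\phi_{f^\ast\mathcal{L}}:\mathcal{Z}^{\tilde s+t}\to\mathfrak{P}_D(f^\ast\mathcal{L})$ with respect to $\tilde\nu_C$ and $\tilde\nu_D$. I would first show that $\tilde\nu_D$ restricted to the subgroup $\tilde{\mathcal{SN}}_C\hookrightarrow\tilde{\mathcal{SN}}_D$ (induced by the natural embedding $k[C]^\times\hookrightarrow k[D]^\times$) agrees with $\tilde\nu_C$ followed by $\op{SL}_2(k[C])\to\op{SL}_2(k[D])$; this is a direct computation with the explicit monomial matrices defined in \prettyref{prop:normalizer}. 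Combined with \prettyref{prop:functor2}(2), this gives a commutative square of equivariant maps of model complexes, hence a morphism between the two isotropy spectral sequences analyzed in the proof of \prettyref{prop:hlgynorm}. Since the long exact sequence there was constructed entirely from the $E^2$-terms (where only the $(p=0)$-column differs between the $\tilde{\mathcal{SN}}$- and $\op{SL}_2(k[C])$-sequences, via the local exact sequences of \prettyref{prop:rp1exact}), naturality of those local exact sequences in the coefficient module $\mathbb{Z}[1/2,k[\cdot]^\times/(k[\cdot]^\times)^2]$ produces the claimed commutative ladder. The main obstacle, and the step I would check most carefully, is the behaviour of the differential $d^1_{1,\bullet}$ at the special vertices under the model complex map $\mathcal{Z}^s\to\mathcal{Z}^{\tilde s+t}$, since a single vertex of $\mathcal{Z}^s$ may map into a nontrivial $\mathcal{ST}_D$-orbit of special vertices in $\mathcal{Z}^{\tilde s+t}$, and one must verify this is accurately reflected by the map on $k[C]^\times/(k[C]^\times)^2\to k[D]^\times/(k[D]^\times)^2$.
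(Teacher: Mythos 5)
Your proposal is correct and follows exactly the route the paper takes: the paper's own proof is a one-line citation of Proposition~\ref{prop:functor2} together with the computations of Propositions~\ref{prop:hlgytorus} and~\ref{prop:hlgynorm}, and your argument simply fleshes out that derivation case by case. The extra care you devote in assertion (5) to the behaviour of the special vertices under $\mathcal{Z}^s\to\mathcal{Z}^{\tilde s+t}$ is a reasonable point to flag, but it is subsumed in the paper's appeal to the cited propositions.
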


\begin{proof}
The result is a direct consequence of \prettyref{prop:functor2} and the
computations in \prettyref{prop:hlgytorus} and
\prettyref{prop:hlgynorm}. 
\end{proof}

Obviously, there is a similar functoriality result for the case
$\op{PGL}_2$ where again all relevant maps are induced from the
natural one $f^\ast:k[C]^\times\to k[D]^\times$. 

\begin{definition}
Let $k$ be an algebraically closed field, let $\overline{C}$ be a
smooth projective curve over $k$, and set
$C=\overline{C}\setminus\{P_1,\dots,P_s\}$. We define the
\emph{parabolic homology} of $\op{SL}_2(k(C))$ to be 
$$
\widehat{\op{H}}_\bullet(\op{SL}_2(k(C)),\mathbb{Z}/\ell)=
\colim_{S\subseteq\overline{C}(k)}
\op{H}_\bullet^{\op{SL}_2(k[\overline{C}\setminus
  S])}(\mathfrak{P}_{\overline{C}\setminus S},\mathbb{Z}/\ell),
$$
where the colimit is taken over all finite sets $S$ of closed points
of $\overline{C}$, ordered by inclusion.
\end{definition}

As a direct consequence of \prettyref{prop:hlgyfun}, we can describe
the parabolic homology for function fields of curves. 

\begin{proposition}
\label{prop:fthlgy}
Let $k$ be an algebraically closed field, let $C$ be a smooth curve
over $k$ and let $\ell$ be an odd prime different from the characteristic
of $k$. Then we have the following exact sequence 
$$
\cdots\to
\mathcal{RP}^1_{i+1}(k,\mathbb{Z}/\ell)[k(C)^\times/(k(C)^\times)^2]\to 
\op{H}_i(\op{N}(k(C)),\mathbb{Z}/\ell)\to 
$$
$$
\to
\widehat{\op{H}}_i(\op{SL}_2(k(C)),\mathbb{Z}/\ell)
\to 
\mathcal{RP}^1_i(k,\mathbb{Z}/\ell)[k(C)^\times/(k(C)^\times)^2]\to
\cdots, 
$$
where $\op{N}(k(C))$ denotes the normalizer of a maximal torus in
$\op{SL}_2(k(C))$. 
\end{proposition}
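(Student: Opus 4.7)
The plan is to obtain the exact sequence as the filtered colimit, over finite sets $S \subseteq \overline{C}(k)$ containing $\overline{C}\setminus C$ ordered by inclusion, of the mod-$\ell$ analogue of the long exact sequence in \prettyref{prop:hlgynorm} applied to the trivial line bundle component $\mathfrak{P}_{C_S}(\mathcal{O})$, where $C_S = \overline{C}\setminus S$.

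The first preparatory step is to identify $\widehat{\op{H}}_\bullet(\op{SL}_2(k(C)),\mathbb{Z}/\ell) \cong \colim_S \op{H}^{\op{SL}_2(k[C_S])}_\bullet(\mathfrak{P}_{C_S}(\mathcal{O}),\mathbb{Z}/\ell)$, i.e.\ to show that in the colimit only the trivial component survives. By \prettyref{thm:mainsl2}(1) the parabolic homology at each stage splits over $\mathcal{K}(C_S)$, so the total colimit is the colimit of all such direct sums. The key point is that, by \prettyref{lem:picc}, any line bundle on some $C_{S_0}$ becomes trivial on $C_{S'}$ once $S'$ is enlarged to contain the support of a representative divisor; hence each element of $\mathcal{K}(C_{S_0})$ eventually becomes the trivial class. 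The functoriality results of \prettyref{prop:hlgyfun}(3)--(5) then show that every non-trivial summand at stage $S_0$ is absorbed into the trivial component at some later $S'$, so the inclusion $\op{H}^{\op{SL}_2(k[C_S])}_\bullet(\mathfrak{P}_{C_S}(\mathcal{O})) \hookrightarrow \op{H}^{\op{SL}_2(k[C_S])}_\bullet(\mathfrak{P}_{C_S})$ becomes an isomorphism on colimits.

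The second step is to take the filtered colimit of the mod-$\ell$ version of the long exact sequence of \prettyref{prop:hlgynorm} for the trivial component. Since filtered colimits preserve exactness, this yields a long exact sequence. The colimit terms are then identified routinely: any $u\in k(C)^\times$ has only finitely many zeros and poles, so $\colim_S k[C_S]^\times = k(C)^\times$, which gives the asserted $\mathcal{RP}^1_\bullet(k,\mathbb{Z}/\ell)[k(C)^\times/(k(C)^\times)^2]$ term; the colimit of the monomial matrix groups $\tilde{\mathcal{SN}}_{C_S}$ is precisely the group of monomial matrices in $\op{SL}_2(k(C))$, namely the normalizer $\op{N}(k(C))$ of the diagonal maximal torus, and since group homology commutes with filtered colimits this yields $\op{H}_\bullet(\op{N}(k(C)),\mathbb{Z}/\ell)$; finally, $\mathcal{RP}^1_i(k,\mathbb{Z}/\ell)$ is independent of $S$.

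The principal technical point, and the main potential obstacle, is the verification in the second paragraph: one must check that the ladder of maps furnished by \prettyref{prop:hlgyfun} routes every contribution of a non-trivial component through the $\op{H}_\bullet(\tilde{\mathcal{SN}})$-part of the trivial component's long exact sequence at a later stage, and \emph{not} into the $\mathcal{RP}^1$-term. This is a bookkeeping exercise that should follow from tracking the commutative ladders of parts (3), (4), (5) of \prettyref{prop:hlgyfun} together with the mod-$\ell$ variant of \prettyref{prop:hlgynorm}, observing that non-$2$-torsion-to-$2$-torsion transitions are always mediated by the inclusion of the diagonal torus into the normalizer rather than by a direct map to $\mathcal{RP}^1$.
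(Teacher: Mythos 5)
Your proposal is correct and follows essentially the same route as the paper, whose proof of this statement is simply to pass to the colimit using the functoriality package of \prettyref{prop:hlgyfun}; your added observation that the pairs $(S,\mathcal{O})$ are cofinal among pairs $(S,[\mathcal{L}])$ because every line bundle trivializes after removing the support of a representing divisor is exactly the point that makes this work. The ``principal technical point'' you flag at the end is in fact already disposed of by that cofinality argument: once the colimit is identified with the colimit over trivial components, one only needs the commutative ladders of \prettyref{prop:hlgyfun}(5) between the long exact sequences of \prettyref{prop:hlgynorm} at successive stages and exactness of filtered colimits, with no further bookkeeping of how non-trivial components are absorbed.
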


\subsection{Low-dimensional example calculations}

We discuss low-dimensional special cases of the above computations in
\prettyref{prop:hlgytorus} and \prettyref{prop:hlgynorm}. In low
degrees, we can say more about the precise relation between the
homology of $\op{SL}_2(k)$ and the homology of the normalizer of the
maximal torus $\op{N}(k)$.

We start with the degree $1$ case: 

\begin{corollary}
Let $k$ be an algebraically closed field, let $\overline{C}$ be a
smooth projective curve and denote
$C=\overline{C}\setminus\{P_1,\dots,P_s\}$. 
There is an isomorphism
$$
\op{H}_1^{\op{SL}_2(k[C])}(\mathfrak{P}_C,\mathbb{Z}[1/2])\cong 
\bigoplus_{[\mathcal{L}]\in\mathcal{K}(C), 2\mathcal{L}\neq 0}
k[C]^\times\otimes_{\mathbb{Z}}\mathbb{Z}[1/2].
$$
In the limit, we have
$\widehat{\op{H}}_1(\op{SL}_2(k(C)),\mathbb{Z}[1/2])=0$.
\end{corollary}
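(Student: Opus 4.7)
The plan is to apply the decomposition in part (1) of \prettyref{thm:mainsl2} and then compute each summand. For non-$2$-torsion components, \prettyref{prop:hlgytorus} already identifies the first homology as $\op{H}_1(k[C]^\times,\mathbb{Z}[1/2]) \cong k[C]^\times \otimes_{\mathbb{Z}} \mathbb{Z}[1/2]$, which already matches the asserted sum. Thus the substantive task is to show that each $2$-torsion component contributes nothing in degree $1$, and then to exploit the functoriality to conclude the colimit statement.

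For the $2$-torsion components, I would read off from the long exact sequence in \prettyref{prop:hlgynorm} that the group $\op{H}_1^{\op{SL}_2(k[C])}(\mathfrak{P}_C(\mathcal{L}),\mathbb{Z}[1/2])$ is sandwiched between $\op{H}_1(\tilde{\mathcal{SN}},\mathbb{Z}[1/2])$ and $\mathcal{RP}^1_1(k)\otimes_{\mathbb{Z}}\mathbb{Z}[1/2,k[C]^\times/(k[C]^\times)^2]$, so I need to show both outer terms vanish. For $\op{H}_1(\tilde{\mathcal{SN}},\mathbb{Z}[1/2])$, I would use the extension $1\to k[C]^\times\to\tilde{\mathcal{SN}}\to\mathbb{Z}/2\to 1$ together with Lyndon--Hochschild--Serre; with $\mathbb{Z}[1/2]$-coefficients the spectral sequence collapses to coinvariants of the action by inversion on $k[C]^\times\otimes\mathbb{Z}[1/2]$. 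Over $\mathbb{Z}[1/2]$, coinvariants agree with invariants, and using the splitting $k[C]^\times \cong k^\times \oplus \mathbb{Z}^r$ from \prettyref{lem:picc}, the invariants under inversion are the $2$-torsion of $k[C]^\times$, which is killed by inverting $2$ (the $k^\times$-summand is $2$-divisible since $k$ is algebraically closed, and $\mathbb{Z}^r$ is torsion-free). For $\mathcal{RP}^1_1(k)$, I would invoke the appendix's exact sequence connecting the refined scissors congruence groups to $\op{H}_\bullet(\op{SL}_2(k))$ and $\op{H}_\bullet(\op{N}(k))$: with $\mathbb{Z}[1/2]$-coefficients one has $\op{H}_1(\op{SL}_2(k))=0$ (since $\op{SL}_2(k)$ is perfect), and the map $\op{H}_0(\op{N}(k),\mathbb{Z}[1/2])\to \op{H}_0(\op{SL}_2(k),\mathbb{Z}[1/2])$ is an isomorphism, pinching $\mathcal{RP}^1_1(k)$ to zero.

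For the vanishing of the colimit, I would apply \prettyref{prop:hlgyfun}. Any line bundle $\mathcal{L}\in\op{Pic}(\overline{C}\setminus S)$ lifts to $\op{Pic}(\overline{C})$ and is representable by a divisor whose support can be absorbed into an enlarged finite set $S'\supseteq S$; then $\mathcal{L}|_{\overline{C}\setminus S'}$ is trivial, in particular $2$-torsion. By parts (3)--(4) of \prettyref{prop:hlgyfun}, the induced map on the $\mathcal{L}$-summand then factors through $\op{H}_1(\tilde{\mathcal{SN}}_{\overline{C}\setminus S'},\mathbb{Z}[1/2])$, which is zero by the computation of the previous paragraph. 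Since the directed system eventually kills every class in every summand, the colimit vanishes.

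The main obstacle is the bookkeeping needed to reduce both outer terms of the exact sequence of \prettyref{prop:hlgynorm} to $2$-divisibility statements, but once Lyndon--Hochschild--Serre and the scissors-congruence exact sequence are set up, the rest is formal. The colimit statement then follows by absorbing divisor supports into the removed point set.
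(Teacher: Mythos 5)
Your proposal is correct and follows essentially the same route as the paper: decompose by components via \prettyref{lem:conn}, read off the torus-type components from \prettyref{prop:hlgytorus}, kill the normalizer-type components by showing $\op{H}_1(\tilde{\mathcal{SN}},\mathbb{Z}[1/2])=0$ (Hochschild--Serre for the extension by $k[C]^\times$ with the inversion action) and $\mathcal{RP}^1_1(k)=0$, and deduce the colimit statement from functoriality. The only cosmetic differences are that you derive $\mathcal{RP}^1_1(k)=0$ from the exact sequence of \prettyref{prop:rp1exact} rather than from the direct observation that the defining quotient complex is concentrated in degrees $\geq 2$, and that you argue the limit claim directly via \prettyref{prop:hlgyfun} instead of quoting \prettyref{prop:fthlgy}.
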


\begin{proof}
By \prettyref{prop:hlgytorus}, if $\mathcal{L}$ is a line bundle on
$\overline{C}$ which is not $2$-torsion in $\op{Pic}(C)$, then
$$
\op{H}_1^{\op{SL}_2(k[C])}(\mathfrak{P}_C(\mathcal{L}),\mathbb{Z}[1/2])\cong
k[C]^\times\otimes_{\mathbb{Z}}\mathbb{Z}[1/2]. 
$$

Now let $\mathcal{L}$ be a line bundle on $\overline{C}$ which is
$2$-torsion in $\op{Pic}(C)$. We investigate the components of the
exact sequence computing
$\op{H}_1^{\op{SL}_2(k[C])}(\mathfrak{P}_C(\mathcal{L}),\mathbb{Z}[1/2])$.  
The group $\tilde{\mathcal{SN}}$ is an extension of the free abelian
group $\tilde{\mathcal{ST}}$ by a $2$-torsion group $G$. Obviously,
$\op{H}_1(G,\op{H}_0(\tilde{\mathcal{ST}},\mathbb{Z}[1/2]))=0$. But 
$$
\op{H}_0(G,\op{H}_1(\tilde{\mathcal{ST}},\mathbb{Z}[1/2]))\cong 
\op{H}_0(G,\tilde{\mathcal{ST}}\otimes_{\mathbb{Z}}\mathbb{Z}[1/2])=0
$$
because any element of $G$ acts by inversion followed by
multiplication with some element, whence the coinvariants are
$2$-torsion. The Hochschild-Serre spectral sequence for the extension
then implies
$\op{H}_1(\tilde{\mathcal{SN}},\mathbb{Z}[1/2])=0$. Similarly, the 
computations in \cite{hutchinson:bw} imply that
$\mathcal{RP}^1_1(k)=0$, cf. \prettyref{sec:rp1}, and therefore
$$\mathcal{RP}^1_1(k)\otimes_{\mathbb{Z}}
\mathbb{Z}[1/2,k(C)^\times/(k(C)^\times)^2]=0.
$$
The exact sequence of \prettyref{prop:hlgynorm} then implies
$\op{H}_1^{\op{SL}_2(k[C])}(\mathfrak{P}_C(\mathcal{L}),\mathbb{Z}[1/2])=0$. 

The claim then follows from \prettyref{lem:conn}. The limit claim
follows from the above and \prettyref{prop:fthlgy}.
\end{proof}

\begin{corollary}
Let $k$ be an algebraically closed field, let $\overline{C}$ be a
smooth projective curve and denote
$C=\overline{C}\setminus\{P_1,\dots,P_s\}$. 
There is an exact sequence 
$$
0\to\op{H}_3(\tilde{\mathcal{SN}},\mathbb{Z}[1/2]) \to
\op{H}_3^{\op{SL}_2(k[C])}(\mathfrak{P}_C(\mathcal{L}),\mathbb{Z}[1/2])\to $$
$$
\to
\mathcal{RP}^1_3(k)\otimes_{\mathbb{Z}}\mathbb{Z}[1/2,k[C]^\times/(k[C]^\times)^2]\to 
$$
$$
\to \op{H}_2(\tilde{\mathcal{SN}},\mathbb{Z}[1/2])\to
\op{H}_2^{\op{SL}_2(k[C])}(\mathfrak{P}_C(\mathcal{L}),\mathbb{Z}[1/2])\to 0.
$$
The maps are the ones appearing in the Bloch-Wigner exact sequence,
cf. \cite{hutchinson:bw}.
\end{corollary}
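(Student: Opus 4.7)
The plan is to deduce the displayed sequence directly from the long exact sequence of \prettyref{prop:hlgynorm} applied in degrees $i=2,3$, by truncating it on both sides. Abbreviating $M = \mathbb{Z}[1/2, k[C]^\times/(k[C]^\times)^2]$, the relevant stretch reads
$$
\mathcal{RP}^1_4(k)\otimes_{\mathbb{Z}} M \to
\op{H}_3(\tilde{\mathcal{SN}},\mathbb{Z}[1/2]) \to
\op{H}_3^{\op{SL}_2(k[C])}(\mathfrak{P}_C(\mathcal{L}),\mathbb{Z}[1/2]) \to
\mathcal{RP}^1_3(k)\otimes_{\mathbb{Z}} M
$$
$$
\to \op{H}_2(\tilde{\mathcal{SN}},\mathbb{Z}[1/2]) \to
\op{H}_2^{\op{SL}_2(k[C])}(\mathfrak{P}_C(\mathcal{L}),\mathbb{Z}[1/2]) \to
\mathcal{RP}^1_2(k)\otimes_{\mathbb{Z}} M,
$$
so it suffices to show that the first and the last arrows vanish.

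For the right-hand end I would invoke $\mathcal{RP}^1_2(k)=0$ for algebraically closed $k$, recorded in \prettyref{sec:rp1} following \cite{hutchinson:bw}; concretely it reflects surjectivity of the Milnor symbol $\bigwedge^2 k^\times \to K_2^{\mathrm{M}}(k)$ over such $k$. Since $M$ is free as a $\mathbb{Z}[1/2]$-module on square classes, and hence $\mathbb{Z}$-flat, the tensor product $\mathcal{RP}^1_2(k)\otimes_{\mathbb{Z}} M$ also vanishes, yielding the terminating zero.

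For the left-hand end, I would trace the construction of the connecting map in step (7) of the proof of \prettyref{prop:hlgynorm}: it was assembled summand-by-summand, one copy per $\tilde{\mathcal{SN}}$-orbit of special vertex in $\mathcal{Z}^s$, from the connecting homomorphism of the single-vertex refined Bloch-Wigner long exact sequence \prettyref{prop:rp1exact} relating $\op{H}_\bullet(\op{N}(k))$, $\op{H}_\bullet(\op{SL}_2(k))$ and $\mathcal{RP}^1_\bullet(k)$. That sequence begins $0 \to \op{H}_3(\op{N}(k),\mathbb{Z}[1/2]) \to \op{H}_3(\op{SL}_2(k),\mathbb{Z}[1/2])$, so its first connecting arrow $\mathcal{RP}^1_4(k) \to \op{H}_3(\op{N}(k),\mathbb{Z}[1/2])$ is zero. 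Summing over the relevant orbits and tensoring with the flat module $M$ produces the vanishing we need on the left.

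With both ends zero, the three remaining arrows in the truncated sequence are, by the same vertex-by-vertex comparison, precisely the $M$-twisted versions of the non-zero arrows of the Bloch-Wigner sequence, which accounts for the final sentence of the statement. The main delicate point I anticipate is verifying that the identification of connecting maps in step (7) of \prettyref{prop:hlgynorm} really matches the Bloch-Wigner connecting map on each summand rather than differing by an automorphism introduced in the spectral sequence comparison; everything else is formal.
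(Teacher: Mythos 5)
Your proposal is correct and is essentially the intended derivation: one truncates the long exact sequence of \prettyref{prop:hlgynorm} at $i=2,3$ using that both outer connecting maps vanish --- the right-hand one because $\mathcal{RP}^1_2(k)\cong\op{I}(k)=0$ for $k$ algebraically closed (so the tensor product with $\mathbb{Z}[1/2,k[C]^\times/(k[C]^\times)^2]$ is zero irrespective of flatness), and the left-hand one because, by its construction in steps (4)--(7) of the proof of \prettyref{prop:hlgynorm}, it factors through the direct sum over special vertices of the Bloch--Wigner connecting map $\mathcal{RP}^1_4(k)\to\op{H}_3(\op{N}(k),\mathbb{Z}[1/2])$, which is zero by the sequence recorded at the end of \prettyref{sec:rp1}. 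The one imprecision is your justification of $\mathcal{RP}^1_2(k)=0$ via surjectivity of the Milnor symbol: the clean reason is that every even-dimensional nondegenerate quadratic form over an algebraically closed field is hyperbolic, so the fundamental ideal $\op{I}(k)$ vanishes (alternatively, the Bloch--Wigner sequence itself already gives that $\op{H}_2(\op{SL}_2(k),\mathbb{Z}[1/2])\to\mathcal{RP}^1_2(k)$ is the zero map, which suffices); your closing caveat about identifying the maps affects only the final sentence of the statement, not the exactness claim.
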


\appendix
\section{Refined scissors congruence groups}
\label{sec:rp1}

The following section provides a recollection on the exact
sequence relating homology of $\op{SL}_2(k)$, homology of the
normalizer of a maximal torus $\op{N}(k)$ and suitable generalizations
of pre-Bloch groups $\mathcal{RP}^1_\bullet(k)$. The material is 
well-known from the work of Bloch, Suslin, Dupont, Sah, Hutchinson and
others, cf. \cite{suslin:k3}, \cite{dupont},
\cite{hutchinson:bw}. As the statements are somehow scattered over the
literature and not usually stated in the generality needed, we provide
a detailed review of the definitions of the refined scissors
congruence groups (or point configuration groups)
$\mathcal{RP}^1_\bullet(k)$ and the proof of the relevant exact
sequences for homology of $\op{SL}_2(k)$. 

\subsection{Points on the projective line}

Let $F$ be an infinite field. Consider the standard action of
the general linear group $\op{GL}_2(F)$ on the $F$-points of the
projective line $\mathbb{P}^1(F)$ given by 
$$
\left(\begin{array}{cc}
a&b\\c&d
\end{array}\right)\cdot z\mapsto \frac{az+b}{cz+d}.
$$
As the center obviously acts trivially, the group action factors
through $\op{PGL}_2(F)$. Restriction to determinant $1$ provides the
standard group actions of $\op{SL}_2(F)$ and $\op{PSL}_2(F)$ on
$\mathbb{P}^1(F)$.

\begin{definition}
Let $F$ be an infinite field. 
\begin{enumerate}
\item Denote by $C_\bullet(F)$ the \emph{complex of points on
    $\mathbb{P}^1$}, which in degree $n$ has the free abelian group
  $C_n(F)$ generated by $(n+1)$-tuples $(x_0,\dots,x_n)$ of distinct
  points on $\mathbb{P}^1(F)$.  
\item Denote by $C_\bullet^{\op{alt}}(F)$ the \emph{alternating complex of
  points on $\mathbb{P}^1$}, where $C_n^{\op{alt}}(F)$ is the free
  abelian group generated by $(n+1)$-tuples $(x_0,\dots,x_n)$ of
  points on $\mathbb{P}^1(F)$ modulo the identifications 
$$
(x_{\pi(0)},\dots,x_{\pi(n)})=\op{sgn}\pi\cdot (x_0,\dots,x_n),
$$
where $\pi$ is any permutation of the set of indices $\{0,\dots,n\}$.
\end{enumerate}
In both the above cases, the differential is given by the obvious 
$$
d(x_0,\dots,x_n)=\sum_{i=0}^n(-1)^i(x_0,\dots,\widehat{x_i},\dots,x_n).
$$
The complexes are augmented by mapping 
$$
C_0(F)=C_0^{\op{alt}}(F)\to\mathbb{Z}:(x_0)\mapsto 1.
$$
\end{definition}

We recall the well-known acyclicity lemma, cf. \cite[Lemma
2.3.2]{knudson:book}, \cite[Lemma 3.6]{dupont} or any of a huge number
of further possible sources.

\begin{lemma}
The augmented complexes $C^{(\op{alt})}_\bullet(F)\to \mathbb{Z}\to 0$
of points on $\mathbb{P}^1$ are acyclic. 
\end{lemma}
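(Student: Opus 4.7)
The plan is to construct a contracting chain homotopy by the standard ``cone from a generic point'' trick, exploiting that $F$ is infinite. Fix a chain $c \in C_n(F)$ and let $S(c) \subset \mathbb{P}^1(F)$ denote the (finite) set of points appearing in the tuples of $c$. Since $F$ is infinite, we can choose $y_0 \in \mathbb{P}^1(F) \setminus S(c)$. Define
\[
s(x_0,\dots,x_n) = (y_0,x_0,\dots,x_n),
\]
which is a well-defined element of $C_{n+1}(F)$ whenever $y_0 \notin \{x_0,\dots,x_n\}$. Extending linearly to the subcomplex $C_\bullet^{y_0}(F) \subseteq C_\bullet(F)$ generated by tuples avoiding $y_0$, a direct computation gives
\[
(ds + sd)(x_0,\dots,x_n) = (x_0,\dots,x_n),
\]
so $s$ is a contracting homotopy on $C_\bullet^{y_0}(F)$. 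In degree $-1$, the augmentation $\epsilon$ is visibly surjective.

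Given a cycle $c \in C_n(F)$ with $n \geq 0$ (interpreting cycles in degree $0$ as $\epsilon$-null chains, so that $\sum n_i = 0$), pick $y_0 \notin S(c)$. Then $c \in C_n^{y_0}(F)$, hence $c = d s(c) + s(dc) = d s(c)$ when $n \geq 1$; for $n = 0$, the augmentation condition $\sum n_i = 0$ kills the extra term $(\sum n_i)(y_0)$ that appears in $d s(c)$, and again $c = d s(c)$. This proves acyclicity of the augmented complex $C_\bullet(F) \to \mathbb{Z} \to 0$.

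For the alternating version, the natural projection $p \colon C_\bullet(F) \to C_\bullet^{\op{alt}}(F)$ is a surjective map of augmented chain complexes. One could either check that the homotopy $s$ descends (after antisymmetrization) or, more cleanly, split $p$: the antisymmetrization operator
\[
A(x_0,\dots,x_n) = \tfrac{1}{(n+1)!}\sum_{\pi \in \Sigma_{n+1}} \op{sgn}(\pi)\,(x_{\pi(0)},\dots,x_{\pi(n)})
\]
is a chain map providing a section of $p$ over $\mathbb{Q}$; over $\mathbb{Z}$, one argues directly that the same ``prepend $y_0$'' formula gives a contracting homotopy on $C_\bullet^{\op{alt}}(F)$ since it is compatible with sign conventions under the symmetric group action, and apply the same cycle-by-cycle argument as above.

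There is no real obstacle here; the only minor point of care is that the homotopy $s$ is not globally defined on $C_\bullet(F)$ but only on the subcomplex avoiding the chosen point $y_0$, so the acyclicity is proved one cycle at a time by choosing $y_0$ outside the finite set $S(c)$, which is exactly where the hypothesis that $F$ be infinite enters.
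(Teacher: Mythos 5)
Your proof is correct and takes essentially the same route as the paper: the ``prepend a generic point'' contraction, applied one cycle at a time using that $F$ is infinite so that a point $y_0$ outside the finite support can always be found. The only (harmless) difference is in the alternating case, where the paper notes that a single global base point already gives a contracting homotopy --- degenerate tuples are permitted in $C^{\op{alt}}_\bullet(F)$, so there is no need to avoid the support or to invoke antisymmetrization --- while your cycle-by-cycle variant works just as well.
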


\begin{proof}
Let $\sum_{j=1}^Nn_j(x_{j,0},\dots,x_{j,q})$ be a $q$-cycle in
$C_\bullet(F)$. Since $F$ is infinite, there exists a point
$x\in\mathbb{P}^1(F)$ different from any of the points $x_{j,q}$. Then 
$$
d\left(\sum_{j=1}^Nn_j(x,x_{j,0},\dots,x_{j,q})\right)=
\sum_{j=1}^Nn_j(x_{j,0},\dots,x_{j,q}).
$$

For the alternating complexes, one can even choose a global base-point
$x\in\mathbb{P}^1$ and use the global contraction 
$$
s_q:C^{\op{alt}}_q(F)\to C^{\op{alt}}_{q+1}(F):(x_0,\dots,x_q)\mapsto
(x,x_0,\dots,x_q), s_{-1}(1)=(x).
$$
\end{proof}

Contractibility implies that the $\op{SL}_2(F)$-equivariant homology
of the complexes of points on $\mathbb{P}^1$ can indeed be identified
with group homology of $\op{SL}_2(F)$. 

\begin{corollary}
\label{cor:acyclic}
Let $F$ be an infinite field, and let $\Gamma$ be any one of the
groups $\op{(P)GL}_2(F)$ or $\op{(P)SL}_2(F)$. Then the augmentation
induces isomorphisms 
$$
\op{H}_\bullet(\Gamma,C^{(\op{alt})}_\bullet(F))
\stackrel{\cong}{\longrightarrow} \op{H}_\bullet(\Gamma,\mathbb{Z}).
$$
\end{corollary}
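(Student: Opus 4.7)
The plan is to deduce the corollary directly from the acyclicity lemma via a hyperhomology argument. Recall that by the definition given in the preliminaries section, $\op{H}_\bullet(\Gamma, C^{(\op{alt})}_\bullet(F))$ is the homology of the total complex $P_\bullet \otimes_{\mathbb{Z}[\Gamma]} C^{(\op{alt})}_\bullet(F)$, where $P_\bullet \to \mathbb{Z}$ is any projective resolution of the trivial $\mathbb{Z}[\Gamma]$-module. This double complex carries two standard spectral sequences, and I would use the one filtered by the $C$-degree, which takes the form
$$E^2_{p,q} = \op{H}_p(\Gamma, \op{H}_q(C^{(\op{alt})}_\bullet(F))) \Rightarrow \op{H}_{p+q}(\Gamma, C^{(\op{alt})}_\bullet(F)).$$

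First I would apply the preceding acyclicity lemma, which gives $\op{H}_q(C^{(\op{alt})}_\bullet(F)) = 0$ for $q > 0$ and $\op{H}_0(C^{(\op{alt})}_\bullet(F)) \cong \mathbb{Z}$ with the isomorphism induced by the augmentation. The $E^2$-page is then concentrated in the column $q = 0$ and equals $\op{H}_p(\Gamma, \mathbb{Z})$ there, so the spectral sequence degenerates at $E^2$. The resulting isomorphism on abutments is exactly the edge morphism coming from the map of complexes $C^{(\op{alt})}_\bullet(F) \to \mathbb{Z}$ provided by the augmentation, yielding the required identification.

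The argument is uniform in the group $\Gamma$: the acyclicity lemma is a statement about complexes of abelian groups and makes no reference to any group action, and the hyperhomology spectral sequence is entirely formal. The same reasoning therefore covers all four cases $\Gamma \in \{\op{GL}_2(F), \op{SL}_2(F), \op{PGL}_2(F), \op{PSL}_2(F)\}$. There is no genuine obstacle; the only point requiring minor care is the identification of the comparison morphism with the map induced by the augmentation, which follows from naturality of the hyperhomology spectral sequence applied to the chain map $C^{(\op{alt})}_\bullet(F) \to \mathbb{Z}[0]$.
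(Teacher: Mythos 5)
Your argument is correct and is exactly the standard hyperhomology argument that the paper leaves implicit when it derives this corollary from the acyclicity lemma: the augmentation is a quasi-isomorphism of non-negatively graded complexes of $\Gamma$-modules, so the second hyperhomology spectral sequence (using that projectives are flat, hence $E^2_{p,q}=\op{H}_p(\Gamma,\op{H}_q(C^{(\op{alt})}_\bullet(F)))$ collapses to the column $q=0$ with trivial $\Gamma$-action on $\op{H}_0\cong\mathbb{Z}$) identifies the abutments via the edge map induced by the augmentation. No gaps; the uniformity over the four choices of $\Gamma$ is as you say.
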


The hyperhomology spectral sequence associated to
$\op{H}_\bullet(\Gamma,C_\bullet^{(\op{alt})})$ provides the relation between
homology of $\Gamma$, the normalizer of a maximal torus in $\Gamma$
and the refined scissors congruence groups. 

There is a  notion of decomposability of point configurations in
projective $n$-space, cf. \cite[p. 126]{dupont}. Using
$\mathbb{Z}[1/2]$-coefficients and noticing that $(x,x)=-(x,x)$
implies that $(x,x)$ is $2$-torsion, we can simplify the notion of
decomposability for our purposes: for the projective
line, an $(n+1)$-tuple of points $(x_0,\dots,x_n)$ is decomposable if
and only if its support contains at most two points. In particular, a
non-trivial element in $C^{(\op{alt})}_q(F)$ is decomposable if and
only if $q\leq 1$. The \emph{decomposable subcomplex of the complex of
  points} is defined as follows:
$$
F_0C^{(\op{alt})}_q(F)=\left\{\begin{array}{ll}
C^{(\op{alt})}_q(F) & q\leq 1\\
0  &\textrm{otherwise}
\end{array}\right.
$$
It is obviously stable under the action of $\op{(P)GL}_2(F)$.

\begin{definition}
\label{def:rp1}
Let $F$ be an infinite field. The \emph{refined scissors congruence
  groups} (or \emph{refined point configuration groups}) are defined as
$$
\mathcal{RP}^1_\bullet(F):=
\op{H}_\bullet(\op{SL}_2(F),C^{\op{alt}}_{\bullet}(F)/F_0C^{\op{alt}}_\bullet(F))
$$
The \emph{scissors congruence groups} (or \emph{point configuration
  groups}) are defined as 
$$
\mathcal{P}^1_\bullet(F):=
\op{H}_\bullet(\op{PGL}_2(F),C^{\op{alt}}_{\bullet}(F)/F_0C^{\op{alt}}_\bullet(F))
$$
\end{definition}

\begin{remark}
For considerations with finite coefficients, we can define similar
groups 
$$
\mathcal{RP}^1_\bullet(F,\mathbb{Z}/\ell):=
\op{H}_\bullet(\op{SL}_2(F),
C^{\op{alt}}_{\bullet}(F)/F_0C^{\op{alt}}_\bullet(F)
\otimes_{\mathbb{Z}}\mathbb{Z}/\ell) 
$$
\end{remark}

\begin{remark}
The notation $\mathcal{P}^1_q(F)$ is taken from \cite{dupont}. The
notation $\mathcal{RP}^1_q(F)$ for the refined scissors congruence
groups is a combination of the notation from \cite{dupont} and 
\cite{hutchinson:bw}. In \cite{hutchinson:bw}, only the groups
$\mathcal{RP}^1_3(F)$ are considered (and simply denoted
$\mathcal{RP}_1(F)$). 
\end{remark}

\begin{proposition}
If $F$ is a quadratically closed field,  the natural map
$\op{SL}_2(F)\to\op{PGL}_2(F)$ induces isomorphisms 
$$
\mathcal{RP}^1_\bullet(F)\otimes_{\mathbb{Z}}\mathbb{Z}[1/2]
\stackrel{\cong}{\longrightarrow} 
\mathcal{P}^1_\bullet(F)\otimes_{\mathbb{Z}}\mathbb{Z}[1/2].
$$
\end{proposition}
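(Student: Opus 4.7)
The plan is to invoke a Hochschild--Serre spectral sequence argument for the central extension
$$
1\to\{\pm I\}\to\op{SL}_2(F)\to\op{PSL}_2(F)\to 1.
$$
The hypothesis that $F$ is quadratically closed enters in exactly one place: the determinant map $\op{PGL}_2(F)\to F^\times/(F^\times)^2$ is identically zero, so the inclusion $\op{PSL}_2(F)\hookrightarrow\op{PGL}_2(F)$ is an equality. The natural map of the statement is therefore the canonical surjection $\op{SL}_2(F)\twoheadrightarrow\op{PGL}_2(F)$ with kernel $\{\pm I\}$ above. Beyond this, the only further fact needed is that the scalar matrix $-I$ acts trivially on $\mathbb{P}^1(F)$, hence acts trivially on the chain complex $M_\bullet:=C^{\op{alt}}_\bullet(F)/F_0C^{\op{alt}}_\bullet(F)$, so that $M_\bullet$ is naturally a complex of $\op{PGL}_2(F)$-modules whose restriction along the surjection is the complex used to define $\mathcal{RP}^1_\bullet(F)$.

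Now I apply the equivariant hyperhomology spectral sequence associated to the above extension with coefficients in the complex $M_\bullet\otimes_{\mathbb{Z}}\mathbb{Z}[1/2]$, obtained from filtering the relevant total complex by the Hochschild--Serre filtration:
$$
E^2_{p,q}=\op{H}_p\!\left(\op{PGL}_2(F),\op{H}_q(\{\pm I\},M_\bullet\otimes\mathbb{Z}[1/2])\right)\Rightarrow\op{H}_{p+q}(\op{SL}_2(F),M_\bullet\otimes\mathbb{Z}[1/2]).
$$
Since $|\{\pm I\}|=2$ is a unit in $\mathbb{Z}[1/2]$, the higher group homology of $\{\pm I\}$ with coefficients in any $\mathbb{Z}[1/2]$-module vanishes, so $E^2_{p,q}=0$ for $q>0$. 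Because $-I$ acts trivially on $M_\bullet$, the surviving row is $E^2_{p,0}=\op{H}_p(\op{PGL}_2(F),M_\bullet\otimes\mathbb{Z}[1/2])$. The spectral sequence therefore collapses and delivers an edge isomorphism
$$
\op{H}_\bullet(\op{SL}_2(F),M_\bullet\otimes\mathbb{Z}[1/2])\stackrel{\cong}{\longrightarrow}\op{H}_\bullet(\op{PGL}_2(F),M_\bullet\otimes\mathbb{Z}[1/2]),
$$
which by \prettyref{def:rp1} is precisely the claimed isomorphism $\mathcal{RP}^1_\bullet(F)\otimes\mathbb{Z}[1/2]\cong\mathcal{P}^1_\bullet(F)\otimes\mathbb{Z}[1/2]$.

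It remains to verify that this edge map is the one induced by the natural map $\op{SL}_2(F)\to\op{PGL}_2(F)$, which is a standard functoriality property of Hochschild--Serre when the fiber acts trivially on the coefficients. I do not expect any genuine obstacle; the only mild technical point is to handle the spectral sequence with chain-complex coefficients rather than a single module, which is routine (e.g.\ by regarding $M_\bullet$ as a $\op{SL}_2(F)$-equivariant object in the derived category and applying the usual double-complex bookkeeping, or by degeneration of the appropriate spectral sequence of the bicomplex computing equivariant hyperhomology).
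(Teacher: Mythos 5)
Your argument is correct and is essentially the paper's own proof: the paper likewise factors the map through $\op{PSL}_2(F)$, identifies $\op{PSL}_2(F)=\op{PGL}_2(F)$ using quadratic closedness, and disposes of the central kernel $\{\pm I\}$ (which acts trivially on the complex) by inverting $2$ — you have merely made the last step explicit via the Hochschild--Serre spectral sequence. No discrepancies to report.
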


\begin{proof}
The natural map factors as
$\op{SL}_2(F)\to\op{PSL}_2(F)\to\op{PGL}_2(F)$, and we will show that
any of these homomorphisms induces an isomorphism on equivariant
homology with coefficients in $C^{\op{alt}}_\bullet(F)$. 

If $F$ is quadratically closed, then $\op{PSL}_2(F)$ and
$\op{PGL}_2(F)$ are in fact the same groups, because any matrix
$M\in\op{PGL}_2(F)$ can be scaled, using the diagonal matrix
$\op{diag}(\det M^{\frac{1}{2}},\det M^{\frac{1}{2}})$, to a matrix in
$\op{PSL}_2(F)$. Therefore, the second homomorphism obviously induces
an isomorphism on homology. 

The homomorphism $\op{SL}_2(F)\to\op{PSL}_2(F)$ is
surjective and has kernel $\{\pm
I\}\cong\mathbb{Z}/2\mathbb{Z}$. Since the kernel acts in fact
trivially on the complexes $C^{\op{alt}}_\bullet(F)$, the projection 
hence induces an isomorphism on homology with
$\mathbb{Z}[1/2]$-coefficients. 
\end{proof}

\subsection{The hyperhomology spectral sequence}

The hyperhomology spectral sequence for the action of $\op{SL}_2(F)$
on the complexes $C^{(\op{alt})}_\bullet(F)$ provides some insights
into the structure of the homology of $\op{SL}_2(F)$, which we discuss
next. 

We first recall a suitable resolution of the complex
$F_0C^{\op{alt}}_\bullet(F)$ from \cite[Proposition 13.22]{dupont}. 
The complex $D_\bullet(F)$ is the following complex
$$
D_\bullet(F)=\bigoplus_{x\in\mathbb{P}^1(F)}\left(
\mathbb{Z}[\mathbb{P}^1(F)\setminus\{x\}]\to \mathbb{Z}\right),
$$
with the first term sitting in degree $1$ and the differential given
by the minus the augmentation $y\mapsto -1$. This is a shifted version
of the augmented complex, hence the additional sign.
The complex $E_\bullet(F)$ is defined to be 
$$
E_\bullet(F):=\bigoplus_{x,y\in\mathbb{P}^1(F),x\neq y}\mathbb{Z}
$$
concentrated in degree $1$. Note that the index set is the set of
unordered pairs of distinct elements in $\mathbb{P}^1(F)$. 

\begin{lemma}
\label{lem:dupontexact}
There is a  map of complexes $F_0C^{\op{alt}}_\bullet(F)\to
D_\bullet(F)$  given in degree $0$ by $(x)\mapsto 1_x$ and in degree
$1$ by $(x,y)\mapsto (y)_x-(x)_y$. 

There is a map of complexes $D_\bullet(F)\to E_\bullet(F)$ given in
degree $0$ by the $0$-map and in degree $1$ by $(x)_y\mapsto
1_{(x,y)}$. 

With these maps, there is an exact sequence of complexes
$$
0\to F_0C^{\op{alt}}_\bullet(F)\otimes_{\mathbb{Z}}\mathbb{Z}[1/2]\to
D_\bullet(F)\otimes_{\mathbb{Z}}\mathbb{Z}[1/2]\to
E_\bullet(F)\otimes_{\mathbb{Z}}\mathbb{Z}[1/2]\to 0.  
$$
\end{lemma}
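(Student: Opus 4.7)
The plan is to verify the chain map properties first, then exactness degree by degree, working with a convenient basis after inverting $2$.

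First I would check that the two maps are well defined chain maps. In $F_0C_\bullet^{\op{alt}}(F)$, the differential in degree $1$ sends $(x,y)$ to $(y)-(x)$, which under the proposed degree $0$ map goes to $1_y - 1_x$. On the other hand, the $D$-differential sends $(y)_x - (x)_y$ to $-1_x - (-1_y) = 1_y - 1_x$, so the square commutes. The degree $1$ map is well defined on the alternating generators because $(y,x) \mapsto (x)_y - (y)_x = -((y)_x - (x)_y)$; note that the diagonal symbols $(x,x)$, which are $2$-torsion in $C^{\op{alt}}_1$ and would produce the nonsensical $(x)_x$, are killed after inverting $2$. For $D_\bullet(F) \to E_\bullet(F)$ the chain map condition is trivial in both directions: the only nonzero differential on the $D$ side lands in $D_0$, which maps to $E_0 = 0$, so both compositions are zero.

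Next I would establish injectivity of $F_0C^{\op{alt}}_\bullet(F)\otimes\mathbb{Z}[1/2] \to D_\bullet(F)\otimes\mathbb{Z}[1/2]$. In degree $0$ it is obvious because different points hit different summands of $D_0 = \bigoplus_x \mathbb{Z}$. In degree $1$, choose a total order on $\mathbb{P}^1(F)$ and take $\{(x,y) : x < y\}$ as a $\mathbb{Z}[1/2]$-basis for $F_0 C^{\op{alt}}_1(F) \otimes \mathbb{Z}[1/2]$. Writing a general element as $\sum_{x<y} n_{x,y}(x,y)$, its image has coefficient $n_{x,y}$ at the basis vector $(y)_x$ of $D_1$, so the map is injective. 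Surjectivity of $D_\bullet(F) \to E_\bullet(F)$ is immediate: each generator $1_{\{x,y\}}$ of $E_1$ has preimage $(y)_x$, and $E_0 = 0$.

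The final and main point is exactness in the middle. In degree $0$ we need the image of $F_0 C_0^{\op{alt}}$ to equal $\ker(D_0 \to E_0) = D_0$, but this is clear since $\{1_x\}$ is a $\mathbb{Z}$-basis of $D_0$ and each $1_x$ is hit by $(x)$. In degree $1$, an element $\sum a_{x,y}(y)_x$ of $D_1$ lies in $\ker(D_1\to E_1)$ precisely when $a_{u,v} + a_{v,u} = 0$ for every unordered pair $\{u,v\}$, i.e.\ $a$ is antisymmetric. Setting $n_{x,y} = a_{x,y}$ for $x<y$, the preimage $\sum_{x<y} n_{x,y}(x,y)$ in $F_0 C^{\op{alt}}_1 \otimes \mathbb{Z}[1/2]$ maps exactly to $\sum a_{x,y}(y)_x$; conversely the image of the map into $D_1$ is visibly antisymmetric, so the two subgroups coincide.

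I do not expect any genuine obstacle here; the only subtlety is the treatment of the $2$-torsion classes $(x,x)$, which is precisely what forces the $\mathbb{Z}[1/2]$-localization so that both the well-definedness of the degree $1$ map $F_0 C^{\op{alt}}_\bullet \to D_\bullet$ and the clean antisymmetric-basis description used for exactness go through.
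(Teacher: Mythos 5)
Your proof is correct and follows essentially the same route as the paper's: verify the chain-map squares, observe that the only failure of injectivity in degree $1$ comes from the $2$-torsion classes $(x,x)$ which vanish after inverting $2$, and identify the kernel of $D_1\to E_1$ with the antisymmetric combinations, which is exactly the image of $F_0C^{\op{alt}}_1$. Your explicit choice of a total order and basis is a slightly more concrete packaging of the paper's multiplicity argument, but the content is identical.
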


\begin{proof}
The first claim is simply that 
$(x,y)\mapsto (y)_x-(x)_y\mapsto -1_x+1_y$ and $(x,y)\mapsto
(y)-(x)\mapsto 1_y-1_x$ are the same map. 

The second claim is trivially true since $E_\bullet(F)$ is
concentrated in degree $1$. 

We discuss injectivity of the first map: in degree $0$, the map is
$(x)\mapsto 1_x$ which is obviously injective. In degree $1$, the map
is $(x,y)\mapsto (y)_x-(x)_y$. If $x=y$, then the image is zero, but
the element $(x,x)=-(x,x)$ is $2$-torsion, hence also $0$. Injectivity
in degree $1$ follows from this.

Surjectivity of the second map is clear, since $1_{(x,y)}$ is in the
image of $(x)_y$. 

It remains to see exactness in the middle. In degree $0$, exactness in
the middle is clear: the last term is $0$ and the first map
$(x)\mapsto 1_x$ is an isomorphism. In degree $1$, the composition is
obviously $0$, since $(x,y)\mapsto 1_{(x,y)}-1_{(y,x)}$. For an
element $c$ in $D_\bullet(F)$, the $(x,y)$-component of its image in
$E_\bullet(F)$ is trivial if the elements $(x)_y$ and $(y)_x$ occur
with opposite multiplicities. This implies exactness in the
middle. 
\end{proof}

We now can identify the homology of the complex
$F_0C^{\op{alt}}_\bullet(F)$ with the homology of the normalizer,
as in \cite[Proposition 13.22]{dupont}. 

\begin{lemma}
\label{lem:identnorm}
Let $F$ be an infinite field. 
\begin{enumerate}
\item 
The complex $D_\bullet(F)$ is acyclic. 
\item
There is an induced isomorphism
$$
\op{H}_\bullet(\op{SL}_2(F),F_0C^{\op{alt}}(F)
\otimes_{\mathbb{Z}}\mathbb{Z}[1/2])\cong 
\op{H}_\bullet(\op{N}(F),\mathbb{Z}[1/2]),
$$
where $\op{N}(F)$ is the normalizer of a maximal torus in
$\op{SL}_2(F)$. 
\end{enumerate}
\end{lemma}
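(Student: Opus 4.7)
The plan is to compute the $\op{SL}_2(F)$-equivariant hyperhomology of $D_\bullet(F)$ via the standard two-column hyperhomology spectral sequence, show that it vanishes after inverting $2$ (which is the content of (1), interpreted as equivariant acyclicity), and then deduce (2) by passing to the long exact sequence associated with the short exact sequence of complexes in \prettyref{lem:dupontexact}.

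For (1), I would identify each term of $D_\bullet(F)$ as an induced $\op{SL}_2(F)$-module via Shapiro's lemma. The degree-zero part $D_0 = \bigoplus_{x \in \mathbb{P}^1(F)} \mathbb{Z} = \mathbb{Z}[\mathbb{P}^1(F)]$ is induced from the Borel subgroup $\op{B}(F) \subset \op{SL}_2(F)$, the stabilizer of $\infty$. The degree-one part $D_1 = \bigoplus_x \mathbb{Z}[\mathbb{P}^1(F)\setminus\{x\}]$ is freely generated as an abelian group by ordered pairs $(y,x)$ of distinct points of $\mathbb{P}^1(F)$; since $\op{SL}_2(F)$ acts transitively on such pairs with stabilizer of $(0, \infty)$ equal to the diagonal torus $\op{T}(F)$, this term is induced from $\op{T}(F)$. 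Shapiro's lemma then identifies the only two nonzero columns of the $E^1$-page as $E^1_{0,q} = \op{H}_q(\op{B}(F), \mathbb{Z}[1/2])$ and $E^1_{1,q} = \op{H}_q(\op{T}(F), \mathbb{Z}[1/2])$, with $d^1$-differential given (up to sign) by the map induced by the inclusion $\op{T}(F) \hookrightarrow \op{B}(F)$. By \cite[Theorem~4.6.2]{knudson:book}, this inclusion induces an isomorphism with $\mathbb{Z}[1/2]$-coefficients, so the spectral sequence collapses with $E^2 = 0$, giving the vanishing.

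For (2), applying $\op{SL}_2(F)$-equivariant hyperhomology to the short exact sequence of \prettyref{lem:dupontexact} yields a long exact sequence whose $D_\bullet$-term vanishes by (1). The connecting maps then provide isomorphisms
$$\mathbb{H}_{m+1}(\op{SL}_2(F), E_\bullet(F) \otimes \mathbb{Z}[1/2]) \cong \op{H}_m(\op{SL}_2(F), F_0 C^{\op{alt}}(F) \otimes \mathbb{Z}[1/2]).$$
Since $E_\bullet(F)$ is concentrated in degree $1$ with $E_1 = \mathbb{Z}[\text{unordered distinct pairs}]$ and $\op{SL}_2(F)$ acts transitively on unordered pairs of distinct points of $\mathbb{P}^1(F)$ with setwise stabilizer equal to the normalizer $\op{N}(F)$ of $\op{T}(F)$, a final application of Shapiro identifies the left-hand side with $\op{H}_m(\op{N}(F), \mathbb{Z}[1/2])$, completing the proof.

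The main obstacle lies in part (1), specifically in making precise the Shapiro identification of the $d^1$-differential with the inclusion-induced map $\op{H}_\bullet(\op{T}(F)) \to \op{H}_\bullet(\op{B}(F))$. This requires tracing through sign conventions and verifying that the $\op{SL}_2(F)$-equivariant map $D_1 \to D_0$ (which on an orbit representative sends the generator $(0)_\infty$ to $-1_\infty$) corresponds on stabilizers to the inclusion $\op{T}(F) \hookrightarrow \op{B}(F)$. Once this identification is in place, invoking Knudson's theorem is routine and the rest of the argument is formal.
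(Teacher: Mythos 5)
Your proposal is correct and takes essentially the same route as the paper's own proof: both identify $D_0$ and $D_1$ as modules induced from the Borel subgroup and the diagonal torus respectively, use Shapiro's lemma to reduce the two-column hyperhomology spectral sequence to the inclusion-induced map $\op{H}_\bullet(\op{T}(F))\to\op{H}_\bullet(\op{B}_2(F))$ (an isomorphism for infinite $F$), and then deduce (2) from the long exact sequence of \prettyref{lem:dupontexact} together with the Shapiro identification of $E_\bullet(F)$ as induced from the setwise stabilizer $\op{N}(F)$ of an unordered pair. The only cosmetic difference is that the paper carries out part (1) with integral coefficients, whereas you work with $\mathbb{Z}[1/2]$-coefficients throughout; this changes nothing.
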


\begin{proof}
(1) 
The degree $1$ part of $D_\bullet(F)$ is the $\op{SL}_2(F)$-module 
$$
\bigoplus_{x\in\mathbb{P}^1(F)}\mathbb{Z}[\mathbb{P}^1(F)\setminus\{x\}]
\cong
\op{Ind}^{\op{SL}_2(F)}_{\op{B}_2(F)}\mathbb{Z}[\mathbb{P}^1(F)\setminus\{\infty\}] 
\cong \op{Ind}^{\op{SL}_2(F)}_{\op{T}(F)}\mathbb{Z}.
$$
In particular, there is an induced isomorphism
$$
\op{H}_\bullet(\op{SL}_2(F),D_1(F))\cong
\op{H}_\bullet(\op{T}(F),\mathbb{Z}). 
$$
Similarly, we can identify the degree $0$ part of the complex as 
$$
\bigoplus_{x\in\mathbb{P}^1(F)}\mathbb{Z}\cong
\op{Ind}^{\op{SL}_2(F)}_{\op{B}_2(F)}\mathbb{Z},
$$
which yields an induced isomorphism 
$$
\op{H}_\bullet(\op{SL}_2(F),D_0(F))\cong
\op{H}_\bullet(\op{B}_2(F),\mathbb{Z}). 
$$
Since $F$ is infinite, we can identify the homology of the Borel
subgroup $\op{B}_2(F)$ with the homology of the maximal torus, i.e.,
the natural projection $\op{B}_2(F)\to\op{T}(F)$ induces an
isomorphism
$\op{H}_\bullet(\op{B}_2(F),\mathbb{Z})\cong\op{H}_\bullet(\op{T}(F),\mathbb{Z})$.
The differential
$d_1:\op{H}_\bullet(\op{T},\mathbb{Z})\to\op{H}_\bullet(\op{T},\mathbb{Z})$
is the identity, because the differential of the complex $d_1$ is
induced from the natural inclusion $\op{T}(F)\to\op{B}_2(F)$ on the
stabilizer groups. Therefore, the complex $D_\bullet(F)$ is acyclic. 

(2) 
The result will be proved using the long exact sequence of homology
groups associated to the exact sequence of complexes from 
\prettyref{lem:dupontexact}. From (1), it suffices to show that the
homology of $E_\bullet(F)$ is identified with a shift of the homology
of the normalizer. 

The action of $\op{SL}_2(F)$ on $E_1(F)=\bigoplus_{x\neq y}\mathbb{Z}$
is transitive, we can choose $\{0,\infty\}$ as orbit
representative. The stabilizer of the unordered pair $\{0,\infty\}$ is
the normalizer $\op{N}(k)$: in addition to the diagonal matrices
stabilizing both $0$ and $\infty$, the Weyl group generator 
$$
w=\left(\begin{array}{cc}
0&1\\-1&0\end{array}\right)
$$
stabilizes $\{0,\infty\}$ setwise. Therefore, there are induced
isomorphisms
$$
\op{H}_{i+1}(\op{SL}_2(F),E_\bullet(F))\cong
\op{H}_i(\op{N}(F),\mathbb{Z}).
$$
\end{proof}

From the complex $C^{\op{alt}}_\bullet(F)$, we now obtain the required
long exact sequence connecting homology of $\op{SL}_2(F)$ to the
homology of $\op{N}(F)$ and the refined scissors congruence groups
$\mathcal{RP}^1_\bullet(F)$. 

\begin{proposition}
\label{prop:rp1exact}
Let $F$ be an infinite field. 
\begin{enumerate}
\item There is a long exact sequence 
$$
\cdots\to \op{H}_\bullet(\op{N}(F),\mathbb{Z}[1/2])\to
\op{H}_\bullet(\op{SL}_2(F),\mathbb{Z}[1/2])\to
\mathcal{RP}^1_\bullet(F)\to \cdots 
$$
where $\op{N}(F)$ denotes the normalizer of a maximal torus in
$\op{SL}_2(F)$. 
\item There is a long exact sequence 
$$
\cdots\to \op{H}_\bullet(\op{N}'(F),\mathbb{Z}[1/2])\to
\op{H}_\bullet(\op{PGL}_2(F),\mathbb{Z}[1/2])\to
\mathcal{P}^1_\bullet(F)\to \cdots 
$$
where $\op{N}'(F)$ denotes the normalizer of a maximal torus in
$\op{PGL}_2(F)$. 
\item There are similar long exact sequences with
  $\mathbb{Z}/\ell$-coefficients, $\ell$ an  odd prime. 
\end{enumerate}
\end{proposition}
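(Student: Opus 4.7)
The plan is to derive all three sequences from the short exact sequence of $\op{SL}_2(F)$-chain complexes
$$0\to F_0C^{\op{alt}}_\bullet(F)\to C^{\op{alt}}_\bullet(F)\to C^{\op{alt}}_\bullet(F)/F_0C^{\op{alt}}_\bullet(F)\to 0,$$
tensored with $\mathbb{Z}[1/2]$. This is exact because $F_0C^{\op{alt}}_\bullet(F)$ is a subcomplex and all terms are degreewise free abelian. Passing to the associated long exact sequence in $\op{SL}_2(F)$-equivariant hyperhomology produces, for each degree, a three-term stretch that I identify using earlier results of the appendix: the middle term is $\op{H}_\bullet(\op{SL}_2(F),\mathbb{Z}[1/2])$ by the acyclicity statement \prettyref{cor:acyclic}, the left term is $\op{H}_\bullet(\op{N}(F),\mathbb{Z}[1/2])$ by \prettyref{lem:identnorm}, and the quotient term is $\mathcal{RP}^1_\bullet(F)$ by \prettyref{def:rp1}. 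Splicing these identifications into the long exact sequence yields part (1).

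Part (2) follows by the identical argument with $\op{SL}_2(F)$ replaced throughout by $\op{PGL}_2(F)$. The statement of \prettyref{cor:acyclic} already covers $\op{PGL}_2(F)$, and the proof of \prettyref{lem:identnorm} transfers with no essential change: for $F$ infinite, the stabilizer in $\op{PGL}_2(F)$ of a single point of $\mathbb{P}^1(F)$ is a Borel whose homology agrees with that of its maximal torus, and the stabilizer of the unordered pair $\{0,\infty\}$ is by definition the normalizer $\op{N}'(F)$. Hence the $D_\bullet\to E_\bullet$ resolution of \prettyref{lem:dupontexact} still computes the outer term as $\op{H}_\bullet(\op{N}'(F),\mathbb{Z}[1/2])$.

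For part (3), I would tensor the entire construction with $\mathbb{Z}/\ell$. Since $\ell$ is odd, $\mathbb{Z}[1/2]\otimes_{\mathbb{Z}}\mathbb{Z}/\ell\cong\mathbb{Z}/\ell$, and because the complexes $C^{\op{alt}}_\bullet(F)$, $F_0C^{\op{alt}}_\bullet(F)$, $D_\bullet(F)$, $E_\bullet(F)$ are degreewise free abelian, the base change preserves both the exactness of the short sequence of complexes and the quasi-isomorphisms underlying \prettyref{cor:acyclic} and \prettyref{lem:identnorm}. The only forward-looking issue is a bookkeeping one: verifying that the connecting morphisms of the long exact sequence agree with the natural map $\op{H}_\bullet(\op{N}(F))\to\op{H}_\bullet(\op{SL}_2(F))$ induced by inclusion and with the tautological projection to $\mathcal{RP}^1_\bullet(F)$. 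This is automatic because the identifications of \prettyref{cor:acyclic} and \prettyref{lem:identnorm} are themselves induced by explicit equivariant chain maps, so naturality in the snake lemma forces the connecting maps to take the expected form.
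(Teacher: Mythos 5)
Your proposal is correct and follows essentially the same route as the paper: take the short exact sequence of complexes $0\to F_0C^{\op{alt}}_\bullet(F)\to C^{\op{alt}}_\bullet(F)\to C^{\op{alt}}_\bullet(F)/F_0C^{\op{alt}}_\bullet(F)\to 0$, pass to the long exact sequence in equivariant homology, and identify the three terms via \prettyref{lem:identnorm}, \prettyref{cor:acyclic} and \prettyref{def:rp1}. Your extra remarks on the $\op{PGL}_2$ case and on base change to $\mathbb{Z}/\ell$ only make explicit what the paper leaves implicit.
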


\begin{proof}
Consider the exact sequence of complexes
$$
0\to F_0C^{\op{alt}}_{\bullet}(F)\to C^{\op{alt}}_\bullet(F)\to
C^{\op{alt}}_\bullet(F)/F_0C^{\op{alt}}_\bullet(F)\to 0. 
$$

The equivariant homology of $F_0C^{\op{alt}}_\bullet(F)$ is identified
with the homology of the normalizer by \prettyref{lem:identnorm}. 
The equivariant homology of $C^{\op{alt}}_\bullet(F)$ is identified
with the homology of the respective group $\op{SL}_2(F)$ or
$\op{PGL}_2(F)$ by \prettyref{cor:acyclic}.
The identification of equivariant homology of the quotient
$C^{\op{alt}}_\bullet(F)/F_0C^{\op{alt}}_\bullet(F)$ with refined
scissors congruence groups is built into \prettyref{def:rp1}. 
\end{proof}

\begin{remark}
Note that the groups $\mathcal{RP}^1_\bullet(F;\mathbb{Z}/\ell)$ with
finite coefficients are not simply obtained by tensoring
$\mathcal{RP}^1_\bullet(F)\otimes_{\mathbb{Z}}\mathbb{Z}/\ell$, but
instead by tensoring the defining complex $C^{\op{alt}}_\bullet(F)$
with $\mathbb{Z}/\ell$ and then computing equivariant homology. 
\end{remark}

We make some remarks on the relation between the spectral sequences
associated to $C_\bullet(F)$ and $C^{\op{alt}}_\bullet(F)$. Note first
that there is a natural map $C_\bullet(F)\to
C^{\op{alt}}_\bullet(F)$ given by the identity on tuples of pairwise
distinct points. Both complexes are acyclic, hence their
$\op{SL}_2(F)$-equivariant homology can be identified with group
homology of $\op{SL}_2(F)$ with constant coefficients. The map between
the two complexes then provides a map between spectral spectral
sequences converging to
$\op{H}_\bullet(\op{SL}_2(F),\mathbb{Z}[1/2])$. The spectral sequence
associated to $C_\bullet(F)$ is the one usually discussed, see for
example \cite[Theorem 3.2.2]{knudson:book}, \cite[Theorem
8.19]{dupont} or \cite{hutchinson:bw}. The spectral sequence for
$C^{\op{alt}}_\bullet(F)$ as discussed above seems to appear only in
  \cite[Chapter 15]{dupont}.

Indexing the $E^1$-term as $E^1_{p,q}=\op{H}_p(\op{SL}_2(F),C_q)$ and
working as always with $\mathbb{Z}[1/2]$-coefficients, the 
$E^1$-terms of both spectral sequences are both concentrated in the
$p=0$ column as well as the two lines $q=0$ and $q=1$. From the
computations in \cite[Section 4]{hutchinson:bw}, we see that for the
complex $C_\bullet(F)$, we have 
$$
E^2_{p,0}\cong \op{H}_\bullet(\op{N}(F),\mathbb{Z}[1/2]),
$$
using that with $\mathbb{Z}[1/2]$-coefficients, the Hochschild-Serre
spectral sequence associated to the extension
$1\to\op{T}(F)\to\op{N}(F)\to\mathbb{Z}/2\to 0$ degenerates to an
isomorphism 
$$
\op{H}_\bullet(\op{N}(F),\mathbb{Z}[1/2])\cong
\op{H}_0(\mathbb{Z}/2,\op{H}_\bullet(\op{T}(F),\mathbb{Z}[1/2])). 
$$
We also saw above that for the complex $C^{\op{alt}}(F)$, we have 
$$
E^{2,\op{alt}}_{p,0}\cong\op{H}_\bullet(\op{N}(F),\mathbb{Z}[1/2]). 
$$
But from \prettyref{lem:identnorm}, it also follows that  the line
$E^{2,\op{alt}}_{p,1}$ is in fact trivial. Although the spectral
sequence for the complex $C^{\op{alt}}_\bullet(F)$ does not degenerate
at the $E^2$-term, its differentials are the maps
$\mathcal{RP}^1_q(F)\to\op{H}_{q-1}(\op{N}(F),\mathbb{Z}[1/2])$, and
all information is contained in the long exact sequence of
\prettyref{prop:rp1exact}. 

We see, using the comparison map $C_\bullet(F)\to
C^{\op{alt}}_\bullet(F)$, that the differentials
$d^{q-1}:E^{q-1}_{0,q}\to E^{q-1}_{q-2,1}$ are surjective (with
$\mathbb{Z}[1/2]$-coefficients). In the $E^\infty$-term, we then find
$E^\infty_{q,1}\cong E^{\infty,\op{alt}}_{q,1}\cong 0$. Moreover, we
can identify the $\mathcal{RP}^1_q(F)$ groups as the entries
$E^q_{0,q}$. The connecting map
$\mathcal{RP}^1_q(F)\to\op{H}_{q-1}(\op{N}(F),\mathbb{Z}[1/2])$ is the
differential $d^q:E^q_{0,q}\to E^q_{q-1,0}$. In particular, the groups
$\mathcal{RP}^1_q(F)$ can be identified (in the spectral sequence for
$C_\bullet(F)$) with the kernel of the two differentials
$d^1:E^1_{0,q}\to E^1_{0,q-1}$ and $d^{q-1}:E^{q-1}_{0,q}\to
E^{q-1}_{q-2,1}$. 

\subsection{Low-dimensional computations}

We note some further well-known statements on the (refined) scissors
congruence groups, cf. \cite{dupont} and
\cite{hutchinson:bw}.  

\begin{proposition}
Let $F$ be an infinite field. 
\begin{enumerate}
\item $\mathcal{RP}^1_q(F)=0$ for $q\leq 1$. 
\item $\mathcal{P}^1_q(F)=0$ for $q\leq 2$.
\item $\mathcal{RP}^1_2(F)=\op{I}(F)$, where $I$ denotes the
  fundamental ideal in the Witt ring of $F$. 
\end{enumerate}
\end{proposition}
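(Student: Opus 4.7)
The plan is to derive all three statements from the long exact sequence of \prettyref{prop:rp1exact} together with Hochschild--Serre computations for the normalizer and classical identifications of the low-degree homology of $\op{SL}_2(F)$ and $\op{PGL}_2(F)$, working throughout with $\mathbb{Z}[1/2]$-coefficients as in that proposition.

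First I would dispose of the purely combinatorial vanishing statements, namely part (1) and the $q \leq 1$ portion of (2). By construction every generator $(x_0,\ldots,x_q)$ of $C^{\op{alt}}_q(F)$ with $q \leq 1$ has support of size at most $2$, so $F_0 C^{\op{alt}}_q = C^{\op{alt}}_q$ in those degrees. Hence the quotient complex $C^{\op{alt}}_\bullet(F)/F_0 C^{\op{alt}}_\bullet(F)$ is concentrated in degrees $\geq 2$, so its equivariant hyperhomology vanishes in degrees $\leq 1$ for any group action, giving $\mathcal{RP}^1_q(F) = 0$ and $\mathcal{P}^1_q(F) = 0$ for $q \leq 1$ at once.

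For the remaining case $\mathcal{P}^1_2(F) = 0$ in (2), I would extract from \prettyref{prop:rp1exact} the exact sequence
$$H_2(\op{N}'(F), \mathbb{Z}[1/2]) \to H_2(\op{PGL}_2(F), \mathbb{Z}[1/2]) \to \mathcal{P}^1_2(F) \to H_1(\op{N}'(F), \mathbb{Z}[1/2]) \to H_1(\op{PGL}_2(F), \mathbb{Z}[1/2])$$
and show that the left arrow is surjective and the right arrow is injective. A Hochschild--Serre spectral sequence for $1 \to F^\times \to \op{N}'(F) \to \mathbb{Z}/2 \to 1$ collapses with $\mathbb{Z}[1/2]$-coefficients to the coinvariants of the inversion action on $H_q(F^\times, \mathbb{Z}[1/2]) \cong \wedge^q F^\times \otimes \mathbb{Z}[1/2]$; since inversion acts by $(-1)^q$ on the exterior algebra, we get $H_1(\op{N}'(F), \mathbb{Z}[1/2]) = 0$ and $H_2(\op{N}'(F), \mathbb{Z}[1/2]) = \wedge^2 F^\times \otimes \mathbb{Z}[1/2]$. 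The abelianization of $\op{PGL}_2(F)$ is $F^\times/(F^\times)^2$ which is $2$-torsion, so $H_1(\op{PGL}_2(F), \mathbb{Z}[1/2]) = 0$; and Matsumoto's theorem applied to $\op{PSL}_2 \subset \op{PGL}_2$ combined with perfectness of $\op{PSL}_2(F)$ identifies $H_2(\op{PGL}_2(F), \mathbb{Z}[1/2])$ as a quotient of $\wedge^2 F^\times \otimes \mathbb{Z}[1/2]$ via the natural symbolic map, yielding the required surjectivity.

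For (3), applying \prettyref{prop:rp1exact} to $\op{SL}_2$ and using $H_1(\op{N}(F), \mathbb{Z}[1/2]) = 0$ (same Hochschild--Serre argument) together with $H_1(\op{SL}_2(F), \mathbb{Z}[1/2]) = 0$ (perfectness for infinite $F$) reduces the statement to identifying the cokernel of the natural map $H_2(\op{N}(F), \mathbb{Z}[1/2]) = \wedge^2 F^\times \otimes \mathbb{Z}[1/2] \to H_2(\op{SL}_2(F), \mathbb{Z}[1/2])$ with $I(F)$. This is where the substance lies: the identification of $H_2(\op{SL}_2(F), \mathbb{Z}[1/2])$ in terms of Milnor--Witt $K$-theory of $F$, and the fact that the image of $\wedge^2 F^\times$ corresponds under this identification to the Milnor $K$-theory summand so that the cokernel picks out exactly the Witt-theoretic contribution $I(F)$, is the content of Hutchinson's computations in \cite{hutchinson:rb, hutchinson:bw}, which I would cite as a black box. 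The exactness bookkeeping around the long exact sequence is formal once that identification is in hand; the Hutchinson computation is the main obstacle and the deepest input in the proof.
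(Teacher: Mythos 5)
Parts (1) and the $q\leq 1$ portion of (2) are correct and coincide with the paper's argument: the quotient complex $C^{\op{alt}}_\bullet(F)/F_0C^{\op{alt}}_\bullet(F)$ vanishes in degrees $\leq 1$ by the very definition of $F_0$, so its equivariant hyperhomology vanishes there. Your reduction of (3) to the identification of $\op{coker}\bigl(\wedge^2F^\times\otimes\mathbb{Z}[1/2]\to \op{H}_2(\op{SL}_2(F),\mathbb{Z}[1/2])\bigr)$ with $I(F)$ is also sound (given your correct computation $\op{H}_1(\op{N}(F),\mathbb{Z}[1/2])=0$), and outsourcing that identification to Hutchinson is in the same spirit as the paper, which simply cites \cite[Section 4.5]{hutchinson:bw} --- though Hutchinson computes the orbit space of triples directly rather than passing through Milnor--Witt $K$-theory, so either way the weight rests on the citation.

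The genuine gap is in your proof that $\mathcal{P}^1_2(F)=0$. Granting $\op{H}_1(\op{N}'(F),\mathbb{Z}[1/2])=0$, the long exact sequence reduces the claim to surjectivity of $\op{H}_2(\op{N}'(F),\mathbb{Z}[1/2])\to\op{H}_2(\op{PGL}_2(F),\mathbb{Z}[1/2])$; but that surjectivity is essentially equivalent to the statement being proved, and the justification offered does not establish it. Matsumoto's theorem presents $\op{H}_2$ of the \emph{simply connected} group by Steinberg symbols $\{a,b\}$, and these symbols are \emph{not} the images of $a\wedge b$ under the map induced by the inclusion of a maximal torus: they fail to be bilinear in $a$ and $b$, whereas every class coming from $\op{H}_2(F^\times)=\wedge^2F^\times$ is. If ``generated by symbols'' implied ``in the image of $\wedge^2F^\times$'', the identical argument applied to $\op{SL}_2$ would yield $\mathcal{RP}^1_2(F)=0$, contradicting part (3) (take $F=\mathbb{R}$, where $I(F)\otimes\mathbb{Z}[1/2]\neq 0$). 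So the one step where $\op{PGL}_2$ genuinely differs from $\op{SL}_2$ is exactly the step left unproved. The paper sidesteps this entirely by a chain-level computation: $\op{PGL}_2(F)$ acts sharply $3$-transitively on $\mathbb{P}^1(F)$, so the coinvariants of $C^{\op{alt}}_2(F)/F_0C^{\op{alt}}_2(F)$ form a single copy of $\mathbb{Z}$ generated by $(0,1,\infty)$, and this class dies after inverting $2$ (the setwise stabilizer of $\{0,1,\infty\}$ realizes odd permutations, which act by $-1$ through the alternating relation; the paper phrases the killing via the differential from degree $3$). Since $\mathcal{P}^1_2(F)$ is a subquotient of these coinvariants, it vanishes --- with no input whatsoever about $\op{H}_2(\op{PGL}_2(F))$. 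To salvage your route you would need an independent computation showing $\op{H}_2(\op{PGL}_2(F),\mathbb{Z}[1/2])$ is generated by the image of the torus, which is not what Matsumoto's theorem says.
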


\begin{proof}
By definition $\mathcal{RP}^1_q(F)$ is the $\op{SL}_2(F)$-equivariant
homology of the quotient
$C^{\op{alt}}_\bullet(F)/F_0C^{\op{alt}}_\bullet(F)$. Since
$\op{SL}_2(F)$ acts transitively on the set of pairs of distinct
points in $\mathbb{P}^1(F)$, the complex $C^{\op{alt}}_\bullet(F)$ has
one copy of $\mathbb{Z}$ in degree $0$ and one copy of $\mathbb{Z}$ in
degree $1$. But these are both degenerate configurations, so they are
contained in the subcomplex $F_0C^{\op{alt}}_\bullet(F)$. This proves
the first claim (and part of the second claim). 

(2) follows similarly, using that the action of
$\op{PGL}_2(F)$ on $3$ distinct points in $\mathbb{P}^1(F)$ is also
transitive. The result is a single copy of $\mathbb{Z}$ in degree $2$
of the complex $\op{PGL}_2(F)\backslash C^{\op{alt}}_\bullet(F)$,
which is killed by the differential from degree $3$. 

(3) is a reformulation of the computations in \cite[Section
4.5]{hutchinson:bw}. 
\end{proof}

We want to note that the low-degree part of the long exact sequence
from \prettyref{prop:rp1exact} is the Bloch-Wigner sequence proved in
\cite{hutchinson:bw}. In the notation of this appendix, it reads (all
groups with $\mathbb{Z}[1/2]$-coefficients):
$$
0\to \op{H}_3(\op{N}(F))\to \op{H}_3(\op{SL}_2(F))\to
\mathcal{RP}^1_3(F)\to \op{H}_2(\op{N}(F))\to
\op{H}_2(\op{SL}_2(F))\to 0.
$$

There would be a lot more to say about the refined scissors congruence
groups. We only make two final remarks. The relation of
$\op{H}_3(\op{SL}_2(\mathbb{C}))$ to actual scissors congruences in
$S^3$ and $\mathbb{H}^3$ is the subject of the book \cite{dupont}. The
Friedlander-Milnor conjecture is equivalent to unique divisibility of
the scissors congruence groups $\mathcal{P}^1_\bullet(F)$. This is
known so far only for $\mathcal{P}^1_3(F)$ by the work of Suslin,
Dupont and Sah.


\begin{thebibliography}{Hut11b}

\bibitem[AB08]{abramenko:brown}
P. Abramenko and K.S. Brown.  Buildings. Graduate Texts in Mathematics
248. Springer, 2008. 

\bibitem[Ati56]{atiyah:ks}
M.F. Atiyah. On the Krull-Schmidt theorem with applications to
sheaves. Bull. Soc. Math. France 84 (1956), 307--317.

\bibitem[Ati57]{atiyah:connection}
M.F. Atiyah. Complex analytic connections in fibre
bundles. Trans. Amer. Math. Soc. 85 (1957), no. 1, 181--207.

\bibitem[Bro94]{brown:book}
K.S. Brown. Cohomology of groups. Corrected reprint of the 1982
original. Graduate Texts in Mathematics, 87. Springer, 1994. 

\bibitem[Dup01]{dupont}
J.L. Dupont. Scissors congruences, group homology and characteristic
classes. Nankai Tracts in Mathematics 1. World Scientific, 2001.

\bibitem[FM84]{friedlander:mislin}
E.M. Friedlander and G. Mislin. Cohomology of classifying spaces of
complex Lie groups and related discrete
groups. Comment. Math. Helv. 59 (1984), 347--361.

\bibitem[Har77]{hartshorne}
R. Hartshorne. Algebraic geometry. Graduate Texts in Mathematics
52. Springer, 1977.

\bibitem[Hut11a]{hutchinson:rb}
K. Hutchinson. A refined Bloch group and the third homology of $SL_2$
of a field. J. Pure Appl. Algebra 217 (2013), 2003--2035.

\bibitem[Hut11b]{hutchinson:bw}
K. Hutchinson. A Bloch-Wigner complex for $SL_2$. 
J. K-theory 12 (2013), no. 1, 15--68.

\bibitem[Knu01]{knudson:book}
K.P. Knudson. Homology of linear groups. 
Progress in Mathematics, 193. Birkhäuser Verlag, Basel, 2001.

\bibitem[Mis94]{mislin:tate}
G. Mislin. Tate cohomology for arbitrary groups via
satellites. Topology Appl. 56 (1994), no. 3, 293--300. 

\bibitem[Rah13]{rahm:transactions}
A.D. Rahm. The homological torsion of $\op{PSL}_2$ of the imaginary
quadratic integers. Trans. Amer. Math. Soc. 365 (2013), no. 3,
1603--1635. 

\bibitem[Rah14]{rahm:jalg}
A.D. Rahm. Accessing the cohomology of discrete groups above their
virtual cohomological dimension. J. Alg. 404 (2014), no. C, 152--175. 

\bibitem[Ros73]{rosen:sunits}
M. Rosen. $S$-units and $S$-class group in algebraic function
fields. J. Algebra 26 (1973), 98--108.

\bibitem[Ser80]{serre:book}
J.-P. Serre. Trees. Springer, 1980.

\bibitem[Stu76]{stuhler:76}
U. Stuhler. Zur Frage der endlichen Pr{\"a}sentierbarkeit gewisser
arithmetischer Gruppen im Funktionenk{\"o}rperfall. 
Math. Ann. 224 (1976), no. 3, 217--232. 

\bibitem[Stu80]{stuhler:80}
U. Stuhler. Homological properties of certain arithmetic groups in the
function field case. Invent. Math. 57 (1980), no. 3, 263--281.

\bibitem[Sus90]{suslin:k3}
A.A. Suslin. $K_3$ of a field and the Bloch group. Trudy Mat. Inst. Steklov., 
183:190-199, 229, 1990. Translated in Proc. Steklov Inst. Math. 1991, no. 4,
217-239, Galois Theory, rings, algebraic groups and their applications (Russian)

\end{thebibliography}
\end{document}